\newcounter{TmpEnumi}
\begin{document}
\newtheorem{theo}{Theorem}[section]
\newtheorem{prop}[theo]{Proposition}
\newtheorem{lemma}[theo]{Lemma}
\newtheorem{exam}[theo]{Example}
\newtheorem{coro}[theo]{Corollary}
\theoremstyle{definition}
\newtheorem{defi}[theo]{Definition}
\newtheorem{rem}[theo]{Remark}


\newcommand{\Ab}{{\bf A}}
\newcommand{\Bb}{{\bf B}}
\newcommand{\Cb}{{\bf C}}
\newcommand{\Eb}{{\bf E}}
\newcommand{\Fb}{{\bf F}}
\newcommand{\Kb}{{\bf K}}
\newcommand{\Mb}{{\bf M}}
\newcommand{\Nb}{{\bf N}}
\newcommand{\Pb}{{\bf P}}
\newcommand{\Qb}{{\bf Q}}
\newcommand{\Rb}{{\bf R}}
\newcommand{\Sb}{{\bf S}}
\newcommand{\Tb}{{\bf T}}
\newcommand{\Ub}{{\bf U}}
\newcommand{\Vb}{{\bf V}}
\newcommand{\Xb}{{\bf X}}
\newcommand{\Yb}{{\bf Y}}
\newcommand{\Zb}{{\bf Z}}
\newcommand{\Ac}{{\mathcal A}}
\newcommand{\Bc}{{\mathcal B}}
\newcommand{\Cc}{{\mathcal C}}
\newcommand{\Dc}{{\mathcal D}}
\newcommand{\Fc}{{\mathcal F}}
\newcommand{\Ic}{{\mathcal I}}
\newcommand{\Jc}{{\mathcal J}}
\newcommand{\Lc}{{\mathcal L}}
\newcommand{\MM}{{\mathcal M}}
\newcommand{\Oc}{{\mathcal O}}
\newcommand{\Pc}{{\mathcal P}}
\newcommand{\Sc}{{\mathcal S}}
\newcommand{\Tc}{{\mathcal T}}
\newcommand{\Uc}{{\mathcal U}}
\newcommand{\Vc}{{\mathcal V}}

\newcommand{\db}{{\bf d}}
\newcommand{\fb}{{\bf f}}
\newcommand{\gb}{{\bf g}}

\newcommand{\ax}{{\rm ax}}
\newcommand{\Acc}{{\rm Acc}}
\newcommand{\Act}{{\rm Act}}
\newcommand{\ded}{{\rm ded}}
\newcommand{\Gm}{{$\Gamma_0$}}
\newcommand{\ID}{{${\rm ID}_1^i(\Oc)$}}
\newcommand{\PAP}{{${\rm PA}(P)$}}
\newcommand{\ACA}{{${\rm ACA}^i$}}
\newcommand{\RefP}{{${\rm Ref}^*({\rm PA}(P))$}}
\newcommand{\RefS}{{${\rm Ref}^*({\rm S}(P))$}}
\newcommand{\Rfn}{{\rm Rfn}}
\newcommand{\tar}{{\rm Tarski}}
\newcommand{\UNFA}{{${\mathcal U}({\rm NFA})$}}

\author{Nik Weaver}

\title [Axiomatizing mathematical conceptualism]
       {Axiomatizing mathematical conceptualism in third order arithmetic}

\address {Department of Mathematics\\
          Washington University in Saint Louis\\
          Saint Louis, MO 63130}

\email {nweaver@math.wustl.edu}

\date{\em May 23, 2007}

\begin{abstract}
We review the philosophical framework of mathematical conceptualism
as an alternative to set-theoretic foundations and show how mainstream
mathematics can be developed on this basis. The paper includes an
explicit axiomatization of the basic principles of conceptualism
in a formal system CM set in the language of third order arithmetic.
\end{abstract}

\maketitle


This paper is part of a project whose goal is to make a case
that mathematics should be disassociated from set theory.
The reasons for wanting to do this, which I discuss in greater
detail elsewhere (\cite{W4}; see also \cite{W1} and \cite{W5}),
involve both the philosophical unsoundness of set theory
and its practical irrelevance to mainstream mathematics.

Set theory is based on the reification of a collection as a
{\it separate object}, an elementary philosophical
error. Not only is this error obvious, it also has
the spectacular consequence of immediately giving rise to the
classical set theoretic paradoxes. Of course, these paradoxes
are not derivable in the standard axiomatizations of set theory,
but that is only because these systems were specifically designed
to avoid them. In these systems
the paradoxes are blocked by means of ad hoc
restrictions on the set concept that have no obvious intuitive
justification, which has led to the development of a large literature of
attempted rationalizations (e.g., \cite{Boo, Chi, Fie, Jan, Lew, Mad1,
Mad2, Par, Sho}). The heterogeneity of these efforts attests to the
difficulty of this task. For example, from a platonistic perspective
it seems impossible to give a cogent, principled explanation of
why it should be legal to form power sets of infinite sets, given that
unrestricted comprehension (forming the set of all $x$ such that
$P(x)$) is not supposed to be
valid in general. Antiplatonistic attempts to justify
set theory, on the other hand, appear doomed from the start because of
the massive gap in consistency strength
between straightforwardly antiplatonistically justifiable
systems like Peano arithmetic and, say, Zermelo-Frankel set theory.
The fact that modern mathematics apparently rests on this kind of basis
must be considered a major embarassment for the subject.

Probably the real appeal of set theory comes not from any murky
philosophical defense, but rather from the role it plays as the
standard foundation for mathematics. However, its concordance
with normal mathematical practice is actually quite poor. Cantorian
set theory postulates a vast universe of sets containing
remote cardinals which bear no relation to the relatively
concrete world of ordinary mathematics, where most objects of central
interest are essentially countable (i.e., separable for some natural
topology). Similarly,
set theory as a mathematical discipline is quite isolated from the
rest of mathematics, and it could hardly be otherwise given the gap
between its subject matter and the subject matter of normal mathematics.

One might still claim that even if set theory does not fit
mainstream mathematics very well, it nonetheless does so better than
any foundational alternative. The main purpose of the present paper is
to show that this is false, by explaining how ordinary mathematics can
be developed in a concrete way that avoids the metaphysical extravagance
and nonseparable pathology of Cantorian set theory. The general
point is not new: many authors have observed that large amounts of
mainstream mathematics can be developed in surprisingly weak systems.
I have already done something like this myself in \cite{W2}. (Also
see the introduction to \cite{W2} for other references, and particularly
see \cite{Sim}, which contains a very thorough development that is relatively
close to what we do here.) Actually, for a classically trained reader
\cite{W2} may be easier to read than the present paper, because the
approach taken there was modeled on the usual set-theoretic development
of mathematics (in particular, the formal language used there was the
usual language of set theory); here the goal is not to mimic set-theoretic
mathematical foundations, but rather to find an approach derived
more directly from an alternative philosophical basis.

The philosophical approach we adopt, mathematical conceptualism, is a
refinement of the {\it predicativist} philosophy of Poincar\'e and Russell.
The basic idea is that we accept as legitimate only those structures that
can be constructed, but we allow constructions of transfinite length.
What makes this ``conceptual'' \cite{W1, W5} is that we are concerned
not only with those constructions that we can actually physically carry
out, but more broadly with all those that are conceivable (perhaps
supposing our universe had different properties than it does). The admission
of transfinite processes takes us well beyond intuitionism, which only
allows finite constructions, but at the same time our insistence on
having some degree of constructivity is far more restrictive than
full-blooded platonism. The result is a foundational stance that
matches actual mathematical practice much better than either of
these two alternatives.

At the level of countable structures there is little practical difference
between conceptualism and platonism. However, uncountable structures in
general can be only partially realized in the conceptualist framework.
For example, although we can (transfinitely) construct individual real numbers
(regarded, say, as Dedekind cuts), we have no clear picture of a
transfinite construction that would succeed in producing the entire real
line. Thus we might say that the real line exists conceptualistically
only in an unfinished state.

(A platonist might counter that the well-ordering theorem does give him
a picture of how the real line could be sequentially constructed one
element at a time. This is not a good argument because it is the set
theoretic axioms --- specifically, the power set axiom --- on the
basis of which the well-ordering theorem is proven that are in question
here. If we take conceivability as a first principle then uncountable
constructions in general become highly dubious; see Section 1.2.)

We regard the idea of a completed surveyable
real line in roughly the same way that we regard naive infinitesimals, as
an evocative idealization that does not really have a definite meaning.
Admittedly, this is at odds with normal mathematics, which does treat
the real line as a completed and in some sense surveyable structure.
However, it does not seem
that this assumption is actually used in any serious way in
mainstream mathematics. The standard developments of all
mainstream subjects can be executed perfectly well in a conceptualistic
setting which treats the real line and other structures at a similar
level of complexity as only incompletely realizable.

The unfinished nature of the real line introduces logical subtleties
which we handle by adopting intuitionistic logic when quantifying over
all real numbers. This is one of the principal differences between
conceptualism and earlier versions of predicativism, where classical
logic was used almost exclusively and there was persistent confusion
about the legitimacy of second order quantification (e.g., \cite{Fef}).
Alternatively, one could avoid the use of intuitionistic logic by
arresting the construction of the mathematical universe at some
natural point and reasoning classically about the resulting
fixed partial universe; this was the approach adopted in \cite{W2}.

In the present paper we also go further and allow some reasoning
about arbitrary sets of real numbers, although this requires even
greater care. This represents a change from the point of view expressed
in Section 2.5 of \cite{W3}, where I would have rejected any reference
to arbitrary sets of real numbers. (However, I stand on the main
point of that discussion, that self-applicative schematic predicates are
prima facie predicatively invalid.) I now believe that reasoning at this
level of abstraction may be legitimate provided an even weaker logical
apparatus, the minimal logic of Johansson, is used. The justification
for this conclusion is explained in \cite{W5}. It is perhaps not
crucial here because we are going to work with a restricted notion
of sets of real numbers to which ordinary intuitionistic logic does
apply.

We will present a formal system CM for conceptualist mathematics
and outline how core mathematics can be developed within this system.
The claim is that virtually all mainstream mathematics can be
straightforwardly realized in CM.

Our system CM is similar to systems in \cite{Sim}, and there is a
strong resemblance between our development and that in \cite{Sim}
(though the similarity to \cite{W2} seems greater).
The main differences lie in our use of third order
variables, which simplifies the presentation in some ways, and our use
of non-classical logic, which is an important theoretical distinction
but has surprisingly little practical effect. Most assertions of
interest in mainstream subjects can be reduced to questions
involving quantification only over countable sets, at which point
classical logic can be used.

\section{Philosophical motivation}

\subsection{The concept of a set}
Sets are typically defined as ``collections of objects'', and,
crucially, these collections are themselves supposed to be objects
capable of belonging to other sets. This seems to be a simple grammatical
confusion. If we can talk sensibly about the set of all books in
the Library of Congress, then ``the set of all books in the Library
of Congress'' must be a particular {\it thing}, the reasoning apparently
goes; it is not a physical object, so it must be a non-physical
object. But we can also talk sensibly about the average taxpayer;
should we infer that this is an actual (albeit non-physical) person?
Is it really  coherent to maintain that {\it the set of all taxpayers}
is a genuine ``abstract object'' while conceding that {\it the average
taxpayer} is just a figure of speech?

Despite its nonsensical official rationale, in sufficiently concrete
settings the apparatus of set theory can be straightforwardly justified.
For example, if we want to talk about sets of natural numbers as if they
were actual objects, we can refer instead to infinite sequences of 0's
and 1's. Philosophical questions can still be raised about the general
concept of an infinite sequence of 0's and 1's, but these are
questions about the notion of infinity which could be brought
against any interpretation of mathematics. The point is that
we are no longer postulating the existence of fictional entities
based on a grammatical confusion.

In other words, set theory can be legitimized to the extent that we
are able to set up a system of {\it token entities} which can play the
role of sets.  And it is clear that most of our intuition about sets
comes from such token structures. When I think of a set, it is not
the set itself that I picture --- how could I, when this is supposed
to be an abstract object that has no visual aspect --- but rather
some structural representation. Perhaps this explains G\"odel's famous
comment that we have ``something like a perception'' of sets
(\cite{God}, p.\ 484). We do not have any perception of sets, but
we do have a conception, perhaps something like a perception, of
structures which can play the role of sets.

The philosophical literature on set theory tends to blur this distinction.
In particular, it is highly ambivalent as to whether sets can actually be
{\it formed} or {\it manipulated}
in any sense, or whether they are really supposed to be
absolutely inert abstractions. The latter is the official platonist position,
but language suggesting the former is ubiquitous. This is particularly
seen in the most popular explanation of the paradoxes of naive set theory,
which invokes an ``iterative conception'' of sets according to which sets
are to be thought of as being iteratively constructed in stages.
Of course, this makes no sense if they are simultaneously thought of as
being inert abstractions.

(Most authors who write about the iterative conception are clearly aware
of this difficulty. This sometimes results in strange comments to the effect
that the informal explanation of the iterative conception in quasi-constructive
terms is not supposed to be taken
literally, which obviously begs the questions of how, then, one is
supposed to take it, and what meaning the iterative conception can have if
it cannot be explained in a way that makes literal sense.)

In practice, mathematical reasoning about sets consists largely of
imagined quasi-physical manipulations that could hardly apply to
causally inert metaphysical entities. We order sets, we remove elements
from them, we form products, we cut and paste.
Practically anything we would call a
mathematical construction makes sense only as applied to quasi-physical
structures, not abstract sets. This suggests that a rational approach to
set theory would drop the nonsensical ``abstract objects'' interpretation
of sets and focus entirely on possible structures \cite{Hel}. But we then
have to ask to what extent traditional set theory can be justified in this
approach. In particular, it would not seem to justify set-theoretic
``constructions'' like the power set operation which do not correspond
to any obvious quasi-physical construction.

\subsection{Uncountable structures}

Our choice of tokens for arbitrary sets of natural numbers is
special to that case, but it is natural to assume that a reasonable
system of tokens for arbitrary sets of X's could be found for any
choice of X. However, that intuition may be misleading.
There is no obvious system of tokens which could be used to
represent arbitrary sets of infinite sequences of 0's and 1's,
for example.

We have to specify more clearly what would count as a token. We
need not insist that these actually be physically
realizable in our universe. Above I used the term ``quasi-physical''
with the intention of suggesting something like ``physically
realizable in some conceivable universe''. This seems like the
right criterion to use given that we want to imagine manipulating
these tokens in the ways mentioned above. So: could a meaningful
system of tokens for arbitrary sets of infinite sequences of 0's
and 1's appear in any conceivable universe?

The difficulty here is that in this example the tokens themselves would
presumably have to be uncountably large. So the question becomes whether
it is possible to clearly conceive of a universe that contains uncountable
structures.

It is not so hard to imagine living in a universe that is infinite
in extent and contains a countable infinity of physical objects.
Indeed, it is quite conceivable that our own universe has this property.
One can even imagine a universe that is finite in extent but still
contains a countable infinity of non-overlapping physical objects
whose sizes decrease rapidly enough that they collectively fill
only a finite volume. But what would it be like to live in a
universe containing {\it uncountably many} non-overlapping physical
objects?

Such a universe is difficult to imagine, and there is a good reason
for this: the L\"owenheim-Skolem theorem. According to this theorem,
any formal description one could give of an uncountable universe
would be equally true of some countable substructure. Thus, there
is nothing we can say, at least formally, that would serve to
distinguish an uncountable universe from a countable one. This
suggests that we have no a priori conception of any uncountable
universe, which presents a serious obstruction to any attempt at
setting up a system of tokens to model a classically uncountable structure.

Taking conceivability as a first principle really calls the whole
notion of uncountability into question. All classical methods
of constructing uncountable sets ultimately rely on the power
set operation. But from the conceptualist perspective the
latter cannot be justified unless one has a prior concept of
uncountable structures. This point may puzzle mathematicians who
are accustomed to thinking of the power set operation as just one
among many straightforward tools for constructing sets. The crucial
distinction that is missed here is between collections that are
{\it surveyable} (we have a clear picture of what it would mean to
exhaustively search such a collection) and those that are merely
{\it determinate} (we can decide whether any given object belongs
to the collection, but have no clear idea even in principle how one
would go about searching through all objects in the collection).
The natural numbers are surveyable, their power ``set'' is merely
determinate.

We have a clear idea of how we would go about searching through the
natural numbers, but not how we could search through the real numbers.
So if ``set'' means ``surveyable collection'' then we cannot a priori
assume that the real line is a set. But if ``set'' means ``determinate
collection'' then basic set-theoretic constructions become problematic:
for instance, it is not clear that we can take the union of a family of
sets indexed by a determinate set since the result might not be
determinate. We cannot decide whether any given object belongs to
the union because this might require searching through the entire
index set.

Somehow the vague sense that the natural numbers and the real line are
both ``sets'' has led us to transfer intuition about one to the other,
resulting in the idea that the power set operation is just a
straightforward set construction. It is not.

Since uncountability is embedded so firmly in current thought,
this point must be emphasized: {\it we have no obvious justification
for the existence of uncountable structures that does not rely
on the conflation of surveyable and determinate collections}.
Moreover, the L\"owenheim-Skolem theorem gives us a powerful reason
to expect that no conceptualistic justification of uncountable
structures is possible. If we cannot describe what it would be like
to exist in a universe containing uncountable structures (in a way
that would distinguish it from some countable subuniverse), then we
presumably cannot imagine it either.

Thus, we reject the notion of actually existing uncountable
structures, not just in our universe but in any conceivable
universe.

\subsection{Countable constructions}

As I mentioned in the last section, I think it is quite possible that
our universe is infinite in extent and contains infinitely many disjoint
physical objects. Regardless of whether this is actually the case, it
would be difficult to argue that this possibility is literally
inconceivable. Therefore, we can accept that countably infinite
structures are legitimately part of the conceptualist landscape.

A more subtle question involves the conceivability of transfinite
computations or constructions. By this I mean processes that not
only potentially involve any finite number of steps, but literally
involve infinitely many steps, and may even continue on after
infinitely many steps have been completed. The philosophical term
for such a process is ``supertask''. For example, we can imagine
resolving the Goldbach conjecture by mechanically searching through
the natural numbers for a counterexample; if no counterexample is
found, the conjecture must be true. This example could require the literal
execution of infinitely many steps, since we might not get an answer
until after all numbers have been checked.

But can we really imagine carrying out and completing such a
process? Can we imagine {\it what it would be like} to live in
a universe in which such supertasks were possible? If not, then
they cannot be admitted into the conceptualist picture.

It may not be obvious that literally infinite constructions
are really conceivable. However, several recent proposals for
carrying out supertasks make it quite plausible that one can
indeed form a perfectly coherent picture of what doing this
would be like. My favorite is a suggestion due to Davies
\cite{Dav} that he calls ``building infinite machines''.

Briefly, Davies' idea is something like this. Suppose we want to
write down all the natural numbers. To achieve this, we build a
machine that accepts as input a natural number $n$, and on this
input it first writes down the number $n$ (say, in decimal notation)
in front of itself; then, to its right, it builds a copy of itself half
its size that runs twice as fast; and finally it feeds this smaller copy
the input $n+1$ and sets it running. Having built such a machine, we feed
it the number 1 as input and set it running. It writes down the number
1, builds a smaller and faster copy of itself, and feeds it the
number 2 as input; that smaller copy writes down the number 2,
builds a still smaller and faster machine, and feeds it the number
3 as input; and so on. Because the machines are speeding up
exponentially the entire task is completed in a finite amount of
time. Because they are shrinking exponentially it takes place in
a finite spatial region.

Davies observes that experiments like this would be possible in a
continuous Newtonian universe. I am not so sure that something like
this is not actually possible in our universe (with the machines being
being ``built'' as perturbations of the electromagnetic field, say),
but for our purposes here merely being able to imagine a universe in which
it could take place is sufficient. I see no logical obstruction to a
universe in which an experiment of Davies' type could actually be performed.

The task I have just described is a particularly simple kind of
supertask; although it does require the literal completion of infinitely
many steps, its goal is merely to write something down. More
problematic would be a supertask like the one which checks the
truth of the Goldbach conjecture and returns an answer. It is
not hard to imagine programming a Davies machine to search
through the natural numbers looking for a counterexample, but
since the size of the machines carrying out this task goes to
zero and we want a final answer to be returned on the scale of
the first machine, this seems to require a discontinuity. In a
universe whose time evolution is continuous there may well be
a logical obstruction to carrying out such a task.

On the other hand, are universes with discontinuous time
evolution literally inconceivable? This would come as a surprise
to proponents of the Copenhagen interpretation of quantum mechanics.
In this interpretation the time evolution in our universe is thought
to involve a discontinuous ``collapse'' of the state vector whenever
an observer makes a measurement. This view can be criticized in
many ways, but one criticism I have never seen is that it is
inconceivable because it involves a discontinuous time evolution.
Indeed, it is easy to imagine universes whose time evolution is
discontinuous.

If discontinuities are possible, then we can easily imagine building
a sequence of Davies machines next to a wire, say, and giving each
machine the ability to send a signal along the wire if it finds the
counterexample it is looking for. A discontinuity appears in the
assumption that we have the ability to detect a signal coming from any
of the machines, no matter how small. On the basis of thought experiments
like these, we can justify the conceivability not only of countably
infinite structures, but also of some transfinite computations or
constructions.

\subsection{Quantifying over the reals}

We have seen that the power set of the natural numbers does not
exist conceptualistically as a well-defined structure. But we have
also seen that the general notion of a set of natural numbers,
realized as an infinite sequence of 0's and 1's, makes perfect sense
conceptualistically. The point is that even though we have
no notion of a construction that would generate {\it all} sequences
of 0's and 1's, we can nonetheless recognize such a sequence when
we see one (or at any rate we could build a Davies machine to perform
such a check).

Similarly, regarding real numbers as Dedekind cuts, our notion of a
general real number is in the preceding sense completely definite, but the
entire real line does not exist conceptualistically as a well-defined
structure.

Because we do understand the general concept of a real number, we can regard
assertions which quantify over all real numbers as intelligible, provided
we are slightly careful about interpreting the meaning of the quantifiers.
For instance, the classical interpretation of ``there exists'' only makes
sense when quantifying over the objects appearing in some
well-defined structure, which we do not
have in this case. However, we can still interpret ``there exists''
constructively, i.e., we can understand an assertion of the existence
of a real number with some property to be an assertion that we have some
way of constructing it. Similarly, we take an assertion that all real
numbers have some property not as meaning that if one checked all real
numbers one would find they all have this property --- this cannot be
done, even in principle --- but rather as meaning that we can know in
advance that any real number that appears will have the property in
question.

This is just the usual intuitionistic interpretation of quantifiers.
Indeed, intuitionistic logic is exactly suited to the present
situation. I will review this logic in Section 2.1. Its most salient
property is that the law $A \vee \neg A$ is not generally valid.
Classically this law can be justified by saying that the truth value
of any statement could in principle be determined by a mechanical
search (much like the Goldbach conjecture in our discussion above);
thus any statement is definitely either true or false. However, when
we quantify over real numbers this justification is not available.
Since the notion of ``all real numbers'' is indefinite in the sense that
there is no structural amalgamation of all real numbers which could
be mechanically surveyed, even in principle, we cannot generally
affirm that any statement about arbitrary real numbers has a definite
truth value. Or, at least, since there could be statements whose truth
value cannot be determined even in principle, such an affirmation would
not have any substantive content.

The main point to keep in mind is that we cannot assume that all
statements have definite truth values; to a large extent,
intuitionistic logic merely codifies the forms of reasoning that
one would naturally adopt in such a case.

\subsection{Quantifying over sets of reals}

Above I have distinguished between fixed, surveyable structures
which can be conceived in toto and determinate concepts
which can be partially, but never wholly, realized by actual concrete
structures. For us classically countable structures generally fall in the
first category and classical structures at the same level of complexity
as the real line fall in the second. I have also claimed that
classical logic is appropriate only when quantifying over surveyable
structures and that intuitionistic logic should be used when quantifying
over indefinitely extendable but determinate concepts.

It must be emphasized that the individual elements that make up an
indefinitely extendable concept such as the real line are themselves
concretely realizable structures. The logical subtlety arises when
we discuss arbitrary structures of some type, e.g., arbitrary sets
of natural numbers.

We now want to consider the classical concept of an arbitrary set of
real numbers. Here even an individual --- a single set of real numbers
--- in general can never be concretely realized. So arbitrary sets of
real numbers can have concrete reality only as something like rules
or prescriptions telling us how to decide, as new real numbers become
available, whether they should be accepted into or rejected from the
set.

I argue in \cite{W5} that the correct logic to use when reasoning
at this level of abstraction is Johansson's minimal logic. This is
a weakening of intuitionistic logic which omits the ex falso law
that states that any assertion follows from a contradiction. The
idea is that, unlike concrete structures, rules or prescriptions
have a genuine potential to be contradictory. Thus, we can set up
standards for reasoning about arbitrary rules, which is what minimal
logic does, but we are not allowed to decree that such rules will be
free from contradiction. The subtle point is that the meaning of the
rules under discussion depends on the logical apparatus used to reason
about them, so incorporating an assertion of consistency into that
apparatus would be circular.

This point is pursued further in \cite{W5}. It is somewhat peripheral
here since we will work in a restricted setting where the ex falso law
is valid. (We consider only ``determinate'' subcollections of $\Pc(\omega)$
for which the membership relation is decidable, rather than the ``definite''
subcollections discussed in \cite{W5}. Thus the law of excluded middle
holds for all atomic formulas, which justifies ex falso; see Section 2.3
below.)

\subsection{Summary}

We reject the proposition that sets literally exist as some kind of
ghostly non-physical objects. This assertion has no meaningful content,
it is responsible for the classical paradoxes of naive set theory, and it is
not even compatible with the grammar of set language in ordinary speech
\cite{Sla}.

The way mathematicians use set language is also incompatible with
the concept of a set as an inert unitary object existing in some
timeless metaphysical realm. Mathematicians handle sets as if they
were articulated structures capable of being physically manipulated.
For the purposes of understanding the foundations of mathematics, it
therefore makes sense to focus on the concept of {\it a possible
structure in a conceivable universe}.

This radically changes the nature of mathematical foundations, most
importantly by delegitimizing the power set operation. There is no
obvious ``construction'' of a structure whose elements correspond to
all substructures of a given structure. Moreover, a general argument
can be made that this would actually be incompatible with the notion
of a conceivable universe. Any conceivable universe should be finitely
describable, and hence subject to the L\"owenheim-Skolem theorem, which
would entail the incompleteness of any putative infinite ``power set''
structure.

Indeed, the applicability of the L\"owenheim-Skolem theorem to conceivable
universes casts doubt on the general concept of an actually uncountable
structure. However, a good case can still be made for the literal
conceivability of transfinite constructions of length $\omega$,
$\omega^2$, $\omega^\omega$, etc.

In the foundational picture that emerges countable structures are
seen as legitimate but we do not accept the idea of an existing
completed uncountable structure. Rather, we treat concepts such
as ``real number'' or ``set of natural numbers'' as definite
concepts which can be only incompletely realized as actual structures.
This calls for the use of intuitionistic logic when we quantify over
all real numbers or all sets of natural numbers.

We can also think of, for example, sets of real numbers as prescriptions
telling us how to decide whether each new real number, as it becomes
available, is to be accepted or rejected. It does not seem possible to
meaningfully iterate this process any further. Since ``legitimate
prescription'' is not even a sharp concept, it is hard to imagine how
one could sensibly model the concept of an arbitrary set of such things.

Our task is now to set out a formal system which expresses the
philosophical point of view set out above, and to show that it
naturally accomodates the vast bulk of normal mainstream mathematics.

\section{The system CM}

\subsection{Systems of logic}

Intuitionistic logic is most easily understood in terms of systems of
``natural deduction'' \cite{Pra}. For more details the reader may also
consult the excellent survey article \cite{vD}.

Informally, the rules for natural deduction in minimal logic are:
\begin{itemize}
\item
Given $\phi$ and $\psi$ deduce $\phi \wedge \psi$;
given $\phi \wedge \psi$ deduce $\phi$ and $\psi$.
\item
Given either $\phi$ or $\psi$ deduce $\phi \vee \psi$;
given $\phi \vee \psi$, a proof of $\sigma$ from $\phi$, and a
proof of $\sigma$ from $\psi$, deduce $\sigma$.
\item
Given a proof of $\psi$ from $\phi$ deduce $\phi \to \psi$;
given $\phi$ and $\phi \to \psi$ deduce $\psi$.
\item
Given $\phi(x)$ deduce $(\forall x)\phi(x)$; if the term
$t$ is free for $x$, given $(\forall x)\phi(x)$ deduce $\phi(t)$.
\item
If the term $t$ is free for $x$, given $\phi(t)$ deduce
$(\exists x)\phi(x)$; if $y$ does not occur freely in $\psi$, given
$(\exists x)\phi(x)$ and a proof of $\psi$ from $\phi(y)$ deduce $\psi$.
\end{itemize}

Natural deduction involves the
concept of ``a proof of $\psi$ from $\phi$''; thus, for example, we may
temporarily assume $\phi$, use this to prove $\psi$, and then infer the
statement $\phi \to \psi$. Temporary assumptions can be nested, just as
in normal informal reasoning, and one has to be slightly careful
about allowing the variables involved in the quantifier rules to appear
in active temporary assumptions. See \cite{Pra} or \cite{vD} for a
precise exposition.

Despite this minor complication, it should be evident that the rules
of natural deduction are indeed very natural and correspond exactly
to ordinary informal reasoning. It could be said that these rules
are nothing more than a direct expression of the meaning of the
various logical sybols.

We let $\perp$ stand for the statement ``$0 = 1$'' and define
negation by letting $\neg \phi$ stand for $\phi \to \perp$. Minimal logic
contains no special rules for negation; in intuitionistic logic
we adopt the ex falso rule ``given $\perp$ deduce $\phi$'' for any
formula $\phi$, and in classical logic we also adopt the law of
excluded middle ``$\phi \vee \neg\phi$'' for any formula $\phi$.
As I explained above in Sections
1.4 and 1.5, this is justified when we are reasoning about objects
belonging to a fixed surveyable structure, but not in broader settings.

It is easy to learn to reason intuitionistically using
natural deduction. For example, we prove $(\phi \to \psi) \to
(\neg\psi \to \neg\phi)$ as follows: (1) assume $\phi \to \psi$;
(2) assume $\neg \psi$; (3) assume $\phi$; (4) deduce $\psi$ from (1)
and (3); (5) deduce $\perp$ from (2) and (4); (6) cancel assumption (3)
and infer $\neg\phi$ from (5); (7) cancel assumption (2) and infer
$\neg\psi \to \neg\phi$ from (6); (8) cancel assumption (1) and
infer $(\phi \to \psi) \to (\neg\psi \to \neg\phi)$ from (7).
Exercise: verify
$\neg(\phi \vee \psi) \leftrightarrow (\neg\phi \wedge \neg\psi)$
and $(\neg\phi \vee \neg\psi) \to \neg(\phi \wedge \psi)$ (both of these
can be proven in minimal logic), and convince oneself that the last
implication cannot be reversed without using excluded middle. For practice
with quantifiers the reader can check that $(\forall x)\neg\phi(x)
\leftrightarrow \neg(\exists x)\phi(x)$ and $(\exists x)\neg\phi(x) \to
\neg(\forall x)\phi(x)$ (again, the last implication cannot be reversed
without using excluded middle). A simple example of a basic law whose proof
requires the use of ex falso is $[(\phi \vee \psi) \wedge \neg \phi]
\to \psi$.

Minimal logic can also be formulated in the following
more standard way. We now eliminate the falsehood symbol $\perp$ and
take negation as primitive. There are three logical rules of inference:
\begin{enumerate}
\item
given $\phi$ and $\phi \to \psi$ deduce $\psi$
\item
given $\phi \to \psi$ deduce $\phi \to (\forall x)\psi$
\item
if $x$ does not occur freely in $\psi$, given $\phi \to \psi$
deduce $(\exists x)\phi \to \psi$.
\end{enumerate}
There are eleven logical axiom schemes:
\begin{enumerate}
\item
$\phi \to (\psi \to \phi)$
\item
$(\phi \to \psi) \to [(\phi \to (\psi \to \sigma)) \to (\phi \to \sigma)]$
\item
$\phi \to [\psi \to (\phi \wedge \psi)]$
\item
$\phi \wedge \psi \to \phi$
\item
$\phi \wedge \psi \to \psi$
\item
$\phi \to (\phi \vee \psi)$
\item
$\psi \to (\phi \vee \psi)$
\item
$(\phi \to \sigma) \to [(\psi \to \sigma) \to (\phi \vee \psi
\to \sigma)]$
\item
$(\phi \to \psi) \to [(\phi \to \neg\psi) \to \neg\phi]$
\item
$\phi(t) \to (\exists x)\phi(x)$
\item
$(\forall x)\phi(x) \to \phi(t)$
\setcounter{TmpEnumi}{\value{enumi}}
\end{enumerate}
In axioms 10 and 11, $t$ is a term which is free for $x$ in $\phi$.

Intuitionistic logic includes the axioms
\begin{enumerate}
\setcounter{enumi}{\value{TmpEnumi}}
\item
$\phi \to (\neg \phi \to \psi)$
\setcounter{TmpEnumi}{\value{enumi}}
\end{enumerate}
and classical logic additionally includes the axioms
\begin{enumerate}
\setcounter{enumi}{\value{TmpEnumi}}
\item
$\phi \vee \neg\phi$.
\end{enumerate}

\subsection{The formal system CM}

We introduce the formal language of third order arithmetic. There
are three kinds of variables: first order variables $a, b, c, \ldots$
(thought of as ranging over $\omega$), second order variables
$A, B, C, \ldots$ (thought of as ranging over subsets of $\omega$), and
third order variables $\Ab, \Bb, \Cb, \ldots$ (thought of as ranging
over subsets of $\Pc(\omega)$). More concretely, we think of first order
variables as representing the objects appearing in some
infinite sequence, second order variables as
representing infinite sequences of 0's and 1's, and third order
variables as representing predicates which take any second order object
as input and return either 0 or 1. A helpful heuristic is: first order
objects are urelements, second order objects are sets of first order
objects, and third order objects are classes of second order objects.

Numerical terms are built up from the number variables and the constant
symbol $0$ using the successor operation $'$ and the binary operations
$+$ and $\cdot$. The atomic formulas of the language consist of all
formulas of the form $t_1 = t_2$, $t_1 \in X$, and $X \in \Xb$ where
$t_1$ and $t_2$ are numerical terms, $X$ is a second order variable, and
$\Xb$ is a third order variable. General formulas are built up from
the atomic formulas using the logical connectives $\wedge, \vee,
\neg, \to$ and the quantifiers $\forall n$, $\exists n$,
$\forall X$, $\exists X$, $\forall \Xb$, and $\exists \Xb$ for any
first, second, and third order variables $n$, $X$, and $\Xb$. Other
symbols such as $\leftrightarrow$, $\subseteq$, etc., are defined
in terms of the above symbols in the usual way, except for $\subseteq$
at the third order level, which we will define in Definition 3.3.

The logical apparatus of CM is intuitionistic logic, as described in Section
2.1. Additionally, we adopt the law of excluded middle $\phi \vee \neg \phi$
for all formulas $\phi$ that contain no second or third order quantifiers.
We also include the usual axioms for equality between terms.

We now state the non-logical axioms of CM. Throughout the
following $m$ and $n$ are any first order variables, $X$, $Y$, and $Z$
are any second order variables, and $\Xb$ is any
third order variable.
\smallskip

\noindent I. {\it Number axioms}:
\begin{enumerate}
\item
$\neg (n' = 0)$
\item
$m' = n' \to m = n$
\item
$m+0 = m$
\item
$m+n' = (m + n)'$
\item
$m\cdot 0 = 0$
\item
$m\cdot n' = (m\cdot n) + m$
\setcounter{TmpEnumi}{\value{enumi}}
\end{enumerate}

\noindent II. {\it Induction and recursion axioms}:
\begin{enumerate}
\setcounter{enumi}{\value{TmpEnumi}}
\item
$[\phi(0) \wedge (\forall n)(\phi(n) \to \phi(n+1))]
\to (\forall n)\phi(n)$
\item
$(\forall n)(\forall X)(\exists Y)\phi(n,X,Y) \to
(\forall X)(\exists Z)[Z_{(0)} = X \wedge
(\forall n)\phi(n, Z_{(n)}, Z_{(n')})]$
\setcounter{TmpEnumi}{\value{enumi}}
\end{enumerate}
for all formulas $\phi$.
\smallskip

\noindent III. {\it Comprehension axioms}:
\begin{enumerate}
\setcounter{enumi}{\value{TmpEnumi}}
\item
$(\forall n)(\phi(n) \vee \neg \phi(n)) \to
(\exists X)(\forall n)(n \in X \leftrightarrow \phi(n))$
\item
$(\forall X)(\phi(X) \vee \neg \phi(X)) \to
(\exists \Xb)(\forall X)(X \in \Xb \leftrightarrow \phi(X))$
\end{enumerate}
for all formulas $\phi$ containing no free occurrences of $X$ in (9),
and no free occurrences of $\Xb$ in (10).
\smallskip

The notation $Z_{(n)}$ is defined below in Definition 3.2.
Essentially, axiom (8) states that if for all $n$ and $X$ there exists
$Y$ such that $\phi(n,X,Y)$, then for any $X$ we can find a sequence
$(Z_n)$ such that $Z_0 = X$ and $\phi(n,Z_n, Z_{n+1})$ holds for all $n$.
\smallskip

\subsection{Discussion}

We adopt the usual intuitionistic interpretation of the logical
symbols, e.g., $(\exists n)\phi(n)$ means that we have (in principle)
a way to find a value of $n$ satisfying $\phi$; $\phi \to \psi$
means that we can convert any proof of $\phi$ into a
proof of $\psi$; and so on.

The rules of minimal logic presented in Section 2.1 directly express
the meanings of the logical symbols, and all atomic formulas satisfy
the law of excluded middle. (We have excluded middle for the statement
$n \in X$ for any $n$ and $X$ since any sequence of 0's and 1's could
in principle be surveyed to determine its truth value, and we have
excluded middle for the statement $X \in \Xb$ since third order objects
are assumed to assign a definite truth value to this statement for any
$X$.) This is enough to justify the ex falso law; to see this,
for each atomic formula $A = A(x_1, \ldots, x_n)$ introduce a
function symbol $f_A$ together with the axiom
$$(A(x_1, \ldots, x_n) \leftrightarrow f_A(x_1, \ldots, x_n) = 1) \wedge
(\neg A(x_1, \ldots, x_n) \leftrightarrow f_A(x_1, \ldots, x_n) = 0).$$
This extension of the original system should be unproblematic. But now
assuming $\bot$ we can deduce $A \to \bot$, i.e., $\neg A$, for any
atomic formula $A = A(x_1, \ldots, x_n)$, then infer $f_A(x_1, \ldots, x_n)
= 0$, then (since $0 = 1$) infer $f_A(x_1, \ldots, x_n) = 1$, and finally
infer $A$. So in the extended system we can actually deduce any atomic
formula $A$ from $\bot$, which is enough to verify ex falso for the
original system.

(In \cite{W5} we considered a broader notion of third order objects,
the ``definite'' subsets of $\Pc(\omega)$. These do not have a decidable
membership relation, so the above justification of the ex falso law
would not be valid. On the other hand, this choice of third order
objects would support a stronger comprehension scheme according to
which $(\exists \Xb)(\forall X)(X \in \Xb \leftrightarrow \phi(X))$
for any formula $\phi$ which contains no third order quantifiers
and no free occurrence of $\Xb$.)

The truth value of any formula with no second or third order quantifiers
could be determined by a countable computation, so we adopt
the law of excluded middle for such formulas.

The number axioms (1) -- (6) assert basic facts which are evidently true
for any $\omega$-sequence. The induction axioms (7) reflect
our acceptance of countable procedures, as they could
be verified by deductions of length $\omega$.

The recursion axioms (8) are usually called ``dependent choice'' \cite{Sim},
but in the context of intuitionistic logic they should not be understood as
choice axioms in the traditional sense.
Intuitionistically we interpret the assertion $(\forall x)\phi(x)$
as expressing that there is a {\it uniform} proof of $\phi(x)$ for all $x$.
Thus the premise $(\forall n)(\forall X)(\exists Y)\phi(n,X,Y)$ entails
possession of a uniform procedure for constructing the desired sequence
$(Z_n)$. In our setting the content of the axiom has to do not with choice
but with our ability to perform countable constructions.

The second order comprehension axioms (9)
hold because we take subsets of $\omega$ to be modelled
by (in principle) actually existing infinite sequences of 0's
and 1's: if we knew that $\phi(n) \vee \neg\phi(n)$ held for all $n$
then in principle we could determine the truth value of $\phi(n)$
for every $n$ and use this information to construct a corresponding
$X$. The third order comprehension axioms (10)
are immediately justified by the fact that we take third order
variables to stand for predicates which assign a definite truth
value to the assertion $X \in \Xb$ for any $X$. That is,
we can take $\Xb$ to be the formula $\phi$ itself.

Our conception of second order objects as appearing in well-ordered
stages, such that only countably many of them are available at each
stage, would also support a genuine choice axiom. The most straightforward
way to formalize it would be to augment CM with a second order relation
symbol $\prec$ and add axioms asserting that $\prec$ is a total
ordering, together with the axioms
$$(\forall X)[(\forall Y)(Y \prec X \to \phi(Y)) \to \phi(X)]
\to (\forall X)\phi(X)$$
asserting progressivity of $\prec$. This should only be asserted for
formulas that do not contain $\prec$, for reasons having
to do with the circularity involved in making sense of a
relation that is well-ordered with respect to properties that are
defined in terms of that relation; see Section 2.5 of \cite{W3}. We
could also add the axiom
$$(\forall X)(\exists Z)(\forall Y)[Y \prec X \to (\exists n)(Y = Z_{(n)})]$$
expressing the fact that only countably many second order objects are
available at any moment. We call the resulting system CM${}^+$. None of
the mathematics we develop in Section 3 requires the extra axioms of
CM${}^+$.

We record two general facts about CM that will be of use in the
next section. First, the comprehension axioms immediately imply
{\it arithmetical comprehension}:

\begin{theo}
(a) Let $\phi$ be a formula that contains no free occurences of $X$ and
no second or third order quantifiers. Then CM proves
$$(\exists X)(\forall n)(n \in X \leftrightarrow \phi(n)).$$

(b) Let $\phi$ be a formula that contains no free occurences of $\Xb$ and
no second or third order quantifiers. Then CM proves
$$(\exists \Xb)(\forall X)(X \in \Xb \leftrightarrow \phi(X)).$$
\end{theo}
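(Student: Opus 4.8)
The plan is to derive both parts directly from the comprehension axioms (9) and (10) by discharging their excluded-middle hypotheses. The key observation is that the restriction placed on $\phi$ in the theorem---namely, that it contains no second or third order quantifiers---is exactly what makes CM's restricted law of excluded middle applicable, so that the antecedents $(\forall n)(\phi(n) \vee \neg\phi(n))$ and $(\forall X)(\phi(X) \vee \neg\phi(X))$ become provable rather than merely hypothetical.

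For part (a), I would first note that since $\phi$ contains no second or third order quantifiers, neither does $\phi(n)$ for a free first order variable $n$: substituting a first order term for a first order variable introduces no quantifiers. Hence $\phi(n) \vee \neg\phi(n)$ is an instance of the law of excluded middle that CM adopts for all formulas without second or third order quantifiers, and applying the $\forall$-introduction rule of Section 2.1 yields $(\forall n)(\phi(n) \vee \neg\phi(n))$. Since $\phi$ also has no free occurrence of $X$, it is admissible in comprehension axiom (9), and modus ponens with that axiom produces $(\exists X)(\forall n)(n \in X \leftrightarrow \phi(n))$.

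Part (b) follows the same pattern one type level up. Again $\phi(X)$ has no second or third order quantifiers, so $\phi(X) \vee \neg\phi(X)$ is an instance of the restricted excluded middle; universally generalizing over the second order variable $X$ gives $(\forall X)(\phi(X) \vee \neg\phi(X))$. Because $\phi$ has no free occurrence of $\Xb$, it is admissible in comprehension axiom (10), and modus ponens produces $(\exists \Xb)(\forall X)(X \in \Xb \leftrightarrow \phi(X))$.

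There is no substantial obstacle here; the entire content reduces to the single observation that the quantifier restriction on $\phi$ guarantees the restricted law of excluded middle, which is precisely what discharges the hypotheses of (9) and (10). The only point requiring any care is the routine verification that forming $\phi(n)$ or $\phi(X)$ from $\phi$ introduces no second or third order quantifiers, so that the excluded-middle scheme genuinely applies to these instances. It is worth emphasizing that full classical logic is not being invoked---only CM's weakened excluded middle for quantifier-free-in-the-higher-types formulas---which is exactly why the hypothesis of the theorem must exclude second and third order quantifiers.
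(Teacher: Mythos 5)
Your proof is correct and follows exactly the route the paper intends: the paper states that the comprehension axioms ``immediately imply'' arithmetical comprehension, and the implicit argument is precisely yours---the quantifier restriction on $\phi$ makes CM's restricted law of excluded middle yield $(\forall n)(\phi(n)\vee\neg\phi(n))$ (resp.\ $(\forall X)(\phi(X)\vee\neg\phi(X))$), which discharges the antecedent of comprehension axiom (9) (resp.\ (10)) by modus ponens. Your added care about free variables versus quantifiers (that $\phi(n)$, $\phi(X)$ still contain no second or third order quantifiers, even though $\phi(X)$ has a free second order variable) is a correct and worthwhile elaboration of what the paper leaves unstated.
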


Second, we have a principle of {\it numerical omniscience}:

\begin{theo}
For any formulas $\phi$ and $\psi$, CM proves
$$(\forall n)(\phi(n) \vee \psi(n)) \to
[(\forall n)\phi(n) \vee (\exists n)\psi(n)].$$
\end{theo}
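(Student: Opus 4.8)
The plan is to exploit the special law of excluded middle that CM grants to formulas containing no second- or third-order quantifiers. Note first that the statement is genuinely stronger than anything pure intuitionistic logic delivers: taking $\psi(n) := \neg\phi(n)$ for a decidable $\phi$ turns it into the limited principle of omniscience $(\forall n)\phi(n) \vee (\exists n)\neg\phi(n)$, which is not a theorem of intuitionistic arithmetic. So some non-logical axiom of CM must do real work. The idea is that, although the disjunction $\phi(n) \vee \psi(n)$ is not decidable (the formulas may carry higher-order quantifiers), we can freeze the choices it offers into a genuine second-order object, at which point first-order excluded middle becomes available.

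Concretely, I would first reduce the theorem to producing a second-order object $C$ satisfying $(\forall n)[(n \in C \to \psi(n)) \wedge (n \notin C \to \phi(n))]$. Granting such a $C$, the formula $(\exists n)(n \in C)$ has no second- or third-order quantifiers (it mentions only the free variable $C$ and a first-order existential), so CM's excluded middle applies: either $(\exists n)(n \in C)$ or $\neg(\exists n)(n \in C)$. In the first case a witness $n$ gives $\psi(n)$, hence the right disjunct $(\exists n)\psi(n)$; in the second case $\neg(\exists n)(n \in C)$ yields $(\forall n)(n \notin C)$ in minimal logic, whence $(\forall n)\phi(n)$, the left disjunct.

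To obtain $C$ I would invoke the recursion axiom (8). Define $\Theta(n, X, Y)$ to be $(n \in Y \to \psi(n)) \wedge (n \notin Y \to \phi(n))$. The premise $(\forall n)(\forall X)(\exists Y)\Theta(n, X, Y)$ holds: given $n$, case-split on the hypothesis $\phi(n) \vee \psi(n)$ --- if $\psi(n)$ take $Y$ with $n \in Y$, if $\phi(n)$ take $Y$ with $n \notin Y$ --- where in each case the surviving implication holds by the chosen disjunct and the other holds vacuously by ex falso. Axiom (8) then furnishes a sequence $Z$ with $(\forall n)\Theta(n, Z_{(n)}, Z_{(n')})$, and since the defining condition $n \in Z_{(n')}$ is quantifier-free (hence decidable), the second-order comprehension axiom (9) produces $C$ with $n \in C \leftrightarrow n \in Z_{(n')}$, which has the required property.

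The delicate step is verifying the premise of (8): the whole point is that we must exhibit the next bit $Y$ \emph{without} deciding whether $\phi(n)$ or $\psi(n)$ holds, merely by case-splitting on the given disjunction and letting ex falso dispose of the inactive branch --- this is exactly what the intuitionistic meaning of the hypothesis supplies. The conceptual heart of the argument, where I expect the real content to lie, is the passage from the undecidable disjunction to the concrete second-order object $C$: once the choices are recorded in an actual surveyable sequence, the first-order statement $(\exists n)(n \in C)$ falls under CM's enlarged excluded middle, and the omniscience principle follows. The coding behind $Z_{(n)}$ (Definition 3.2) and the bookkeeping in the comprehension step are routine.
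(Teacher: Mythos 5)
Your proof is correct and takes essentially the same route as the paper's: both use the recursion axiom (8) followed by arithmetical comprehension to freeze the choices offered by $(\forall n)(\phi(n) \vee \psi(n))$ into a single second-order object, and then apply CM's excluded middle for formulas without higher-order quantifiers to that object. The only differences are cosmetic --- the paper records the bits via the formula $(\phi(n) \wedge n \in Y) \vee (\psi(n) \wedge n \notin Y)$ rather than your pair of implications, and concludes by case-splitting on $(\forall n)(n \in Y) \vee (\exists n)(n \notin Y)$ instead of $(\exists n)(n \in C) \vee \neg(\exists n)(n \in C)$.
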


Intuitively, our ability to perform countable constructions and the
fact that we have a uniform proof of $\phi(n) \vee \psi(n)$ allows
us to verify either $(\forall n)\phi(n)$ or $(\exists n)\psi(n)$.
The theorem is formally proven as follows. Suppose $(\forall n)(\phi(n) \vee
\psi(n))$. Then for any $n$ there exists $X$ such that
$$(\phi(n) \wedge 0 \in X) \vee (\psi(n) \wedge 0 \not\in X).$$
Dependent choice (axiom (8)) followed by arithmetical comprehension then
yields a set $Y$ such that
$$(\phi(n) \wedge n \in Y) \vee (\psi(n) \wedge n \not\in Y)$$
holds for all $n$. Finally, since classical logic holds for formulas
without second or third order quantifiers, we have
$(\forall n)(n \in Y) \vee (\exists n)(n \not\in Y)$,
and we can then infer $(\forall n)\phi(n) \vee (\exists n)\psi(n)$.

As a special case of numerical omniscience we have that
$(\forall n)(\phi(n) \vee \neg\phi(n))$ implies both $(\forall n)\phi(n)
\vee \neg(\forall n)\phi(n)$ and $(\exists n)\phi(n) \vee
\neg(\exists n)\phi(n)$. In effect, this means that any
formula whose truth can be evaluated in countably many steps satisfies
the law of excluded middle. This is important because it implies that
we can use classical logic, in particular proofs by contradiction, in
such cases.

As an alternative formulation of CM we could adopt numerical omniscience
as an axiom and only assume the law of excluded middle for atomic formulas.
It comes to the same thing because numerical omniscience plus excluded
middle for atomic formulas implies excluded middle for all
arithmetical formulas (by induction on formula complexity).

\section{Development of core mathematics}

\subsection{Preliminaries}

The remainder of the paper will sketch how core mathematics can
be developed within the framework presented in Section 2. We concentrate
on analysis because this is the mainstream area that is, broadly speaking,
most resistant to formalization in weak systems. Our development
is similar to the one in \cite{W2}. Probably the main hurdles to overcome
are getting accustomed to the basic set-up established in the
present section and familiarizing oneself with the technique of using
comprehension axioms to prove existence results. In the following we will
reason informally in CM.

\begin{defi}
Let $\tilde{N}$ be a second-order constant denoting the natural numbers
(including zero):
$$(\forall n)(n \in \tilde{N}).$$
\end{defi}

$\tilde{N}$ exists by second order comprehension. CM contains the axioms of
Peano arithmetic, so elementary number theory
in $\tilde{N}$ can be developed as usual.

\begin{defi}
An {\it ordered $k$-tuple of natural numbers} ($k \geq 2$)
is a nonzero natural number
having no prime divisors besides $p_1, \ldots, p_k$, where $p_i$ is
the $i$th prime. We write $\langle a_1, \ldots, a_k\rangle =
p_1^{a_1}\cdots p_k^{a_k}$. For any $X_1, \ldots, X_k$ ($k \geq 2$) we define
$$X_1 \times \cdots \times X_k = \{\langle a_1, \ldots, a_k\rangle:
a_1 \in X_1, \ldots, a_k \in X_k\}.$$
A {\it sequence of second order objects} is a second order object contained
in $\tilde{N}^2 = \tilde{N} \times \tilde{N}$. For such an object $X$ and each
$a \in \tilde{N}$ we write $X_{(a)} = \{b: \langle a,b\rangle \in X\}$.

At the third order level, for any $\Xb_1, \ldots, \Xb_k$ ($k \geq 2$) we define
$\Xb_0 \times \cdots \times \Xb_k$ to be the third order object consisting
of all sequences $X$ of
second order objects such that $X_{(0)} \in \Xb_0, \ldots,
X_{(k)} \in \Xb_k$, and $X_{(l)} = \emptyset$ for all $l > k$;
we write $X = \langle X_{(0)}, \ldots, X_{(k)}\rangle$.
\end{defi}

In the preceding definition $X_1 \times \cdots \times X_k$, $X_{(a)}$,
and $\Xb_0 \times \cdots \times \Xb_k$ all
exist by arithmetical comprehension. Also observe that if
$X = (X_{(n)})$ is a sequence of second order objects then
$\bigcup X_{(n)}$ and $\bigcap X_{(n)}$ exist by arithmetical
comprehension.

We now introduce set language. Notice that in classical set theory quotient
constructions involve passing to a higher type, an avenue that is not
available here. One way to define quotients in the present setting
would be to model the quotient structure by selecting one element from
each equivalence class. This could generally be done using the third order
choice axiom in CM${}^+$ (see Section 2.3), but this method introduces
extraneous information. In other words, quotients become noncanonical.

We opt instead to model a ``set'' as a third order object equipped with
an equivalence relation. This is reflected in the basic definitions that
follow but shows up almost nowhere else: once we agree that we are, in
effect, always working up to equivalence, there is no further need to
explicitly refer to that equivalence.

\begin{defi}
A {\it set} is a third order object $\Xb$ together with a third order
object $\equiv$ contained in $\Xb^2 = \Xb \times \Xb$ which is reflexive,
symmetric, and transitive. We call $\equiv$ the {\it identity} on $\Xb$.
Notationally, we will typically suppress $\equiv$ and refer to $\Xb$ as the
set. A {\it subset} of a set $\Xb$ is a third order object $\Yb$ that is
contained in $\Xb$ and satisfies $X \in \Yb,\, X \equiv Y \Rightarrow Y \in \Yb$,
together with the identity $(\equiv\, \cap \Yb^2)$. We write
$\Yb \subseteq \Xb$. A {\it quotient} of a
set $\Xb$ with identity $\equiv$ is the same third order object $\Xb$
together with an identity $\equiv'$ that contains $\equiv$.
The {\it product} of two sets $\Xb$ and $\Yb$ is the third order
object $\Xb \times \Yb$ together with the identity defined by
$\langle X,Y\rangle \equiv \langle X',Y'\rangle$ if and only if
$X \equiv X'$ and $Y \equiv Y'$.

A {\it relation} on a set $\Xb$ is a subset of $\Xb^2$. A {\it function}
from a set $\Xb$ to a set $\Yb$ is a subset $\fb$ of $\Xb \times
\Yb$ such that for every $X \in \Xb$ there exists exactly one (up to
identity) $Y \in \Yb$ with $\langle X,Y\rangle \in \fb$.
\end{defi}

The next proposition should help orient the reader to the kind of thinking
involved in reasoning in CM.

\begin{prop}
Let $\Xb$ and $\Yb$ be sets, let $\fb: \Xb \to \Yb$ be a function, and let
$\Yb_0$ be a subset of $\Yb$. Then $\fb^{-1}(\Yb_0)$ exists and is a subset
of $\Xb$.
\end{prop}

\begin{proof}
We show that the condition ``$\fb(X) \in \Yb_0$'' satisfies excluded middle.
By third order comprehension this implies that $\fb^{-1}(\Yb_0) =
\{X \in \Xb: \fb(X) \in \Yb_0\}$ exists; the fact that it is compatible
with the identity on $\Xb$
is an easy consequence of the fact that $\fb$ is a subset of $\Xb \times \Yb$
together with the definition of the identity on $\Xb\times \Yb$.

To verify the claim, let $X \in \Xb$. Then by the definition of a function
there exists $Y \in \Yb$, unique up to identity, such that $\langle X,Y\rangle
\in \fb$. Since we assume excluded middle for arithmetical formulas, we have
$(Y \in \Yb_0) \vee (Y \not\in \Yb_0)$, and since $\Yb_0$ is a subset of $\Yb$
this truth value does not depend on the choice of $Y$ up to identity. Therefore
$\fb(X) \in \Yb_0$ satisfies excluded middle.
\end{proof}

In contrast, the push-forward $\fb(\Xb_0)$ need not exist in general because
we have no way to effectively test whether a given $Y \in \Yb$ belongs to the
image. The image of $\Xb_0$ is continually expanding as new second order
objects appear, and given $Y \in \Yb$ we may not be able to predict whether
some future $X \in \Xb$ will map to $Y$. (Inverse images also expand, but
given $X \in \Xb$ the fact that $\fb$ is a function means that we can
construct $\fb(X)$ and we can then immediately check wither $\fb(X)$ belongs
to $\Yb_0$. Further expansion of $\Yb_0$ cannot affect this result.)

However, if $\Xb_0$ is countable (see Definition 3.8 below) then we can use
numerical omniscience to check whether $Y$ belongs to $\fb(\Xb_0)$, so images
of countable sets do always exist (Proposition 3.9).

\subsection{$\tilde{Z}$ and $\tilde{Q}$}

We encode the integers using a sign bit (0 for $+$ and 1 for $-$):

\begin{defi}
Let $\tilde{Z}$ be a second order constant denoting the set of all ordered
pairs of natural numbers $\langle a,b\rangle$ such that either $a = b = 0$ or
$a > 0$ and $b = 0$ or $1$. We define addition in $\tilde{Z}$ by letting
$\langle a,b\rangle + \langle a',b'\rangle$ be

\begin{tabular}{l l}
$\langle a + a', b\rangle$ &if $b = b'$\cr
$\langle a - a', 0\rangle$ &if $b = 0$, $b' = 1$, and $a \geq a'$\cr
$\langle a' - a, 1\rangle$ &if $b = 0$, $b' = 1$, and $a < a'$\cr
$\langle a - a', 1\rangle$ &if $b = 1$, $b' = 0$, and $a > a'$\cr
$\langle a' - a, 0\rangle$ &if $b = 1$, $b' = 0$, and $a \leq a'$.\cr
\end{tabular}

\noindent The product and the order relation are defined by cases in the
same way.
\end{defi}

The existence of $\tilde{Z}$ is provable in CM by arithmetical comprehension.
Basic properties of $\tilde{Z}$ as an ordered ring are straightforwardly
provable as facts of first order arithmetic.

We define the rationals as fractions in lowest terms.

\begin{defi}
Let $\tilde{Q}$ be a second order constant denoting
the set of all ordered pairs $\langle a,b\rangle$ with $a,b \in \tilde{Z}$
relatively prime and $b$ positive.
\end{defi}

Here ``relatively prime'' and ``positive'' mean with respect to the product
and order on $\tilde{Z}$. $\tilde{Q}$ exists by arithmetical comprehension.

We define order, addition, multiplication, and division (with nonzero
denominator) in $\tilde{Q}$ in the usual way.
All these definitions are arithmetical and basic
properties of $\tilde{Q}$ as an ordered field are straightforwardly provable
as facts of first order arithmetic. (For more details on
the material of this section, with inessentially different definitions, see
Section II.4 of \cite{Sim}.)

\subsection{The real line}

We now define the real line in terms of Dedekind cuts of $\tilde{Q}$.

\begin{defi}
Let $\Rb$ be a third order constant satisfying $X \in \Rb$ if and
only if $X \subseteq \tilde{Q}$, $\emptyset \neq X \neq \tilde{Q}$,
$X$ has no greatest element, and
$$(p \in X,\, q \in \tilde{Q},\, q < p) \Rightarrow q \in X$$
(where $<$ is the order relation defined on $\tilde{Q}$).
We equip $\Rb$ with the trivial identity $x \equiv y \Leftrightarrow
x = y$.
\end{defi}

$\Rb$ contains canonical copies of $\tilde{N}$, $\tilde{Z}$, and
$\tilde{Q}$, which we denote $N$, $Z$, and $Q$. For instance, $X \in Q$
holds if and only if there exists $p \in \tilde{Q}$ such that $q \in X
\Leftrightarrow q < p$.

Note that by arithmetical comprehension, for any $\tilde{X} \subseteq
\tilde{Q}$ there exists $X \subseteq Q$ containing the corresponding elements,
and vice versa. Similar statements hold for $N$ and $Z$. Thus, we can
effectively identify $N$ with $\tilde{N}$, $Z$ with $\tilde{Z}$, and
$Q$ with $\tilde{Q}$, and we will generally do so without comment.

Now that we have a third order version of $N$, we can make the following
definitions:

\begin{defi}
A set $\Xb$ is {\it countable} if there exists a surjective
function $\fb$ from $N$ to $\Xb$.
A {\it sequence of subsets of $\Xb$} is a subset $\Yb$ of
$N \times \Xb$; we write
$$\Yb_{(n)} = \{X: \langle n, X\rangle \in \Yb\}$$
(and set $X \equiv Y$ in $\Yb_{(n)}$ if $\langle n,X \rangle \equiv
\langle n,Y\rangle$ in $\Yb$).
The {\it product} $\prod \Yb_{(n)}$ of a sequence $\Yb$ of subsets of a
set is the set of all sequences $Y$ of second order objects such that
$Y_{(n)} \in \Yb_{(n)}$ for all $n$, with $Y \equiv Y'$ if
$Y_{(n)} \equiv Y'_{(n)}$ for all $n$.
\end{defi}

For any $n \in \tilde{N}$ and $X \subseteq \tilde{N}$ the ordered pair
$\langle n,X\rangle$ exists by arithmetical comprehension.  It follows
that if $\Yb$ is a sequence of subsets of $\Xb$ then $X \in \Yb_{(n)}
\vee X \not\in \Yb_{(n)}$, for any $n$ and $X$.
From this one easily sees (using third order comprehension) that
$\bigcup \Yb_{(n)}$ and $\bigcap \Yb_{(n)}$ exist.

\begin{prop}
Let $\fb: \Xb \to \Yb$ be a function and let $\Xb_0 \subseteq \Xb$ be
countable. Then $\fb(\Xb_0)$ exists and is a subset of $\Yb$.
\end{prop}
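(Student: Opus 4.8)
The plan is to produce $\fb(\Xb_0)$ by third order comprehension from the condition ``$Y$ lies in the image of $\fb$ restricted to $\Xb_0$'', once that condition has been shown to satisfy excluded middle. Since $\Xb_0$ is countable, I would first fix a surjective function $\gb \colon N \to \Xb_0$, and use the identification of $N$ with $\tilde{N}$ so that the enumeration is indexed by a first order variable $n$. Surjectivity of $\gb$ gives, for each $Y \in \Yb$, the equivalence of $Y \in \fb(\Xb_0)$ with $(\exists n)(\fb(\gb(n)) \equiv Y)$; this is the defining condition I will hand to comprehension, and the role of countability is precisely that the witnessing quantifier is now a numerical $(\exists n)$.

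The key step is to verify excluded middle for this condition. Writing $\theta(n,Y)$ for $\fb(\gb(n)) \equiv Y$, I would first argue that each $\theta(n,Y)$ is decidable for fixed $n$. Although $\theta(n,Y)$ unpacks into a formula carrying second order existential quantifiers (asserting existence of $X$ with $\langle n,X\rangle \in \gb$ and of $W$ with $\langle X,W\rangle \in \fb$), the function axioms of Definition 3.3 let me actually construct the witnesses: given $n$ there is a unique-up-to-identity $X = \gb(n)$, and then a unique-up-to-identity $W = \fb(X)$. By uniqueness, the truth of $\theta(n,Y)$ depends only on whether $\langle W,Y\rangle$ lies in the identity relation $\equiv$ on $\Yb$, and since $\equiv$ is a third order object this is an atomic formula, so excluded middle holds. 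Having established $(\forall n)(\forall Y)(\theta(n,Y) \vee \neg\theta(n,Y))$, I would then invoke numerical omniscience (Theorem 2.2) with the two alternatives $\neg\theta(n,Y)$ and $\theta(n,Y)$: this yields $(\forall n)\neg\theta(n,Y) \vee (\exists n)\theta(n,Y)$ for each $Y$, and using the minimal-logic equivalence of $(\forall n)\neg\theta(n,Y)$ with $\neg(\exists n)\theta(n,Y)$ this is exactly excluded middle for the image condition.

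With excluded middle in hand, third order comprehension (axiom (10)) produces $\fb(\Xb_0) = \{Y \in \Yb : (\exists n)(\fb(\gb(n)) \equiv Y)\}$. It then remains to check that this is a subset of $\Yb$ in the sense of Definition 3.3. Containment in $\Yb$ is built into the defining condition, and compatibility with the identity is immediate from transitivity of $\equiv$: if $Y$ satisfies the condition via a witness $W$ with $W \equiv Y$, and $Y \equiv Y'$, then $W \equiv Y'$, so $Y'$ satisfies it as well.

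I expect the main obstacle to be the decidability step for fixed $n$. The naive reading of $\fb(\gb(n)) \equiv Y$ involves second order quantifiers, which are not covered by excluded middle, so the argument must lean on the fact that function values are uniquely and constructively determined in order to collapse the question to an atomic third order membership. This is exactly the feature that fails for an arbitrary (non-countable) $\Xb_0$, where there is no enumeration $\gb$ and hence no way to survey the potential preimages; it is why numerical omniscience, rather than bare comprehension, is the tool that does the real work, and why the countability hypothesis is essential.
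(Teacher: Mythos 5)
Your proof is correct and follows the same route as the paper's (deliberately terse) proof: verify excluded middle for the image condition via numerical omniscience, using the enumeration of $\Xb_0$ to turn the second order existential into a numerical one, then invoke third order comprehension. Your expanded decidability argument for fixed $n$ --- constructing the unique-up-to-identity witnesses via the function axioms and reducing to the atomic formula $\langle W,Y\rangle \in \,\equiv$ --- is exactly the technique the paper established in Proposition 3.4 and implicitly reuses here, so the extra detail you supply is faithful to, not divergent from, the intended argument.
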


\begin{proof}
We can verify excluded middle for the condition $(\exists X)(X \in \Xb
\wedge Y = \fb(X))$ by numerical omniscience.
\end{proof}

The order relation and the algebraic operations on $\Rb$ are easily
(arithmetically) defined. Also, if $(X_n)$ is a sequence of reals that
is bounded above then $\bigcup X_n \in \Rb$ is its least upper bound.
So $\Rb$ is, in this sense,
a sequentially complete ordered field. The converse is also true:

\begin{theo}
$\Rb$ is a sequentially complete ordered field. Every sequentially
complete ordered field is isomorphic to $\Rb$.
\end{theo}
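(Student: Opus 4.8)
The first assertion is essentially the content of the paragraph preceding the statement: the algebraic operations and order on $\Rb$ are arithmetically defined, the ordered field axioms are checked exactly as in the classical treatment of Dedekind cuts, and sequential completeness holds because for a bounded-above sequence $(X_n)$ of cuts the cut $\bigcup X_n$ is the least upper bound. So the work is entirely in the second assertion, and the plan is as follows. Let $\Fb$ be an arbitrary sequentially complete ordered field, i.e.\ a set in the sense of Definition 3.3 carrying field operations and an order relation for which every bounded-above sequence has a least upper bound. I would construct an order- and field-isomorphism $\psi \colon \Fb \to \Rb$. First I would embed the prime field: define $\bar{(\cdot)} \colon Q \to \Fb$ sending each rational to the corresponding quotient of iterated sums of $1_\Fb$. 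This is a function in the sense of CM (arithmetically defined from the field operations) and is order-preserving.

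The step I expect to be the crux is establishing that $\Fb$ is Archimedean, and here the interaction with the logic matters. The order relation on $\Fb$ is a relation on a set, hence a third order object contained in $\Fb^2$, so $x < y$ is an instance of third order membership and therefore satisfies excluded middle; this decidability of $<$ on $\Fb$ is precisely what will make cuts of $\Fb$-elements definable. I claim the sequence $n \mapsto \bar n$ is not bounded above: were it bounded, sequential completeness would give a least upper bound $s$, but $\bar{(n+1)} = \bar n + 1_\Fb \le s$ for all $n$ forces $\bar n \le s - 1_\Fb$ for all $n$, so $s - 1_\Fb$ would be a strictly smaller upper bound, a contradiction. Since $\bar n > y$ is decidable in $n$, combining this non-boundedness with numerical omniscience (Theorem 2.2) yields, for each $y \in \Fb$, some $n$ with $\bar n > y$; applying the same to $-y$ gives the lower Archimedean bound as well.

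Next I would define $\psi(y) = \{q \in Q : \bar q < y\}$. The defining condition is decidable, so $\psi(y)$ exists by comprehension (via the embedding, this is arithmetical). The Archimedean property makes $\psi(y)$ a genuine cut: it is nonempty and proper because $\bar{(-m)} < y < \bar n$ for suitable $m,n$, it is downward closed because $\bar{(\cdot)}$ is order-preserving, and it has no greatest element by density of the rationals in $\Fb$ (a consequence of Archimedeanness). One then checks directly that $\psi$ respects the identity on $\Fb$ and that it is order-preserving in both directions, hence injective; being a field homomorphism is the usual verification, routine for addition and a sign-by-sign argument for multiplication.

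Surjectivity is where the completeness hypothesis is actually consumed. Given $X \in \Rb$, the cut $X$ is a nonempty decidable subset of $Q \subseteq \tilde{N}$, hence countable, so its image under $\bar{(\cdot)}$ is realized as a sequence in $\Fb$ that is bounded above by $\bar q_0$ for any $q_0 \notin X$; let $y$ be its least upper bound. If $q \in X$ then, since $X$ has no greatest element, some $q' \in X$ has $q' > q$, whence $\bar q < \bar{q'} \le y$; and if $q \notin X$ then $q$ is an upper bound of $X$ in $Q$, so $\bar q$ bounds the image and $y \le \bar q$. Thus $\psi(y) = X$, and $\psi$ is an order-preserving field bijection, i.e.\ an isomorphism. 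The main obstacle throughout is not any single computation but keeping the construction faithful to the intuitionistic setting: everything rests on the decidability of the order of $\Fb$ (so that cuts can be formed at all) and on extracting the Archimedean property from sequential completeness, after which countability of each individual cut lets the least-upper-bound property supply the required preimages.
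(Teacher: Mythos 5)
Your proposal is correct at the paper's level of rigor and correctly handles the intuitionistic subtleties, but it takes a genuinely different route at the key step. The paper follows the classical skeleton by first isolating the countable subfield $F$ of $\Fb$ generated by $1$, and it locates all of the novelty in Lemma 3.11: the existence of $F$ as a set, proved by enumerating words, checking decidability of word evaluation by induction on length, and combining numerical omniscience with third order comprehension; the isomorphism is then obtained from the density of $F$ in $\Fb$, with the Archimedean issue left tacit inside ``show that $F$ is dense in $\Fb$.'' You bypass that lemma entirely: you never need $F$ as a subset of $\Fb$, only the embedding $Q \to \Fb$ as a function, and you place the technical weight instead on decidability of the order (as third order membership) and on extracting the Archimedean property from sequential completeness via numerical omniscience (Theorem 2.2), which you make explicit where the paper does not. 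Your decomposition is more economical for this particular theorem, since the expensive part of Lemma 3.11 --- deciding membership in the image of a countable set --- is never needed to define $\psi(y) = \{q \in Q : \bar q < y\}$ or to prove surjectivity; what the paper's route buys is a reusable tool, as Lemma 3.11 is explicitly flagged for later use (countably generated subobjects in general algebraic settings, and again in Definition 3.70). One imprecision worth fixing: the embedding $\bar{(\cdot)}\colon Q \to \Fb$ is not ``arithmetically defined from the field operations,'' because the operations of an abstract $\Fb$ are third order function objects rather than arithmetical operations; producing the sequence $(\bar n)$ of iterated sums of $1_\Fb$ requires the recursion axiom (8) (dependent choice), after which comprehension yields the function. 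This is the same use of axiom (8) the paper itself makes in proving numerical omniscience, so the step is available in CM, but it should be cited rather than labeled arithmetical.
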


The second statement of the theorem is proven in the usual way:
given any sequentially complete ordered field $\Fb$, first isolate
the countable subfield $F$ generated by $1$; then establish an
isomorphism between $F$ and $Q$; and finally show that $F$
is dense in $\Fb$ and
use this to define the desired isomorphism between $\Fb$ and $\Rb$.
The novelty here is that the existence of $F$ requires proof. The
result we need is stated in the following lemma.

\begin{lemma}
Any countable subset of a field $\Fb$ generates a countable subfield of $\Fb$.
\end{lemma}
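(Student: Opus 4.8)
The plan is to construct the subfield generated by a countable set $\Xb_0 \subseteq \Fb$ by building it up as a countable union of countable sets obtained by closing under the field operations. First I would enumerate $\Xb_0$ via a surjection $\fb_0 : N \to \Xb_0$, which exists by countability. The strategy is to define a sequence of countable subsets $\Xb_0 \subseteq \Xb_1 \subseteq \Xb_2 \subseteq \cdots$ where each $\Xb_{k+1}$ is obtained from $\Xb_k$ by adjoining all sums, products, additive inverses, and multiplicative inverses (of nonzero elements) of elements of $\Xb_k$, then take the union. The key observation making this work in CM is that at each stage, since $\Xb_k$ is countable and the field operations are functions, the images under these operations are again countable by Proposition 3.9 (images of countable sets exist), so $\Xb_{k+1}$ is a finite union of countable sets and hence countable.

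The main technical work is to carry out this countably-indexed recursion within CM. I would use the dependent choice axiom (axiom (8)) to produce the sequence of surjections $\fb_k : N \to \Xb_k$ uniformly: given a surjection onto $\Xb_k$, one can explicitly construct a surjection onto $\Xb_{k+1}$ by interleaving $\fb_k$ with enumerations of the pairwise sums, products, and the unary inverse operations, all of which are definable from $\fb_k$ by arithmetical comprehension. Thus the premise $(\forall k)(\forall X)(\exists Y)\phi(k,X,Y)$ of dependent choice is satisfied, where $\phi$ asserts that $Y$ codes a surjection onto the closure of the structure coded by $X$ under one step of the operations. Applying axiom (8) yields a single sequence $(\fb_k)$, from which I would extract a single surjection $\fb : N \to \bigcup_k \Xb_k$ by a standard pairing: on input $\langle k, n\rangle$ output $\fb_k(n)$. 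This shows $\Fb_0 := \bigcup_k \Xb_k$ is countable.

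Finally I would verify that $\Fb_0$ is actually a subfield and that it is the subfield generated by $\Xb_0$. That $\Fb_0$ is closed under the operations follows because any finitely many elements of $\Fb_0$ already lie in some common $\Xb_k$ (using that the sequence is increasing and that the operations are binary or unary), so their sum, product, or inverse lies in $\Xb_{k+1} \subseteq \Fb_0$. That $\Fb_0$ is the \emph{smallest} subfield containing $\Xb_0$ is immediate by induction on $k$, since any subfield containing $\Xb_0$ must contain each $\Xb_k$. The existence of $\Fb_0$ as a set (third order object) comes from third order comprehension, using that membership in $\Fb_0$ is witnessed by the surjection $\fb$ and hence, via numerical omniscience applied to ``$(\exists n)(\fb(n) = Y)$'', satisfies excluded middle.

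The hard part will be the bookkeeping in the dependent choice step: I must phrase the single-step closure operation so that the witness $Y$ at stage $k+1$ is produced \emph{uniformly} from the witness at stage $k$, since the intuitionistic reading of axiom (8) demands a uniform construction procedure rather than an arbitrary choice. Concretely, the enumeration of $\Xb_{k+1}$ must be given by an explicit arithmetical formula in the code for $\fb_k$, so that no genuine choice is invoked. Once the construction is made explicitly uniform in this way, the remaining verifications are routine first-order reasoning about the field operations, carried out in the classical fragment available for arithmetical formulas.
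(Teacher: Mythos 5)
Your proposal is correct in substance and its punchline is exactly the paper's: reduce membership in the generated subfield to a condition of the form $(\exists n)(X \equiv \fb(n))$, obtain excluded middle for it by numerical omniscience, and conclude existence by third order comprehension (the paper explicitly flags this combination as the point of the lemma). But your construction of the enumeration takes a genuinely different route. The paper never iterates one-step closures: it enumerates the formal \emph{words} in elements of the generating set, proves by induction on word length that ``evaluating $w$ in $\Fb$ produces $X$, up to identity'' satisfies excluded middle, and obtains the sequence of word-values by comprehension, with no appeal to dependent choice. You instead realize the subfield as a union of stages $\Xb_0 \subseteq \Xb_1 \subseteq \cdots$ and use the recursion axiom (8) to string together the stage enumerations; your insistence that the passage from $\fb_k$ to $\fb_{k+1}$ be made uniform is exactly the right reading of axiom (8) under the intuitionistic interpretation discussed in Section 2.3. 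What the paper's route buys is economy of axioms (induction plus comprehension in place of dependent choice); what your route buys is freedom from formalizing and inducting on a syntax of words. One repair is needed in your version: the one-step closure is not ``definable from $\fb_k$ by arithmetical comprehension.'' The operations of an abstract field $\Fb$ are third order function objects whose values are determined only up to the identity $\equiv$, so ``$b$ belongs to the value of $\fb_+(X_{(m)},X_{(n)})$'' involves a second order existential quantifier and depends on the choice of representative; assembling representatives of all these values into a single second order code is itself an application of axiom (8) (or of comprehension applied to a formula whose excluded middle must first be proved --- the role played in the paper by the induction on word length), not of arithmetical comprehension. Since you are already invoking dependent choice, this is a patchable imprecision rather than a fatal gap; note also that your final condition should read $(\exists n)(\fb(n) \equiv Y)$ rather than $=$, so that the resulting subfield is compatible with the identity on $\Fb$ and hence is a subset in the sense of Definition 3.3.
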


\begin{proof}
Let $S \subseteq \Fb$ be countable. The lemma does not follow from
arithmetical comprehension; we must prove directly that the condition
``$X$ is in the subfield generated by $S$'' is expressible in a way
that satisfies the law of excluded middle, so that third order
comprehension can be used. Informally, we expand the above
to ``there is a word in elements of $S$ which when evaluated
in $\Fb$ produces $X$, up to identity.'' Next we use induction
on word length to check that for any word $w$ the statement
``evaluating $w$ in $\Fb$ produces $X$, up to identity'' satisfies
excluded middle. Enumerating the
words as $(w_n)$, second order comprehension then verifies the existence
of a sequence $(X_n)$ such that $X_n$ is the result of evaluating $w_n$
in $\Fb$ (or $X_n = 0$ if $w_n$ does not evaluate). Finally, numerical
omniscience implies the law of excluded
middle for the assertion $(\exists n)(X \equiv X_n)$. Together with third
order comprehension, this shows that the subfield of $\Fb$ generated
by $S$ exists.
\end{proof}

The same argument will apply in more general algebraic settings to show
the existence of countably generated subobjects.

The preceding proof is a good illustration of the technique of combining
numerical omniscience with comprehension for existence results. This will
be used repeatedly in the sequel. The general principle is: if we can
test whether $\phi(X)$ holds in countably many steps then $\{X: \phi(X)\}$
exists.

\begin{theo}
$\Rb$ is Cauchy complete.
\end{theo}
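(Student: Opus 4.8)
The plan is to construct the limit as a $\limsup$, using the sequential completeness of $\Rb$ just established as the only nontrivial analytic input. Call $(X_n)$ Cauchy if for every $k$ there is $M$ with $|X_m - X_n| < 1/(k+1)$ whenever $m, n \geq M$. Read intuitionistically, this premise supplies a \emph{modulus}, a function $k \mapsto M(k)$ witnessing the inner existential uniformly; equivalently, dependent choice (axiom (8)) packages the witnesses into a sequence, and I will use this modulus freely. First I would instantiate it at $k = 0$ to obtain a rational bound: all $X_n$ with $n \geq M(0)$ lie within $1$ of $X_{M(0)}$, and the finitely many earlier terms are bounded by inspection, so the whole sequence lies in some rational interval $[-B, B]$.

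Next I would form the tail suprema. For each $n$ the set $\bigcup_{k \geq n} X_k \subseteq Q$ exists by arithmetical comprehension, and one checks directly that it is a Dedekind cut bounded above by $B$ (it is downward closed, nonempty, proper, and has no greatest element because none of the $X_k$ does); by sequential completeness it is therefore the real $Y_n = \sup_{k \geq n} X_k \in \Rb$. Assembling these into a single sequence $(Y_n)$, again by arithmetical comprehension on the coding of sequences, I note that $(Y_n)$ is decreasing, since the tails are nested, and bounded below by $-B$. Applying sequential completeness to the increasing, bounded-above sequence $(-Y_n)$ produces $\sup_n(-Y_n)$, and I set $Y = \limsup_n X_n$ to be its negative, so that $Y = \inf_n Y_n$.

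I then claim $X_n \to Y$. Given $k$, put $\epsilon = 1/(k+1)$ and choose, via the modulus, an $N$ with $|X_m - X_n| < \epsilon/2$ for all $m, n \geq N$. For $n \geq N$ the tail $\{X_j : j \geq n\}$ lies in $(X_N - \epsilon/2,\, X_N + \epsilon/2)$, whence $X_n \leq Y_n \leq X_N + \epsilon/2$ while $Y_n \geq X_n > X_N - \epsilon/2$. Since the $Y_m$ are decreasing with $Y_m \geq X_N - \epsilon/2$ for every $m$, the infimum satisfies $Y \geq X_N - \epsilon/2$, and also $Y \leq Y_N \leq X_N + \epsilon/2$; thus $|Y - X_N| \leq \epsilon/2$. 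Combining this with $|X_n - X_N| < \epsilon/2$ yields $|Y - X_n| < \epsilon$ for all $n \geq N$, which is exactly convergence to $Y$.

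The delicate point, and the step I would scrutinize most, is the intuitionistic legitimacy of the construction rather than the inequalities. Three things must be checked: that the Cauchy hypothesis really yields a usable modulus $M(k)$ (this is where the uniform, intuitionistic reading of $\forall k\,\exists M$, or dependent choice, is essential, since a merely classical $\exists M$ would not be extractable); that forming each $Y_n$ and then the whole sequence $(Y_n)$ requires only explicit unions and arithmetical comprehension, with no appeal to excluded middle for real comparisons; and that the final estimates reduce to comparisons of cuts with rationals, where the order relation is arithmetically defined and excluded middle is available through numerical omniscience. The convergence of the decreasing $(Y_n)$ to its infimum is used only implicitly, and where needed it reduces to the fact that any rational below the cut $Y$ lies in one of the defining sets, located by a decidable search; no genuinely nonconstructive choice enters.
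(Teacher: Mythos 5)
Your proof is correct, and it is in the same constructive spirit as the paper's (build the limit explicitly as a set of rationals, with existence coming from arithmetical comprehension and no excluded middle beyond the arithmetic level), but the implementation is genuinely different. The paper's entire proof is the single formula $\liminf X_n = \bigcup_{n \in N} \bigcap_{k \geq n} (X_k - 1/n)$, with the cuts taken literally; the $-1/n$ shift is doing precisely the work you route around, since a bare intersection $\bigcap_{k \geq n} X_k$ of cuts can acquire a greatest element (take $X_k$ to be the cut of $1/k$) and so fail to be a cut, while the shift-then-union produces a genuine cut that, for a Cauchy sequence, equals the limit. You instead take the dual path: tail suprema as unions, which are automatically cuts with no shift needed --- this is exactly the sequential-completeness observation the paper records just before the theorem --- then an infimum obtained by negating and taking another supremum, and finally an explicit $\epsilon/2$ verification of convergence. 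What the paper's formula buys is brevity: one line, one application of comprehension, with the verification that the formula is a cut and is the limit left to the reader. What your route buys is that each individual step (a bounded union of cuts is a cut, negation is an arithmetically definable order-reversing involution) is easy to check, at the price of the boundedness argument, the negation detour, and the closing estimates. One small correction: no modulus and no appeal to dependent choice is needed anywhere. Since the statement to be proved, $\forall k\, \exists N\, \forall n \geq N\, (|Y - X_n| < 1/(k+1))$, is itself of $\forall\exists$ form, ordinary intuitionistic existential instantiation of the Cauchy premise at each $k$ suffices; invoking axiom (8) is harmless in CM but superfluous here.
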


\begin{proof}
This follows from the formula
$$\liminf X_n = \bigcup_{n \in N} \bigcap_{k \geq n} (X_k - 1/n),$$
where $X_k - 1/n \in \Rb$ is taken literally as a Dedekind cut.
\end{proof}

\begin{prop}
Let $\Xb \subseteq \Rb$. Then any sequence of functions
$\fb_n: \Xb \to \Rb$ which converges pointwise has a pointwise
limit $\fb: \Xb \to \Rb$.
\end{prop}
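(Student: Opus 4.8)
The plan is to produce $\fb$ by third order comprehension as the graph
$$\fb = \{\langle X,Y\rangle \in \Xb\times\Rb : Y = \textstyle\lim_n \fb_n(X)\},$$
so the real content is to check that the limit condition satisfies the law of excluded middle. To begin I would package the hypothesis correctly: by Definition 3.8 a sequence of functions $\fb_n \colon \Xb\to\Rb$ is a single third order object $\Fb \subseteq N\times(\Xb\times\Rb)$ with $\Fb_{(n)} = \fb_n$, so that the atomic formula $\langle n,X,Y\rangle\in\Fb$ expresses $\fb_n(X)=Y$. A direct assault on decidability runs into the characteristic obstruction of CM: the assertion ``$q\in\fb_n(X)$'' unwinds to $(\exists Z)(\langle n,X,Z\rangle\in\Fb\wedge q\in Z)$, a second order existential which is not arithmetical, so the individual values $\fb_n(X)$ cannot be recovered by comprehension alone.

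The key step --- and the one I expect to be the main obstacle --- is to realize the sequence of values $(\fb_n(X))_n$ as a bona fide second order object; this is precisely the job of dependent choice. Fix $X\in\Xb$. Totality of the $\fb_n$ gives $(\forall n)(\exists Z)(\langle n,X,Z\rangle\in\Fb)$, and applying the recursion axiom (8) (taking for $\phi(n,X',Z)$ the formula $\langle n,X,Z\rangle\in\Fb$, which simply ignores its middle argument) yields, after a harmless shift of index by arithmetical comprehension, a sequence of reals $V$ with $V_{(n)}=\fb_n(X)$ for every $n$. The gain is decisive: once $V$ exists as an object, ``$q\in V_{(n)}$'' is just the atomic formula $\langle n,q\rangle\in V$, so every statement about the values has become decidable.

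With $V$ available I would invoke the hypothesis of pointwise convergence. For this $X$ the sequence $(\fb_n(X))_n$ converges, hence is Cauchy, so by Theorem 3.12 the real
$$L = \bigcup_{m}\,\bigcap_{k\geq m}\bigl(V_{(k)} - 1/m\bigr)$$
exists and equals $\lim_n \fb_n(X)$. Consequently the limit condition ``$Y=\lim_n\fb_n(X)$'' is equivalent to the equality of reals ``$Y=L$'', i.e.\ to $(\forall q)(q\in Y \leftrightarrow q\in L)$, which is arithmetical and so decidable by numerical omniscience; the preliminary test ``$W\in\Xb\times\Rb$'' is likewise a decidable combination of atomic and arithmetical conditions. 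It is worth stressing that this whole argument is carried out inside the proof of $(\forall W)(\phi(W)\vee\neg\phi(W))$ with $X=W_{(0)}$ held fixed, so the pointwise appeals to axiom (8) and to Theorem 3.12 are entirely legitimate.

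Having shown that the defining condition satisfies excluded middle, third order comprehension (axiom (10)) delivers the object $\fb$, and it remains only to confirm that it is the desired function. Totality holds because for each $X\in\Xb$ the witness $L$ constructed above satisfies $\langle X,L\rangle\in\fb$; single-valuedness holds because any two admissible values both equal $L$; containment in $\Xb\times\Rb$ and compatibility with the identities are routine, the latter because the identity on $\Rb$ is trivial, exactly as in Proposition 3.4. Thus $\fb\colon\Xb\to\Rb$ exists and is by construction the pointwise limit of the $\fb_n$.
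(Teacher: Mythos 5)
Your proof is correct, and it supplies real content that the paper compresses: the paper's entire proof of this proposition is the single phrase ``by arithmetical comprehension,'' which glosses over precisely the obstruction you identify, namely that the values $\fb_n(X)$ are accessible from the sequence object $\Fb$ only through a second order existential quantifier, so the defining formula of the graph is not literally arithmetical. Where you diverge is in how you resolve this. You materialize the value sequence $(\fb_n(X))_n$ as a second order object $V$ by invoking the recursion axiom (8); once $V$ exists, the limit condition has no second or third order quantifiers, so excluded middle is immediate from CM's axiom for such formulas, and third order comprehension finishes the job. This works, but dependent choice is a heavier tool than needed, since no choice is actually being made: each value exists and is unique up to the trivial identity on $\Rb$, so the decidability premise of second order comprehension (axiom (9)) for the formula $(\exists Z)(\langle n,\langle X,Z\rangle\rangle\in\Fb \wedge q\in Z)$ can be verified pointwise exactly as in Proposition 3.4 (produce the unique witness $Z_n$, after which the condition reduces to the atomic $q\in Z_n$), and $V$ then exists by comprehension alone. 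Alternatively one can dispense with $V$ entirely and decide the limit condition directly by that same witness-extraction argument combined with iterated numerical omniscience (Theorem 2.2) over the quantifiers on $n$, $m$, and $\epsilon$; one of these lighter routes is presumably what the paper's one-line proof intends. Your version buys something in return: with $V$ in hand the condition becomes genuinely arithmetical, so no appeal to numerical omniscience is needed at all, and the connection to Theorem 3.12 (the $\liminf$ formula) is made explicit. Both arguments are sound; yours is an honest elaboration with one avoidable appeal to axiom (8).
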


\begin{proof}
By arithmetical comprehension.
\end{proof}

Polynomial functions from $\Rb$ to itself are arithmetically
definable. It easily follows that all of the standard continuous functions
from real analysis ($\sin x$, $\cos x$, $e^x$, $\ln x$, etc.) may
be defined. Standard discontinuous functions such as the Heaviside step
function, the comb function, and the characteristic function of the
Cantor set are also straightforwardly definable.

\subsection{Topology in $\Rb$}

Let $Q^+ = \{p \in Q: p > 0\}$ and $\Rb^+ = \{x \in \Rb: x > 0\}$.
(We will now start using lowercase letters to denote elements of $\Rb$.)

\begin{defi}
A subset $\Ub \subseteq \Rb$ is {\it open} if there is a set
$P \subseteq Q \times Q^+$ such that $x \in \Ub$ if and only if
$|p - x| < r$ for some $\langle p,r \rangle \in P$. We call $P$ a
{\it witness} for $\Ub$.

An {\it open ball} in $\Rb$ is a set of the form
$${\bf ball}_r(x) = \{y \in \Rb: |x - y| < r\}$$
for some $\langle x,r\rangle \in \Rb \times \Rb^+$.
\end{defi}

\begin{prop}
(a) Open balls are open.

(b) The union of any sequence of open subsets of $\Rb$ is open.

(c) The intersection of any finitely many open subsets of $\Rb$ is open.
\end{prop}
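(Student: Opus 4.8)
The plan is to construct an explicit rational witness in each case and then invoke comprehension, the recurring subtlety being whether the defining condition of the witness satisfies excluded middle. For (a), given $\langle x,r\rangle \in \Rb \times \Rb^+$ I would take as witness
$$P = \{\langle p,s\rangle \in Q \times Q^+ : x - r < p - s \text{ and } p + s < x + r\},$$
the set of rational balls $(p-s,p+s)$ sitting inside $(x-r,x+r) = {\bf ball}_r(x)$. The key point is that although $x$ and $r$ are genuine reals, each clause compares a rational with a real: $p+s < x+r$ reduces to the atomic statement that the rational $p+s$ lies in the Dedekind cut $x+r$, while $x-r < p-s$ holds iff there is a rational $d$ with $x-r \leq d < p-s$, an existential over $\tilde{Q}$ whose matrix is decidable (the clause $x-r \leq d$ is the negation of the atomic relation $d < x-r$), and hence is decidable by numerical omniscience. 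Thus the membership condition satisfies excluded middle and $P$ exists by comprehension. The equivalence $y \in {\bf ball}_r(x) \Leftrightarrow (\exists \langle p,s\rangle \in P)(|p-y|<s)$ is then a routine density argument: the backward direction is the containment ${\bf ball}_s(p) \subseteq {\bf ball}_r(x)$, and for the forward direction one chooses a positive rational $s$ with $2s < \min(y-(x-r),(x+r)-y)$ and a rational $p$ with $|p-y| < s$.

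Before treating (b) and (c) I would record that a witness may always be taken to be a \emph{second order} object: since third order membership $X \in \Xb$ is decidable, the rational trace of any witness $P \subseteq Q \times Q^+$ is a set of natural numbers (coding pairs of rationals) obtained by second order comprehension, and it witnesses the same open set. This matters because sequences of second order objects are available in CM whereas sequences of third order objects are not. For (b), a sequence of open sets is a subset $\Yb \subseteq N \times \Rb$ each of whose slices $\Yb_{(n)}$ is open; unwinding this says that for each $n$ there is a second order witness $P_n$ for $\Yb_{(n)}$. I would apply dependent choice (axiom (8)) to this family of existence statements to extract a single sequence $(P_n)$ of second order witnesses, form the union $P = \bigcup_n P_n$ (which exists by arithmetical comprehension), and check that $P$ witnesses $\bigcup_n \Yb_{(n)}$: a point is covered by a ball from $P$ iff it is covered by a ball from some $P_n$ iff it lies in some $\Yb_{(n)}$.

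For (c) it suffices, by induction on the number of sets, to intersect two open sets $\Ub,\Vb$ with second order witnesses $P,P'$. Here I would put
$$P'' = \{\langle q,s\rangle : (\exists \langle p,r\rangle \in P)(\exists \langle p',r'\rangle \in P')(|q-p|+s \leq r \,\wedge\, |q-p'|+s \leq r')\},$$
so that ${\bf ball}_s(q)$ is contained in both ${\bf ball}_r(p)$ and ${\bf ball}_{r'}(p')$. Because $P,P'$ are rational, every comparison here is between rationals and so is outright decidable, and the two existentials range over $\tilde{Q}$, so numerical omniscience again makes the condition satisfy excluded middle and $P''$ exists by comprehension. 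The verification that $P''$ witnesses $\Ub \cap \Vb$ is the same density estimate as in (a).

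I expect the main obstacle to be the choice step in (b). Unlike (a) and (c), where the witness is built directly and the only issue is decidability of its defining condition, in (b) the hypothesis supplies merely that each slice is open, i.e. a bare existential claim about witnesses; passing to an actual sequence of witnesses is exactly where dependent choice is needed (the step would be immediate only if one regarded a sequence of open sets as presented together with its witnesses). The reduction to second order witnesses is what makes this choice legitimately available, since sequences of third order objects are not at our disposal.
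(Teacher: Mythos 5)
Your proposal is correct and follows the paper's own route: the paper likewise dismisses (a) and (c) as routine witness constructions and identifies the single substantive point as part (b), where dependent choice (axiom (8)) is used to extract a sequence of witnesses $(P_n)$ whose union then witnesses $\bigcup \Ub_n$. Your additional observation that witnesses can be coded as second order objects (via the identification of $Q$ with $\tilde{Q}$) correctly makes explicit why the dependent choice axiom, which is stated for second order variables, is applicable here -- a point the paper leaves implicit in its convention of identifying $Q$ with $\tilde{Q}$ ``without comment.''
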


\begin{proof}
Parts (a) and (c) are routine. In part (b), given a sequence $(\Ub_n)$
of open sets we need to use dependent choice to select a sequence of
witnesses $(P_n)$; the union of this sequence is then a witness for
$\bigcup \Ub_n$.
\end{proof}

\begin{defi}
The {\it closure} of a countable set $C \subset \Rb$ is the set
$$\{x \in \Rb:\hbox{ for every }n\hbox{ there
exists }y \in C\hbox{ such that }|x - y| < 1/n\}.$$
A subset $\Cb$ of $\Rb$ is {\it closed} if it is the closure of a
countable set $C \subset \Rb$; we call $C$ a {\it witness} for $\Cb$.
\end{defi}

In the definition of closure, observe that for any $x$ the existence,
for every $n \in N$, of a point $y \in C$ such that $|x - y| < 1/n$
satisfies the law of excluded middle by numerical omniscience, because
$N$ and $C$ are countable. Thus the closure of $C$ exists by third
order comprehension. This technique was already introduced in the
proof of Lemma 3.11 and from now on we will use it without comment.

\begin{prop}
Closed subsets of $\Rb$ are sequentially closed (i.e., contain all
limits of Cauchy sequences).
\end{prop}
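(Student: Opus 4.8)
The plan is to unwind Definition 3.14 and run the standard $\varepsilon/2$ triangle-inequality argument, taking care that every existential witness is produced constructively. Write the given closed set as $\Cb = \overline{C}$ for a countable witness $C \subset \Rb$, so that
$$x \in \Cb \iff (\forall n)(\exists y \in C)(|x - y| < 1/n).$$
Let $(x_k)$ be a Cauchy sequence with every $x_k \in \Cb$, and let $x \in \Rb$ be its limit, which exists by Theorem 3.13 (Cauchy completeness). I must show $x \in \Cb$, that is, that for each $n$ I can exhibit some $y \in C$ with $|x - y| < 1/n$.

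First I would fix $n \in N$ and use the modulus of the Cauchy sequence to locate an index $K$ with $|x_K - x| < 1/(2n)$. Since $x_K \in \Cb$, instantiating the universal quantifier in the defining condition at $2n$ produces a point $y \in C$ with $|x_K - y| < 1/(2n)$. The triangle inequality in the ordered field $\Rb$ then gives
$$|x - y| \le |x - x_K| + |x_K - y| < \frac{1}{2n} + \frac{1}{2n} = \frac{1}{n},$$
so $y$ is the required witness. As $n$ was arbitrary and the construction of $y$ is uniform in $n$, this establishes $(\forall n)(\exists y \in C)(|x - y| < 1/n)$, i.e.\ $x \in \Cb$.

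The step I expect to require the most care is not the inequality manipulation but the constructive bookkeeping: I must actually produce the witnesses $K$ and $y$, not merely assert their existence. This is unproblematic here because every comparison that appears is arithmetical. Comparing the Dedekind cut $|x - y|$ to the rational $1/n$ is a formula with no second or third order quantifiers, so it satisfies excluded middle outright; and composing the enumeration of $C$ by a surjection from $N$ with this comparison exhibits $(\exists y \in C)(|x_K - y| < 1/(2n))$ as a statement whose truth --- already known from $x_K \in \Cb$ --- lets us locate $y$ by a terminating numerical search, in the manner of Lemma 3.11. The index $K$ is computed directly from $n$ once the Cauchy sequence is presented with a modulus of convergence, as is standard in this setting, so no nonconstructive choice principle is invoked at any point.
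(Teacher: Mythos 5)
Your proof is correct and is essentially the paper's own argument: both exploit the countable witness $C$, locate for a suitable term of the Cauchy sequence a nearby point of $C$ by a terminating numerical search (legitimate because the comparisons are arithmetical and so satisfy excluded middle), and conclude by the triangle inequality that the limit satisfies the defining condition of the closure --- the paper merely packages these witnesses as an auxiliary Cauchy sequence in $C$ converging to the same limit, whereas you verify the membership condition at the limit directly, a purely cosmetic difference. Only your cross-references are off: closure and closedness are Definition 3.16 and Cauchy completeness is Theorem 3.12, not Definition 3.14 and Theorem 3.13.
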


\begin{proof}
Let $\Cb$ be the closure of a countable set $C$ and enumerate the
elements of $C$ as $(x_k)$. Now let $(y_n)$ be a Cauchy sequence in
$\Cb$. For each $n$ let $k_n$ be the smallest index such that
$|y_n - x_{k_n}| < 1/n$; then the sequence $(x_{k_n})$ is Cauchy
and converges to the same limit as $(y_n)$, hence that limit belongs
to $\Cb$.
\end{proof}

\begin{theo}
A subset of $\Rb$ is closed if and only if its complement is open.
\end{theo}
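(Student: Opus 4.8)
The plan is to prove the two implications separately, in each case reducing the topological content to an application of numerical omniscience together with a comprehension axiom.

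For the forward direction, suppose $\Cb$ is the closure of a countable set $C$, enumerated as $(x_k)$. First I would record that membership in $\Cb$ is decidable, since for fixed $x$ the defining condition of the closure satisfies excluded middle by numerical omniscience (as observed just after Definition 3.16); the same argument shows $x \notin \Cb$ is equivalent to $(\exists n)(\forall k)(|x - x_k| \geq 1/n)$, so the complement exists by third order comprehension. The witness for the complement is then $P = \{\langle p,r\rangle \in Q \times Q^+ : (\forall k)(|p - x_k| \geq 2r)\}$, which exists by arithmetical comprehension because its defining condition is again decidable by numerical omniscience. It remains to check $x \notin \Cb$ if and only if $|p-x| < r$ for some $\langle p,r\rangle \in P$: if $x$ lies in such a ball the triangle inequality forces $|x - x_k| > r$ for every $k$, so $x \notin \Cb$; conversely, given $x$ at distance $\geq 1/n$ from all of $C$, a rational $p$ with $|p - x| < 1/(10n)$ together with $r = 1/(4n)$ furnishes the required pair. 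These are routine estimates.

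For the converse, let $\Ub = \Rb \setminus \Cb$ be open with witness $P \subseteq Q \times Q^+$. Since $P$ is a subset of the countable set $Q \times Q^+$ with decidable membership, it is itself countable; enumerate the corresponding balls $B_k = \{y : |y - p_k| < r_k\}$. Then $\Cb = \{x : (\forall k)(|x - p_k| \geq r_k)\}$ has decidable membership, and it is sequentially closed by a one-line argument (from $|x_m - p_k| \geq r_k$ and $x_m \to x$ we get $|x - p_k| \geq r_k$). The task is to produce a countable $C \subseteq \Cb$ whose closure is $\Cb$. The device I would use to stay within arithmetical comprehension is a connectivity relation on the balls: call $B_j$ and $B_k$ adjacent if $|p_j - p_k| < r_j + r_k$, and say $B_k$ is reachable from $q \in Q \cap \Ub$ if there is a finite chain of pairwise adjacent balls running from a ball containing $q$ to $B_k$. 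Reachability is arithmetical (an existential quantifier over codes of finite chains), hence decidable by numerical omniscience, and the balls reachable from $q$ are exactly those constituting the connected component of $\Ub$ containing $q$. I would then define that component's endpoints $a(q) = \inf\{p_k - r_k : B_k \text{ reachable from } q\}$ and $b(q) = \sup\{p_k + r_k : B_k \text{ reachable from } q\}$, each constructed directly as a Dedekind cut by arithmetical comprehension whenever the component is bounded on the relevant side. A short argument gives $a(q), b(q) \in \Cb$: were $b(q)$ in some $B_j$, a reachable ball with right endpoint near $b(q)$ would be adjacent to $B_j$, making $B_j$ reachable and forcing $b(q) \geq p_j + r_j$, a contradiction. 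The set I propose is $C = (Q \cap \Cb) \cup \{a(q), b(q) : q \in Q \cap \Ub\}$, taking finite endpoints only; it is countable and exists as the image of a function from $N$ by Proposition 3.9.

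Finally I would verify $\overline{C} = \Cb$. The inclusion $\overline{C} \subseteq \Cb$ is immediate, since $C \subseteq \Cb$ and $\Cb$ is sequentially closed. For density, given $x \in \Cb$ and a positive rational $\epsilon$, numerical omniscience makes the statement $(\exists q \in Q)(x < q < x + \epsilon \wedge q \in \Cb)$ decidable: if it holds, that rational lies in $C$ within $\epsilon$ of $x$; if it fails, every rational in $(x, x+\epsilon)$ lies in $\Ub$, so choosing $q \in (x, x + \epsilon/2) \cap Q$ the left endpoint $a(q)$ of its component satisfies $x \leq a(q) < q$ (as $x \notin \Ub$ blocks the component from extending past $x$), whence $a(q) \in C$ lies within $\epsilon$ of $x$. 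I expect the main obstacle to be exactly this backward direction — producing the isolated and irrational boundary points of $\Cb$ (such as the single point $\sqrt{2}$) as legitimate reals with a decidable membership condition for $C$, all without invoking the Heine–Borel property, which is not yet available. The reachability relation is the key to keeping the entire construction arithmetical and hence within the reach of the comprehension and numerical-omniscience principles of CM; the degenerate cases $\Cb = \emptyset$ and $\Cb = \Rb$ are disposed of separately by convention.
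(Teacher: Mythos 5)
Your proposal is correct and follows essentially the same route as the paper: the forward direction uses rational balls uniformly separated from the countable witness $C$, and the backward direction builds the witness for $\Cb$ from the rationals lying in $\Cb$ together with component endpoints obtained via finite chains of overlapping balls from $P$ --- your ``reachability'' relation is exactly the paper's sets $S_p$, and your $b(q)$ (resp.\ $a(q)$) is the paper's $x_p = \sup S_p$. The only differences are cosmetic: the paper gets by with right endpoints alone (approaching each point of $\Cb$ from the left by rationals in $\Ub$), while you carry both endpoints and spell out the decidability bookkeeping that the paper leaves as ``one easily checks.''
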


\begin{proof}
The forward direction is easy: given a witness $C$ for a closed set, the
set of pairs $\langle p,r\rangle \in Q \times Q^+$
such that $|p - x| \geq r$ for all $x \in C$ is
a witness for the complementary open set. For the reverse direction,
given a witness $P$ for an open set $\Ub$ we construct a witness $C$
for the complementary closed set $\Cb$ as follows. First let $C_0$ be
the set of rationals in $\Rb - \Ub$, i.e., the set of $p' \in Q$ such
that $|p' - p| \geq r$ for all $\langle p,r\rangle \in P$.
Then for each rational $p \in \Ub$
let $S_p$ be the set of $p' \in Q$ such that there is a finite
sequence of elements $\langle p_i, r_i \rangle \in P$, $1 \leq i \leq n$,
with $p \in {\bf ball}_{r_1}(p_1)$, $p' \in {\bf ball}_{r_n}(p_n)$, and
${\bf ball}_{r_i}(p_i) \cap {\bf ball}_{r_{i+1}}(p_{i+1}) \neq \emptyset$ for
$1 \leq i < n$. If $S_p$ is bounded above then let $x_p$ be its least
upper bound; thus $x_p$ is the smallest element of $\Cb$ that is
greater than $p$. Finally, let $C_1$ be the set of all such elements $x_p$
and let $C = C_0 \cup C_1$. One easily checks that $\Cb$ is the closure of
$C$.
\end{proof}

\begin{coro}
(a) The intersection of any sequence of closed subsets of $\Rb$ is closed.

(b) The union of any finitely many closed subsets of $\Rb$ is closed.
\end{coro}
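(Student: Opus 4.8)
The plan is to derive both parts of the corollary from the preceding Theorem 3.17 (a subset of $\Rb$ is closed if and only if its complement is open) by dualizing the corresponding closure properties of open sets established in Proposition 3.14. The key observation is that complementation converts intersections into unions and vice versa, so part (a) should follow from Proposition 3.14(b) (the union of a sequence of open sets is open) and part (b) should follow from Proposition 3.14(c) (the intersection of finitely many open sets is open).

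For part (a), let $(\Cb_n)$ be a sequence of closed subsets of $\Rb$. By Theorem 3.17 each complement $\Rb - \Cb_n$ is open. First I would check that these complements form a genuine sequence of open sets in the sense required by Proposition 3.14(b); this is where I expect the only real subtlety to arise, since I must produce the sequence of witnesses effectively. Each $\Cb_n$ comes with a countable witness $C_n$, and the forward direction of the proof of Theorem 3.17 gives an explicit witness for $\Rb - \Cb_n$, namely the set of pairs $\langle p,r\rangle \in Q \times Q^+$ with $|p - x| \geq r$ for all $x \in C_n$. I would use dependent choice (axiom (8)), exactly as in the proof of Proposition 3.14(b), to assemble these into a single sequence of witnesses, so that Proposition 3.14(b) applies and $\bigcup_n (\Rb - \Cb_n)$ is open. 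Since this union equals $\Rb - \bigcap_n \Cb_n$, Theorem 3.17 then shows $\bigcap_n \Cb_n$ is closed.

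For part (b), let $\Cb_1, \ldots, \Cb_k$ be finitely many closed sets. By Theorem 3.17 each complement is open, and by Proposition 3.14(c) the intersection $\bigcap_{i=1}^k (\Rb - \Cb_i)$ is open. Because there are only finitely many sets, no choice principle is needed here; one simply applies part (c) repeatedly. This intersection equals $\Rb - \bigcup_{i=1}^k \Cb_i$, so Theorem 3.17 gives that $\bigcup_{i=1}^k \Cb_i$ is closed.

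The main obstacle, such as it is, lies entirely in part (a): I must be careful that the passage from a sequence of closed sets to a sequence of their witnessing open sets can be carried out by an explicit construction rather than an appeal to choice in the classical sense, so that dependent choice legitimately furnishes the sequence of witnesses needed to invoke Proposition 3.14(b). Part (b), involving only finitely many sets, is routine and requires no such care. Both De Morgan identities relating complements of unions and intersections hold here because excluded middle is available for the relevant membership statements, so the set-theoretic bookkeeping underlying both parts is unproblematic.
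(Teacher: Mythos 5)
Your proof is correct and is exactly the intended argument: the paper states this corollary without proof immediately after Theorem 3.17, clearly meaning for it to follow by the dualization of Proposition 3.14 that you carry out. Your added care about producing the sequence of witnesses via dependent choice (as in Proposition 3.14(b)) and about the validity of the De Morgan identities (excluded middle for the membership statements, together with numerical omniscience in the countable case) matches the level of rigor the paper itself exercises in the surrounding results.
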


\begin{prop}
Let $\Xb \subseteq \Rb$ and let $X \subseteq \Xb$ be countable.
Then every open ball about any point in $\Xb$ intersects $X$ if and
only if $\Xb$ is contained in the closure of $X$.
\end{prop}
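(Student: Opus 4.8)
The plan is to unfold both sides of the biconditional into explicit quantified statements and observe that the only gap between them is the passage from reciprocals $1/n$ of natural numbers to arbitrary positive reals $r$, a gap that is closed by the Archimedean property of $\Rb$. First I would recall that since $X$ is countable its closure exists (by the observation following Definition 3.17), and by that definition it is exactly $\{x \in \Rb : (\forall n)(\exists y \in X)\, |x-y| < 1/n\}$. Thus the statement ``$\Xb$ is contained in the closure of $X$'' unfolds to: for every $x \in \Xb$ and every $n$, some $y \in X$ satisfies $|x - y| < 1/n$. On the other side, by the definition of ${\bf ball}_r(x)$, the statement ``every open ball about any point of $\Xb$ intersects $X$'' unfolds to: for every $x \in \Xb$ and every $r \in \Rb^+$, some $y \in X$ satisfies $|x - y| < r$.

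For the forward direction I would assume the ball condition, fix $x \in \Xb$ and an arbitrary $n$, and apply the hypothesis to the particular ball ${\bf ball}_{1/n}(x)$, which yields $y \in X$ with $|x - y| < 1/n$; since $n$ was arbitrary this places $x$ in the closure of $X$. For the reverse direction I would assume $\Xb$ lies in the closure, fix $x \in \Xb$ and $r \in \Rb^+$, use the Archimedean property of $\Rb$ to pick $n$ with $1/n \leq r$, and then extract from closure membership a point $y \in X$ with $|x - y| < 1/n \leq r$, so that ${\bf ball}_r(x)$ meets $X$.

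There is no substantive obstacle here: the argument is essentially a cofinality remark, and it is entirely constructive. In particular no delicate intuitionistic bookkeeping is needed, since the predicate $|x - y| < r$ is arithmetical and hence decidable, so each ``$(\exists y \in X)$'' ranges over a countable set under a decidable condition and the quantifier manipulations are unproblematic. The only appeal to numerical omniscience is the implicit one already absorbed into the prior fact, recorded after Definition 3.17, that the closure of a countable set exists as a subset of $\Rb$.
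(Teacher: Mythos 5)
Your proof is correct: the paper states this proposition without any proof, treating it as routine, and your argument --- unfolding ``$\Xb$ is contained in the closure of $X$'' into the $\forall n\,\exists y$ form of Definition 3.16, unfolding the ball condition, and bridging the gap between radii $1/n$ and arbitrary $r \in \Rb^+$ via the Archimedean property, with excluded middle for the arithmetical relation $|x-y|<r$ making the quantifier work unproblematic --- is exactly the intended verification. The only slip is a citation off by one: the definition of closure and the remark that the closure of a countable set exists (by numerical omniscience and third order comprehension) follow Definition 3.16, not 3.17.
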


\begin{defi}
We say that $\Xb \subseteq \Rb$ is {\it separable} if it has a
countable subset $X$ such that either of the two equivalent conditions
in Proposition 3.20 is satisfied. We say that $X$ is {\it dense} in $\Xb$.
\end{defi}

\begin{lemma}
Every open subset of $\Rb$ is separable, as is every closed subset of $\Rb$.
\end{lemma}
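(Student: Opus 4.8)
The plan is to handle the two cases separately and in each case exhibit an explicit countable dense subset, then conclude via Definition 3.21 using the equivalence in Proposition 3.20.

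The closed case is almost immediate. If $\Cb$ is closed then by Definition 3.17 it is the closure of some countable set $C$, and $C$ is the natural candidate for a dense subset. First I would check $C \subseteq \Cb$: each $y \in C$ satisfies the defining condition of the closure trivially, since for every $n$ we may take $y$ itself as the required point of $C$, giving $|y - y| = 0 < 1/n$. Since $\Cb$ is by construction equal to the closure of $C$, it is in particular contained in the closure of $C$, so Proposition 3.20 tells us that $C$ is dense in $\Cb$. As $C$ is countable, $\Cb$ is separable. Note there is no empty-set issue here: a closed set is the closure of a countable, hence nonempty, set $C$, so $\Cb \supseteq C$ is nonempty.

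The open case is the one requiring real work, and the dense set I would use is the set $D$ of rational points of $\Ub$. The key step is to show that membership in $\Ub$ is decidable for rationals. Fix a witness $P \subseteq Q \times Q^+$ for $\Ub$ and enumerate the countable set $Q \times Q^+$ as $(\langle q_n, r_n\rangle)$. For a rational $p$ we have $p \in \Ub$ iff there is some $n$ with $\langle q_n, r_n\rangle \in P$ and $|q_n - p| < r_n$. For each fixed $n$ this conjunction is decidable: the first conjunct is an atomic third order membership statement, for which excluded middle holds, and the second is an arithmetical statement about rationals. Numerical omniscience then yields excluded middle for the existential over $n$, so $p \in \Ub$ is decidable, and $D = \{p \in Q : p \in \Ub\}$ exists by third order comprehension. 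Using numerical omniscience over an enumeration of $Q$ I can decide whether $D$ is nonempty; if it is empty then $\Ub$ is empty as well (a nonempty open set contains the center $q$ of any witnessing pair $\langle q,r\rangle$, since $|q - q| = 0 < r$), and this case is trivial. If $D$ is nonempty then, being a subset of the countable set $Q$ with decidable membership, it is itself countable --- this is the now-standard combination of numerical omniscience and comprehension used in Lemma 3.11.

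It remains to verify that $D$ is dense in $\Ub$. Given $x \in \Ub$, choose $\langle q, r\rangle \in P$ with $|q - x| < r$ and set $\delta = r - |q - x| > 0$. Using density of $Q$ in $\Rb$, for any prescribed radius $\epsilon$ I can find a rational $p$ with $|p - x| < \min(\epsilon, \delta)$; the triangle inequality gives $|p - q| \le |p - x| + |x - q| < r$, so $\langle q, r\rangle$ witnesses $p \in \Ub$ and hence $p \in D$, while $|p - x| < \epsilon$. Thus every open ball about a point of $\Ub$ meets $D$, and Proposition 3.20 gives that $D$ is dense. I expect the main obstacle to be the decidability of $p \in \Ub$: everything hinges on reducing the existential quantifier over the witness to a numerical search over the countable index set $Q \times Q^+$, so that numerical omniscience applies. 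Once that is in hand the remaining density argument is routine.
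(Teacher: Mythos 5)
The paper states this lemma without proof, so there is nothing to compare against line by line; your argument is correct and is evidently the intended one, since it uses exactly the machinery the paper sets up for this purpose: for a closed set the countable witness $C$ of Definition 3.15 is itself dense, and for an open set the rational points form a countable dense subset, with decidability of ``$p \in \Ub$'' for rational $p$ obtained by numerical omniscience over an enumeration of $Q \times Q^+$ (the technique of Lemma 3.11, and consistent with how Theorem 3.18 handles rationals in $\Rb - \Ub$). Your treatment of the empty open set is no less careful than the paper's own conventions, which likewise ignore the fact that ``countable'' (hence ``separable'') implicitly requires nonemptiness.
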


\begin{prop}
Let $\Xb \subseteq \Rb$ be a separable subset and let $\Yb =
\Rb - \Xb$.

(a) $\Xb$ is closed if and only if it is closed under limits of
Cauchy sequences.

(b) $\Yb$ is open if and only if for every $x \in \Yb$ there
exists $r > 0$ such that ${\bf ball}_r(x) \subseteq \Yb$.
\end{prop}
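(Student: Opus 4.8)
The plan is to handle the two parts separately, in each case exploiting the countable dense subset furnished by separability to rewrite the decisive membership conditions as \emph{decidable} statements, so that numerical omniscience and comprehension become available. Throughout, recall that membership in a third order object satisfies excluded middle and that the comparison of a real with a rational is decidable (by numerical omniscience applied to the existential-over-rationals description of the comparison); these two facts drive everything.

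For (a), the forward implication is immediate from Proposition 3.17 (every closed set is sequentially closed) and needs no separability. For the converse I would assume $\Xb$ sequentially closed, fix a countable dense witness $X = (x_k)$, and show $\Xb$ equals the closure $\Cb$ of $X$, which is closed by definition. The inclusion $\Xb \subseteq \Cb$ is precisely the ``contained in the closure'' form of density in Proposition 3.20. For $\Cb \subseteq \Xb$, take $x \in \Cb$; then for each $n$ there is some $y \in X$ with $|x - y| < 1/n$. Since $|x - x_k| < 1/n$ is a real-versus-rational comparison, it is decidable in $k$, so I may let $k_n$ be the \emph{least} index with $|x - x_{k_n}| < 1/n$ — no choice principle is invoked. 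The sequence $(x_{k_n})$ then lies in $X \subseteq \Xb$, is Cauchy, and converges to $x$; sequential closedness gives $x \in \Xb$, whence $\Xb = \Cb$ is closed.

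For (b) the forward direction is routine and again uses no separability: from a witness $P$ for $\Yb$ and a point $x \in \Yb$ one selects $\langle p,r\rangle \in P$ with $|p - x| < r$ and observes that ${\bf ball}_\rho(x) \subseteq {\bf ball}_r(p) \subseteq \Yb$ for $\rho = r - |p - x| > 0$. The substantive direction is the converse. Assuming every $x \in \Yb$ admits an $r > 0$ with ${\bf ball}_r(x) \subseteq \Yb$, I would produce an explicit witness, namely $P = \{\langle p,r\rangle \in Q \times Q^+ : |p - x_k| \geq r \text{ for all } k\}$, the pairs for which ${\bf ball}_r(p)$ misses the dense set $X$. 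Here $|p - x_k| \geq r$ is decidable, so by numerical omniscience the universal condition over $k$ is decidable and $P$ exists by comprehension. The geometric input, used in both inclusions, is that density makes ${\bf ball}_s(p)$ meet $\Xb$ if and only if it meets $X$: from $y \in \Xb \cap {\bf ball}_s(p)$, applying density to the ball of radius $s - |p - y|$ about $y$ yields a point of $X$ inside ${\bf ball}_s(p)$. Granting this, $\langle p,r\rangle \in P$ forces ${\bf ball}_r(p) \subseteq \Yb$, which gives one inclusion of the witness condition; for the other, given $x \in \Yb$ I take the $\rho > 0$ from the hypothesis, pick a rational $p$ with $|p - x| < \rho/3$ and a rational $r$ with $\rho/3 < r < 2\rho/3$, and check that ${\bf ball}_r(p) \subseteq {\bf ball}_\rho(x)$, so ${\bf ball}_r(p)$ misses $X$ while $|p - x| < r$.

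The main obstacle is the converse in (b), and specifically the demand that the candidate $P$ genuinely be a set of CM. This is exactly where separability is indispensable: without a countable dense $X$, the condition ``${\bf ball}_r(p)$ is disjoint from $\Xb$'' carries a second order quantifier over all of $\Xb$ and cannot be seen to satisfy excluded middle, so comprehension is unavailable. Replacing it, via the density equivalence, by the numerically surveyable statement ``${\bf ball}_r(p)$ misses the countable set $X$'' is the crux; the accompanying rational-approximation estimates are routine.
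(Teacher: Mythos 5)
Your proposal is correct, and part (a) is exactly the paper's argument: the forward direction is Proposition 3.17, and for the converse the paper likewise shows $\Xb$ equals the closure of its countable dense subset, your least-index construction of a Cauchy sequence being the same device used in the paper's proof of Proposition 3.17 (the paper compresses this step to ``$\Xb$ contains $\Cb$ since it is closed under Cauchy convergence''). In part (b) you prove the same thing by a different decomposition. The paper never builds a witness for $\Yb$ at all: it verifies that $\Yb$ is disjoint from the closure $\overline{C}$ of the countable dense set $C$, concludes $\Xb = \overline{C}$ is closed, and then simply cites Theorem 3.18 (a subset of $\Rb$ is closed if and only if its complement is open) to get openness of $\Yb$. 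Your explicit witness $P$ --- the pairs $\langle p,r\rangle \in Q \times Q^+$ such that $|p-x_k| \geq r$ for all $k$ --- is precisely the witness constructed inside the paper's proof of the forward direction of Theorem 3.18, so in effect you have inlined that (easy) half of the theorem rather than quoting it. The trade-off: the paper's route is shorter given the results already on the table, while yours is self-contained, uses only the easy direction of Theorem 3.18 (the paper's proof of the reverse direction, with its chain-of-balls construction, was the hard part and is not needed here), and makes explicit the one point where separability really enters, namely replacing the non-decidable condition ``${\bf ball}_r(p)$ is disjoint from $\Xb$'' by the arithmetical condition that the ball misses a countable set, so that $P$ exists by numerical omniscience and comprehension. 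Your identification of that as the crux is exactly right and matches the paper's overall design.
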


\begin{proof}
(a) The forward direction was Proposition 3.17. For the reverse
direction suppose $\Xb$ is sequentially closed, let $C$ be a countable
dense subset of $\Xb$, and let
$\Cb$ be the closure of $C$. Then $\Xb$ is contained in $\Cb$ by
density and $\Xb$ contains $\Cb$ since it is closed under Cauchy
convergence. So $\Xb = \Cb$ and hence $\Xb$ is closed.

(b) The forward direction is trivial; for the reverse direction
let $C$ be a countable dense subset of $\Xb = \Rb - \Yb$ and
verify that $\Yb$ is disjoint from the closure of $C$. It follows
that $\Xb$ is the closure of $C$, hence $\Xb$ is closed, hence
$\Yb$ is open.
\end{proof}

\begin{defi}
$\Kb \subseteq \Rb$ is {\it compact} if every
sequence of open sets that covers $\Kb$ has a finite subcover.
\end{defi}

\begin{theo}
Let $\Kb$ be a separable subset of $\Rb$. Then the following
are equivalent:

(i) $\Kb$ is closed and bounded;

(ii) $\Kb$ is compact;

(iii) $\Kb$ is bounded and contains the limits of all of its
Cauchy sequences;

(iv) every sequence in $\Kb$ has a subsequence which converges
to a limit in $\Kb$.
\end{theo}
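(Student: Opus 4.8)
The plan is to prove the cycle
$(\mathrm{i})\Leftrightarrow(\mathrm{iii})\Leftrightarrow(\mathrm{iv})$ together with
$(\mathrm{ii})\Rightarrow(\mathrm{iii})$ and $(\mathrm{iv})\Rightarrow(\mathrm{ii})$, which yields the full equivalence. The equivalence $(\mathrm{i})\Leftrightarrow(\mathrm{iii})$ is essentially free: both assert boundedness, and Proposition 3.23(a) says that a separable set is closed iff it is closed under limits of Cauchy sequences, which is exactly the remaining clause of (iii). Throughout I would fix a countable dense $C=(c_k)$ of $\Kb$ (separability) and lean on two features special to this setting: reals presented as Dedekind cuts have decidable order, hence decidable equality and apartness; and by numerical omniscience (Theorem 2.2) every arithmetical formula satisfies excluded middle, so any predicate phrased with only numerical quantifiers over decidable matrices is decidable.

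For $(\mathrm{iii})\Leftrightarrow(\mathrm{iv})$ I would argue Bolzano--Weierstrass. The direction $(\mathrm{iv})\Rightarrow(\mathrm{iii})$ is routine: a Cauchy sequence shares the limit of any convergent subsequence, giving sequential closedness, and were $\Kb$ unbounded one could, by a decidable search along $C$, extract a sequence with $|c_{k_i}|\to\infty$, which has no convergent subsequence. For $(\mathrm{iii})\Rightarrow(\mathrm{iv})$ I would bisect: given $(y_i)$ in $\Kb\subseteq[a,b]$, the statement ``$(\exists^\infty i)(y_i\in I)$'' for a rational interval $I$ is a $\Pi^0_2$ arithmetical statement, hence decidable, so I can always retain the half containing infinitely many terms; dependent choice (axiom (8)) then builds nested intervals of vanishing length whose common point exists by Cauchy completeness (Theorem 3.12) and lies in $\Kb$ by sequential closedness. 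The implication $(\mathrm{ii})\Rightarrow(\mathrm{iii})$ is also easy: covering $\Kb$ by the balls ${\bf ball}_n(0)$ and taking a finite subcover gives boundedness, while if $(y_n)\to y$ is Cauchy in $\Kb$ then, since $y\in\Kb$ is a decidable third-order membership, I may assume $y\notin\Kb$ and contradict convergence by covering $\Kb$ with the open sets $\{x:|x-y|>1/n\}$ and extracting a finite subcover.

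The hard part is $(\mathrm{iv})\Rightarrow(\mathrm{ii})$, and the difficulty is precisely that ``$\Kb\subseteq V_m$'', where $V_m=\Ub_0\cup\cdots\cup\Ub_m$, carries a second-order universal quantifier over all reals and is not prima facie decidable, so one cannot simply search for a finite subcover. The first enabling step I would establish is that membership $x\in\Ub$ in an open set is decidable: its witness lies in the countable set $Q\times Q^+$, so after enumerating that set the defining existential becomes a numerical existential of a decidable matrix. The crux is then a \emph{distance lemma}: if $\Kb$ satisfies (iv) and $\Fb$ is a closed set disjoint from $\Kb$, then $d(\Kb,\Fb)>0$. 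Computed through the dense sets, ``$d(\Kb,\Fb)>0$'' is arithmetical and hence decidable; assuming it failed, a decidable search with dependent choice produces approximating sequences in $C$ and in a dense subset of $\Fb$ that force a common limit point (using (iv) and sequential closedness of $\Fb$), contradicting disjointness, and decidability then strips the double negation. Since $\Rb\setminus V_m$ is closed (Theorem 3.18) and separable (Lemma 3.22), the lemma shows that ``$\Kb\subseteq V_m$'' is equivalent to the decidable condition ``$d(\Kb,\Rb\setminus V_m)>0$''. I would finish by contradiction: if no $V_m$ covered $\Kb$, then each $\Kb\not\subseteq V_m$ yields some $x_m\in\Kb\setminus V_m$; applying (iv) gives a subsequence converging to $x\in\Kb$, which lies in a single $\Ub_N$ together with a rational ball about it, contradicting $x_m\notin\Ub_N$ for large $m\geq N$. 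Hence $(\exists m)(\Kb\subseteq V_m)$ is not false, and being a numerical existential of a decidable predicate it therefore holds, giving the finite subcover. The reduction of the subset relation to the arithmetical distance condition is where separability, sequential compactness, and numerical omniscience must all be combined, and I expect it to be the main obstacle.
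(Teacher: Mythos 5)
Your proof is correct, but it follows a genuinely different route from the paper's, and it does substantially more work---work the paper in fact skips. The paper proves the single cycle (i) $\Rightarrow$ (ii) $\Rightarrow$ (iii) $\Rightarrow$ (iv) $\Rightarrow$ (i), declares the first three implications ``standard'' and independent of separability, and writes out only (iv) $\Rightarrow$ (i): boundedness of a countable dense subset $C$ satisfies excluded middle by numerical omniscience, so it can be proved by contradiction, and closedness then follows from Proposition 3.23(a). These are exactly the two ingredients you use for (i) $\Leftrightarrow$ (iii) and for the boundedness half of (iv) $\Rightarrow$ (iii), so that portion of your argument coincides with the paper's in substance. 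Where you genuinely diverge is the covering direction. The paper's ``standard'' label conceals a real issue: the usual Heine--Borel argument pivots on statements like ``$\Kb \cap I$ admits no finite subcover'' or on picking $x_m \in \Kb \setminus (\Ub_0 \cup \cdots \cup \Ub_m)$, and these contain a second-order universal quantifier over $\Kb$, so CM's excluded-middle axiom does not apply to them directly; some reduction is needed before one may argue by contradiction or search for the subcover. Your distance lemma supplies exactly that reduction: under (iv), the inclusion $\Kb \subseteq \Ub_0 \cup \cdots \cup \Ub_m$ is equivalent to the arithmetical, hence decidable, condition $d(C,F_m)>0$ computed through countable dense subsets, where the countable witness $F_m$ for the closed complement comes from Theorem 3.18 and Lemma 3.22 (with dependent choice to select the witnesses uniformly in $m$). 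After that, the existence of a finite subcover is a numerical existential of a decidable predicate, and numerical omniscience strips the double negation produced by your subsequence argument. In effect your (iv) $\Rightarrow$ (ii) is the missing rigorization of the step the paper waves through as standard; the cost is length, and the benefit is that your proof exhibits precisely where separability, sequential compactness, dependent choice, and numerical omniscience must combine---which is the technique the paper itself emphasizes (cf.\ the remarks following Lemma 3.11) but does not carry out here. The one spot worth a sentence more in a final write-up is the extraction of $x_m \in \Kb \setminus V_m$ from the failure of the decidable distance condition: it is not a direct choice from a negative statement, but goes through the approximating sequences, (iv), and sequential closedness of the complement, exactly as in your lemma's proof.
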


\begin{proof}
The proofs of (i) $\Rightarrow$ (ii) $\Rightarrow$ (iii)
$\Rightarrow$ (iv) are standard and do not use separability.
For (iv) $\Rightarrow$ (i), suppose every sequence has a convergent
subsequence and let $C$ be a countable dense subset of $\Kb$. Then by
numerical omniscience the assertion ``$C$ is bounded'' satisfies excluded
middle, so a proof by contradiction shows that $C$, and hence $\Kb$, must
be bounded. The fact that $\Kb$ is closed follows from Proposition 3.23 (a).
\end{proof}

\begin{defi}
Let $\Xb \subseteq \Rb$. We say that a function $\fb: \Xb \to \Rb$
is {\it continuous} if the inverse image of any open set in $\Rb$ is
the intersection of an open subset of $\Rb$ with $\Xb$.
\end{defi}

\begin{theo}
Suppose $\Xb \subseteq \Rb$ is separable and let $\fb: \Xb \to \Rb$
be a function. Then the following are equivalent:

(i) $\fb$ is continuous;

(ii) the inverse image of every closed set in $\Rb$ is the
intersection of a closed set in $\Rb$ with $\Xb$;

(iii) for any countable set $C \subseteq \Xb$ with closure
$\overline{C}$ we have $x \in \overline{C} \cap \Xb$ $\Rightarrow$
$f(x) \in \overline{\fb(C)}$;

(iv) $\fb$ preserves convergence of sequences;

(v) for every $x \in \Xb$ and every $\epsilon > 0$ there exists
$\delta > 0$ such that $\db(x, y) < \delta$ implies $\db(\fb(x),\fb(y))
< \epsilon$.
\end{theo}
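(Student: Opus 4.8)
The plan is to run the equivalences as a cycle, proving the ``downhill'' implications (i) $\Rightarrow$ (v) $\Rightarrow$ (iv) $\Rightarrow$ (iii) by essentially standard arguments and then closing the loop with the harder ``uphill'' passage from the sequential conditions back to the topological one. The equivalence (i) $\Leftrightarrow$ (ii) I would dispatch first using Theorem 3.18 together with the fact that $\fb^{-1}$ commutes with complementation inside $\Xb$; this needs only that $\fb(x) \in \Ub$ is decidable for fixed $x$, which holds because membership in an open set is a numerical existential over its countable witness. For (i) $\Rightarrow$ (v), given $x$ and $\epsilon$ the inverse image of the open set ${\bf ball}_\epsilon(\fb(x))$ has the form $\Vb \cap \Xb$ with $\Vb$ open, and a witness pair placing $x$ in a rational ball of $\Vb$ yields the required $\delta$. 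The steps (v) $\Rightarrow$ (iv) and (iv) $\Rightarrow$ (iii) are routine, the latter using numerical omniscience to extract from $x \in \overline{C}$ a sequence in $C$ converging to $x$ (minimize over an enumeration of $C$), applying (iv), and recalling that $\overline{\fb(C)}$ is sequentially closed; here $\fb(C)$ exists by Proposition 3.9.

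To close the cycle I would pass (iii) $\Rightarrow$ (iv) $\Rightarrow$ (v) $\Rightarrow$ (i). The enabling observation is that, once the sequence $(\fb(y_n))$ is fixed as an object, the assertion $\fb(y_n) \to \fb(x)$ has only numerical quantifiers and each comparison of reals is decidable by numerical omniscience, so convergence of a given real sequence obeys excluded middle and may be attacked by contradiction. For (iii) $\Rightarrow$ (iv): given $y_n \to x$, assume $\fb(y_n) \not\to \fb(x)$, extract a subsequence whose images stay a fixed distance from $\fb(x)$, apply (iii) to that subsequence (whose closure still contains $x$) to obtain a further subsequence of images converging to $\fb(x)$, and derive a contradiction. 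For (iv) $\Rightarrow$ (v): fix $x$ and $\epsilon$ and let $D$ be a countable dense subset of $\Xb$ furnished by separability; the statement ``for each $n$ there is $y \in D$ with $\db(x,y) < 1/n$ and $\db(\fb(x),\fb(y)) > \epsilon/2$'' is decidable by numerical omniscience, and were it to hold for all $n$ it would yield a sequence in $D$ converging to $x$ with images bounded away from $\fb(x)$, contradicting (iv); the negation provides an explicit $\delta$, with the passage from $D$ to all of $\Xb$ justified by sequential continuity.

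The main obstacle is (v) $\Rightarrow$ (i), where I must exhibit an open $\Vb$ with $\Vb \cap \Xb = \fb^{-1}(\Ub)$ even though the natural defining condition for a witness --- that a rational ball ${\bf ball}_s(p)$ should meet $\Xb$ only inside $\fb^{-1}(\Ub)$ --- quantifies over the uncountable $\Xb$ and so need not obey excluded middle. I would circumvent this by testing the condition only against the dense set $D$ while building in a closed-ball margin: letting $P$ be a witness for $\Ub$, take $\Vb$ to be the union of all rational balls ${\bf ball}_s(p)$ for which there exist $\langle q,t\rangle \in P$ and a rational $t' < t$ with $\db(q,\fb(y)) \le t'$ for every $y \in D$ satisfying $\db(p,y) < s$. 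This condition is a block of numerical quantifiers over decidable matrices, hence decidable by numerical omniscience, so $\Vb$ is a legitimate open set. That $\Vb \cap \Xb = \fb^{-1}(\Ub)$ then follows in both directions from sequential continuity: any point of $\Vb \cap \Xb$ is a limit of $D$-points whose images lie in the closed ball of radius $t'$ about $q$, so its image lies in ${\bf ball}_t(q) \subseteq \Ub$; and conversely (v) supplies around each $x \in \fb^{-1}(\Ub)$ a rational ball of the required type. The slack $t' < t$ is essential, since without it the limit of ``good'' images could land on the boundary of $\Ub$.

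The recurring difficulty, and the reason separability is hypothesized, is that conditions (i), (ii) and (v) are all stated with quantifiers ranging over the uncountable $\Xb$, to which neither third order comprehension nor proof by contradiction applies directly; the whole strategy consists in replacing these by equivalent countable assertions over the dense subset $D$, a replacement licensed by sequential continuity and rendered decidable by numerical omniscience.
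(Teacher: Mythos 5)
Your proposal is correct and, at the two points where the theorem has real content, coincides with the paper's own proof: for (iv) $\Rightarrow$ (v) both you and the paper restrict to a countable dense subset, invoke numerical omniscience to obtain excluded middle, argue by contradiction, and then extend to all of $\Xb$ by density plus sequential continuity; and for (v) $\Rightarrow$ (i) your witness built from rational balls tested only against $D$ with the slack $t' < t$ is exactly the paper's witness $P'$, whose defining condition $\fb(X \cap {\bf ball}_{r'}(p')) \subseteq {\bf ball}_{r-\epsilon}(p)$ carries the same essential margin $\epsilon > 0$. The only difference is organizational --- the paper runs a single cycle (i) $\Rightarrow$ (ii) $\Rightarrow$ (iii) $\Rightarrow$ (iv) $\Rightarrow$ (v) $\Rightarrow$ (i), so your separate handling of (ii) and your redundant uphill implications (iii) $\Rightarrow$ (iv) $\Rightarrow$ (v) prove more than is needed --- which does not affect correctness.
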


\begin{proof}
The proofs of (i) $\Rightarrow$ (ii) $\Rightarrow$ (iii) $\Rightarrow$
(iv) are standard and do not use separability. For (iv) $\Rightarrow$
(v), let $X$ be a countable dense subset of $\Xb$. We first show
that the $\epsilon$-$\delta$ condition holds for $x$ and $y$ in $X$ and with
$\epsilon$ and $\delta$ restricted to rational values. If not then
we can find $x \in X$ and a sequence $(y_n) \subseteq X$ such
that $y_n \to x$ but $\fb(y_n) \not\to \fb(x)$, contradicting (iv).
Since $X$ is countable and $\epsilon$ and $\delta$ are restricted
to the rationals we have excluded middle, so we conclude that the
$\epsilon$-$\delta$ condition does hold for $x$ in $X$. Since $X$ is
dense in $\Xb$ and $\fb$ preserves convergence of sequences, (v)
follows easily.

For (v) $\Rightarrow$ (i), again let $X$ be a countable dense subset
of $\Xb$. Also let $\Ub \subseteq \Rb$ be an open set with witness $P$.
Let $P'$ be the set of pairs $\langle p', r'\rangle \in Q \times Q^+$
such that $\fb(X \cap {\bf ball}_{r'}(p')) \subseteq {\bf ball}_{r - \epsilon}(p)$
for some $\langle p,r\rangle \in P$ and some $\epsilon > 0$. It is
then straightforward to verify that $P'$ is a witness for an open
set $\Ub'$ which satisfies $\Ub' \cap \Xb = \fb^{-1}(\Ub)$.
\end{proof}

\begin{theo}
The sum and product of two continuous functions from $\Xb \subseteq \Rb$
to $\Rb$ are continuous. The composition of two continuous functions, if
defined, is continuous.
\end{theo}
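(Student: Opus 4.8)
The plan is to dispatch the composite directly from the definition of continuity (Definition 3.26) and to obtain the sum and product by constructing witnesses for the relevant open sets, exploiting the fact that inverse images of open balls are open (Proposition 3.15) together with the countability of the rational parameters involved. One preliminary is that $\fb + \gb$, $\fb \cdot \gb$, and $\gb \circ \fb$ are genuine functions in the sense of Definition 3.3; for the sum, say, the graph $\{\langle x,z\rangle : z = \fb(x) + \gb(x)\}$ exists by third order comprehension because equality of Dedekind cuts satisfies excluded middle by numerical omniscience. I take these routine verifications for granted.

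For the composite, let $\fb : \Xb \to \Rb$ take values in $\Yb$, let $\gb : \Yb \to \Rb$, and let $\Ub \subseteq \Rb$ be open. By continuity of $\gb$ we may write $\gb^{-1}(\Ub) = \Vb \cap \Yb$ with $\Vb$ open. Since $\fb$ maps $\Xb$ into $\Yb$, $(\gb \circ \fb)^{-1}(\Ub) = \fb^{-1}(\Vb \cap \Yb) = \fb^{-1}(\Vb)$, and continuity of $\fb$ exhibits the latter as $\Vb' \cap \Xb$ for some open $\Vb'$. Hence $\gb \circ \fb$ is continuous. This argument invokes only Definition 3.26 and Proposition 3.4 and needs no separability hypothesis on $\Xb$ or $\Yb$.

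For the sum I would not form $(\fb+\gb)^{-1}(\Ub)$ by a comprehension over $\Xb$, whose defining condition we cannot in general decide, but instead cover it through the target rationals. The elementary observation is that if $\fb(x) \in {\bf ball}_{s_1}(q_1)$ and $\gb(x) \in {\bf ball}_{s_2}(q_2)$ then $(\fb+\gb)(x) \in {\bf ball}_{s_1+s_2}(q_1+q_2)$. Given an open $\Ub$ with witness $P$, let $\Ub'$ be the union of the open sets $\Vb_1 \cap \Vb_2$, where $\fb^{-1}({\bf ball}_{s_1}(q_1)) = \Vb_1 \cap \Xb$ and $\gb^{-1}({\bf ball}_{s_2}(q_2)) = \Vb_2 \cap \Xb$ (continuity of $\fb$ and $\gb$, Proposition 3.15(a)), and the union ranges over all $\langle p,r\rangle \in P$ and all rationals $q_1, q_2, s_1, s_2$ with $s_1, s_2 > 0$ satisfying the \emph{decidable} inclusion ${\bf ball}_{s_1+s_2}(q_1+q_2) \subseteq {\bf ball}_r(p)$. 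Each $\Vb_1 \cap \Vb_2$ is open by Proposition 3.15(c), and one checks that $\Ub' \cap \Xb = (\fb+\gb)^{-1}(\Ub)$: the inclusion $\supseteq$ is the ball-sum observation, and $\subseteq$ holds because any $x$ with $(\fb+\gb)(x) \in {\bf ball}_r(p)$ admits rationals $q_i, s_i$ placing $\fb(x), \gb(x)$ in the smaller balls while keeping the inclusion condition. The product is handled identically, the inclusion condition becoming ${\bf ball}_{|q_1|s_2 + |q_2|s_1 + s_1 s_2}(q_1 q_2) \subseteq {\bf ball}_r(p)$, which records the estimate for $|\fb(x)\gb(x) - q_1 q_2|$.

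The step that genuinely uses the constructive constraints of CM, and which I expect to be the main obstacle, is verifying that $\Ub'$ is actually open, i.e.\ producing its witness. Proposition 3.15(b) supplies a witness only for a union indexed by a \emph{sequence}, so the essential points are that each $\fb^{-1}({\bf ball}_{s_1}(q_1))$ comes with an explicit witness from the continuity of $\fb$, that the inclusion condition on rational balls is arithmetically decidable so that the index set is an enumerable subset of the countable set of tuples, and that the dependent choice built into Proposition 3.15(b) then assembles these into a single witness. Passing through rational data in this way is exactly what avoids the illegitimate comprehension over $\Xb$. (If one assumes $\Xb$ separable there is a shorter route for the sum and product: by Theorem 3.27 continuous functions preserve sequential convergence, this property is stable under sums and products on the sequentially complete field $\Rb$, and Theorem 3.27 converts sequential continuity back into continuity.)
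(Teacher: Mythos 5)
Your proof is correct, but there is nothing in the paper to measure it against: Theorem 3.28 is stated there with no proof at all, evidently being regarded as routine. Your write-up is therefore a genuine addition, and it correctly identifies the one point that makes the result \emph{not} routine in CM: the theorem carries no separability hypothesis on $\Xb$, so the tempting shortcut through Theorem 3.27 (sequential continuity or the $\epsilon$-$\delta$ characterization) is unavailable, exactly as your closing parenthetical acknowledges. Your direct construction --- indexing by the rational data $\langle p,r\rangle \in P$ and $q_1,q_2,s_1,s_2$ subject to a decidable ball-inclusion, pulling back rational balls through $\fb$ and $\gb$, and assembling the countably many witnesses by dependent choice as in Proposition 3.15(b) --- is precisely the paper's own idiom (witnesses, arithmetical comprehension on a countable index set, dependent choice to select witnesses), and the composition argument needs nothing beyond Definition 3.26 and Proposition 3.4, as you say. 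Two small corrections, neither a gap. First, your labels for the two inclusions of $\Ub' \cap \Xb = (\fb+\gb)^{-1}(\Ub)$ are swapped: the ball-sum observation proves $\Ub' \cap \Xb \subseteq (\fb+\gb)^{-1}(\Ub)$, and the approximation argument proves the reverse. Second, your stated motivation for avoiding comprehension over $\Xb$ is slightly off: once $\fb+\gb$ is known to be a function, the set $(\fb+\gb)^{-1}(\Ub)$ \emph{does} exist by Proposition 3.4, since membership of a second order object in the third order object $\Ub$ is atomic and hence decidable. What your rational-data construction is genuinely needed for is not the existence of the inverse image but the production of an open $\Ub'$ with $\Ub' \cap \Xb = (\fb+\gb)^{-1}(\Ub)$, i.e., the continuity itself.
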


\begin{theo}
Let $\Xb \subseteq \Rb$ and let $\fb: \Xb \to \Rb$
be continuous. If $\Xb$ is separable and compact then $\fb$ is
bounded and achieves its maximum and minimum. If $\Xb$ contains
the interval $[a,b]$ then $\fb$ attains every value between $\fb(a)$
and $\fb(b)$.
\end{theo}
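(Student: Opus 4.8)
The plan is to treat the two assertions separately, reducing the extreme value statement to the sequential characterizations of compactness and continuity, and the intermediate value statement to a bisection argument made legitimate by numerical omniscience. For the first part, since $\Xb$ is separable and compact, Theorem 3.25 lets me use sequential compactness (condition (iv)): every sequence in $\Xb$ has a subsequence converging in $\Xb$. Likewise, since $\fb$ is continuous on the separable set $\Xb$, Theorem 3.27 lets me use that $\fb$ preserves convergence of sequences (condition (iv)). Fix a countable dense $C = \{x_k\} \subseteq \Xb$. I would first prove boundedness. Form the nondecreasing sequence of reals $M_k = \max(|\fb(x_0)|, \dots, |\fb(x_k)|)$ and set $U = \bigcup_k M_k$ as a subset of $\tilde{Q}$; by numerical omniscience (Theorem 2.2) applied to the decidable predicate $q \in U$, either $U = \tilde{Q}$ or $U \neq \tilde{Q}$, so ``$\fb$ is bounded on $C$'' satisfies excluded middle. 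If $\fb$ were unbounded on $C$ I could use dependent choice to select $x_{k_m}$ with $|\fb(x_{k_m})| \to \infty$, extract a subsequence converging to some $x^\ast \in \Xb$ by sequential compactness, and obtain a contradiction, since sequential continuity forces $\fb$ of that subsequence to converge to the finite value $\fb(x^\ast)$. Hence $\fb$ is bounded on $C$, and by density together with sequential continuity it is bounded on all of $\Xb$.

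Once boundedness is in hand, $s = \bigcup_k \fb(x_k)$ is the least upper bound of $\{\fb(x_k)\}$, and density plus sequential continuity show $s = \sup_{x \in \Xb} \fb(x)$. For each $j$ the real $s - 1/j$ is not an upper bound, so by numerical omniscience there is $k$ with $\fb(x_k) > s - 1/j$; dependent choice yields a sequence whose $\fb$-values tend to $s$, and a convergent subsequence $\to x^\ast \in \Xb$ then gives $\fb(x^\ast) = s$ by sequential continuity, so the maximum is attained. The minimum is obtained by applying this to $-\fb$.

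For the intermediate value statement I would work inside $[a,b] \subseteq \Xb$ and use bisection. The enabling observation is that order on $\Rb$ is decidable in CM: for Dedekind cuts $X, Y$ the predicate $q \in Y \wedge q \notin X$ is decidable in $q$, so numerical omniscience yields $X < Y \vee X \geq Y$, and similarly $X = Y \vee X \neq Y$, giving full trichotomy. Assuming without loss of generality $\fb(a) \leq \fb(b)$ and fixing $c$ with $\fb(a) \leq c \leq \fb(b)$, I build nested intervals $[a_n,b_n]$ with $b_n - a_n = (b-a)/2^n$ and the invariant $\fb(a_n) \leq c \leq \fb(b_n)$: at each stage the decidable comparison of $\fb(m_n)$ with $c$ at the midpoint $m_n$ determines deterministically which half to keep, so the sequence is produced by the recursion axiom. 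Both endpoints are Cauchy with a common limit $x \in [a,b]$ by Cauchy completeness (Theorem 3.12), and sequential continuity gives $\fb(a_n), \fb(b_n) \to \fb(x)$; passing to the limit in the invariant forces $\fb(x) = c$.

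I expect the intermediate value statement to be the main obstacle, since the classical theorem is the paradigm of a result that fails under purely intuitionistic logic: bisection stalls precisely because one cannot in general decide on which side of $c$ the midpoint value lies. The crux of the proof is therefore the remark that numerical omniscience upgrades CM to a setting in which comparison of reals is fully decidable, which is exactly what makes the bisection choices legitimate. A secondary point requiring care is the boundedness step, where the proof by contradiction is sanctioned only after verifying, via numerical omniscience, that the boundedness assertion itself obeys excluded middle.
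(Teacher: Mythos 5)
Your proof is correct, but the intermediate-value half follows a genuinely different route from the paper's. For the extreme-value half you and the paper do essentially the same thing: take a countable dense subset, extract a subsequence along which $\fb$ tends to the supremum, and apply sequential compactness (Theorem 3.25 (iv)) together with sequential continuity (Theorem 3.27 (iv)); the only difference is bookkeeping, in that you first establish boundedness as an excluded-middle-respecting statement and argue by contradiction, while the paper provisionally allows the supremum to be $\pm\infty$ and rules that out at the end. For the intermediate value statement, however, the paper does not bisect: it forms the disjoint open preimages $\Ub = \fb^{-1}((-\infty,z))$ and $\Vb = \fb^{-1}((z,\infty))$, takes $x$ to be the supremum of the rationals $p$ such that every rational in $[a,p]$ lies in $\Ub$, and argues that $x$ can lie in neither open set; the conclusion $\fb(x)=z$ then needs only the negative fact that a cut which is neither $<z$ nor $>z$ equals $z$. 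That argument works directly from the topological definition of continuity and needs neither dependent choice nor any positive decidability of order. Your bisection instead leans on full trichotomy for reals --- which is indeed available in CM, and your derivation of it from numerical omniscience is sound (in fact $x<y$ is a formula with no second- or third-order quantifiers, so CM's excluded-middle axiom applies to it outright) --- plus the recursion axiom for the interval construction and Cauchy completeness. What your route buys is conceptual transparency: it isolates decidability of order as the exact non-constructive ingredient that rescues the classically-styled IVT, a point worth making since bisection is the textbook example of an argument that fails in Bishop-style constructivism. What the paper's route buys is economy: it invokes fewer axioms and fewer prior theorems.

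One small repair your write-up needs: in the second statement $\Xb$ is not assumed separable, yet you invoke Theorem 3.27 (iv) (sequential continuity), whose statement carries a separability hypothesis. This is harmless --- the paper's proof of Theorem 3.27 notes that (i) $\Rightarrow$ (iv) does not use separability, and alternatively you could restrict $\fb$ to $[a,b]$, which is separable and on which the restriction is still continuous --- but as written the citation does not literally apply.
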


\begin{proof}
For the first statement, let $X = (x_n)$ be a countable dense subset
of $\Xb$, pass to a subsequence $(x_{n_k})$ such that $\fb(x_{n_k})$
converges to $\sup \fb(x_n)$ or $\inf \fb(x_n)$ (possibly $\pm \infty$),
then pass to a subsequence which converges in $X$, and finally
apply Theorem 3.27 (iv) (showing $\pm \infty$ are not possible
maximum and minimum values).

For the second statement, suppose $\fb(a) \neq \fb(b)$ and let $z$ be
any value strictly between $\fb(a)$ and $\fb(b)$. Consider the disjoint
open sets $\Ub = \fb^{-1}((-\infty, z))$ and $\Vb = \fb^{-1}((z,\infty))$.
Without loss of generality suppose $a \in \Ub$ and $b \in \Vb$.
Let $Y$ be the set of rationals $p > a$ such that every rational in
$[a,p]$ lies in $\Ub$; then $x = \sup Y$ cannot lie in $\Ub$ (since $\Ub$ is
open) or in $\Vb$ (since $\Vb$ is open and disjoint from $\Ub$). Also
$a < x < b$, so that $x \in \Xb$. Since
$\fb(x) \not\in \Rb - \{z\}$ we must have $\fb(x) = z$.
\end{proof}

\subsection{Metric spaces}

\begin{defi}
A {\it metric space} is a set $\Xb$ together with a function
$\db: \Xb\times\Xb \to [0,\infty)$ which satisfies the usual
metric axioms. It is {\it complete} if every Cauchy sequence converges.
A subset is {\it dense} if it intersects every {\it open ball}
${\bf ball}_r(x) = \{y \in \Xb: \db(x,y) < r\}$ and a space is
{\it separable} if it contains a countable dense subset.
\end{defi}

\begin{prop}
Every metric space densely embeds in a complete metric space. This
embedding is unique up to an isometric isomorphism fixing the
original space.
\end{prop}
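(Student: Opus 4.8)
The plan is to carry out the standard completion-by-Cauchy-sequences construction, but executed carefully within CM so that the completed space exists as a genuine set (third order object with identity) rather than merely as a class. Let $\Xb$ be a metric space with metric $\db$. First I would represent the points of the completion as Cauchy sequences in $\Xb$. A \emph{Cauchy sequence} is a function $\fb : N \to \Xb$ (equivalently, a sequence of second order objects indexed by $N$, in the sense of Definition 3.8) such that $(\forall k)(\exists n)(\forall i,j \geq n)\,\db(\fb(i),\fb(j)) < 1/k$. Since $N$ is countable and the truth value of each atomic distance comparison is decidable, numerical omniscience shows that the Cauchy condition satisfies excluded middle; hence by third order comprehension the collection $\hat{\Xb}$ of all Cauchy sequences in $\Xb$ exists as a third order object. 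I would equip $\hat{\Xb}$ with the identity $\fb \equiv \gb$ iff $\db(\fb(n),\gb(n)) \to 0$; again this condition is evaluable in countably many steps, so it too satisfies excluded middle, and one checks reflexivity, symmetry, and transitivity by routine estimates. This makes $\hat{\Xb}$ a set in the sense of Definition 3.3.

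Next I would define the extended metric. Given two Cauchy sequences $\fb,\gb$, the sequence of rationals (or reals) $\db(\fb(n),\gb(n))$ is itself Cauchy in $\Rb$, so by Theorem 3.14 ($\Rb$ is Cauchy complete) it has a limit, which I take to be $\hat\db(\fb,\gb)$. I would verify that $\hat\db$ is well-defined up to the identity on $\hat{\Xb}$, lands in $[0,\infty)$, and satisfies the metric axioms; these all follow from taking limits in the corresponding inequalities for $\db$, using that the order and algebraic operations on $\Rb$ respect sequential limits. The embedding $\iota : \Xb \to \hat{\Xb}$ sends $X$ to the constant sequence at $X$; it is manifestly isometric, and its image is dense because any Cauchy sequence $\fb$ is approximated by the constant sequences $\iota(\fb(n))$.

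The substantive point is \emph{completeness} of $\hat{\Xb}$, and this is where I expect the main obstacle. The classical argument takes a Cauchy sequence of points of the completion — i.e.\ a Cauchy sequence of Cauchy sequences — and diagonalizes. Intuitionistically the danger is that forming the diagonal requires, for each index, \emph{choosing} a sufficiently deep term of the corresponding inner sequence, and such a choice is not automatic. The resolution is that our sequences are genuine functions, so the relevant choices are uniform: given a Cauchy sequence $(\fb_m)$ in $\hat{\Xb}$, I would first replace each $\fb_m$ by an equivalent sequence that converges with an explicit modulus (for each $m$ pick, by the least-number principle available through numerical omniscience, the least $n$ witnessing the $1/m$-Cauchy condition), then form the diagonal $\hb(m) = \fb_m(n_m)$. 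Dependent choice (axiom (8)) is exactly what licenses assembling these choices into a single sequence of second order objects, and one then checks $\hb$ is Cauchy and is the limit of $(\fb_m)$. Finally, uniqueness up to isometric isomorphism fixing $\Xb$ follows the usual route: a dense isometric embedding into any complete space extends uniquely by sequential continuity, and the resulting map is a bijective isometry; the existence of the extension again rests on Cauchy completeness of the target together with the fact that our maps are honest functions, so no nonconstructive selection is needed.
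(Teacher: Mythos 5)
Your overall route is the same as the paper's: the paper's proof consists of saying that the set of Cauchy sequences exists by third order comprehension, that the standard equivalence relation on Cauchy sequences (also existing by comprehension) is taken as the identity of the completed set, and that the remainder is standard. Your proposal fills in exactly those details: numerical omniscience to get excluded middle for the Cauchy and equivalence conditions, Cauchy completeness of $\Rb$ for the induced metric, and a choice-free-in-spirit diagonal for completeness.

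One step needs repair, and in CM it is not cosmetic. You define a Cauchy sequence as ``a function $\fb : N \to \Xb$'' and then invoke third order comprehension to collect all of them into $\hat{\Xb}$. But a function from $N$ to $\Xb$ is, by Definition 3.3, a subset of $N \times \Xb$, i.e.\ a \emph{third} order object, and CM has no fourth order: the comprehension axiom (10) only collects \emph{second} order objects. Read literally, your comprehension step fails. The fix is the one your parenthetical gestures at, except that the relevant notion is Definition 3.2, not Definition 3.8 (the latter's ``sequence of subsets of $\Xb$'' is again a third order object): a point of $\hat{\Xb}$ must be a \emph{sequence of second order objects}, i.e.\ a single second order object $X \subseteq \tilde{N}^2$ whose sections satisfy $X_{(n)} \in \Xb$ for all $n$. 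With that coding, ``$X$ codes a Cauchy sequence in $\Xb$'' is a predicate on second order objects; your omniscience argument then shows it satisfies excluded middle, and axiom (10) applies. The two representations are not ``equivalent'' for this purpose---only the coded one can be collected into a set---and keeping this distinction straight is precisely what this proposition is exercising (compare the paper's remark immediately after the proof that even the image $\fb(\Xb)$ of the embedding need not exist). Once the coding is fixed, the rest of your argument goes through as written; note only that your diagonal does not actually need dependent choice, since your indices $n_m$ are least witnesses of conditions satisfying excluded middle, so the diagonal sequence already exists by comprehension---though invoking axiom (8) is harmless.
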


\begin{proof}
The proof is essentially the standard one. The set of Cauchy sequences
in a metric space exists by third order comprehension, as does
the standard equivalence relation on Cauchy sequences. We can then
use this equivalence relation as the identity in the set of Cauchy
sequences. The remainder of the proof is standard.
\end{proof}

This is our first serious use of the convention that identity in
sets is determined by equivalence relations.

A subtle point: although there is a function $\fb$ that embeds $\Xb$ into
its completion, the image $\fb(\Xb)$ need not exist as a set. We noted
earlier (at the end of Section 3.1) that images of sets do not exist
in general, and this is true even in the present rather special situation.

\begin{defi}
Let $\Xb$ be a metric space. A subset $\Ub \subseteq \Xb$ is {\it open}
if there is a countable set $P \subseteq \Xb \times \Rb^+$ such that
$\Ub = \bigcup_{\langle x,r\rangle \in P} {\bf ball}_r(x)$. A subset $\Cb \subseteq \Xb$
is {\it closed} if its complement is open. We call $P$ a {\it witness} both
for the open set $\Ub$ and the complementary closed set $\Xb - \Ub$.
\end{defi}

\begin{theo}
(a) Open balls are open.

(b) The union of any sequence of open subsets of a metric
space is open.

(c) The intersection of any finitely many open subsets of a separable metric
space is open.
\end{theo}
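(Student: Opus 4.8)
The plan is to treat the three parts in increasing order of difficulty, reusing the pattern of Proposition 3.16 for (a) and (b) and bringing separability to bear only in (c). For (a) I would simply take the singleton $P = \{\langle x,r\rangle\}$ as witness: it is a countable subset of $\Xb \times \Rb^+$ that exists by third order comprehension (membership in it reduces to the componentwise identity on $\Xb \times \Rb^+$, which is decidable since the identity on $\Xb$ is a third order object and equality of Dedekind cuts is decidable by numerical omniscience), and $\bigcup_{\langle y,s\rangle \in P}{\bf ball}_s(y) = {\bf ball}_r(x)$ on the nose.

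For (b), given a sequence $(\Ub_n)$ of open sets I would first invoke dependent choice (axiom (8)) to select a sequence of witnesses, exactly as in Proposition 3.16(b): each ``$\Ub_n$ is open'' is an existential over countable witnesses, so choice is needed to collect them into a single sequence. Encoding each witness by an enumerating function $N \to \Xb \times \Rb^+$ (a sequence of second order objects, hence a second order object, so that the second order form of dependent choice applies), I would then amalgamate the sequence into one enumeration via a pairing bijection $N \cong N \times N$; its image $P = \bigcup_n P_n$ exists and is countable as the image of a countable set (Proposition 3.9), and clearly $\bigcup_{\langle x,r\rangle \in P}{\bf ball}_r(x) = \bigcup_n \Ub_n$. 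Neither (a) nor (b) uses separability.

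For (c) I would reduce to the intersection of two open sets $\Ub_1,\Ub_2$ by induction on the number of sets, so the heart of the matter is to produce a countable witness for $\Ub_1 \cap \Ub_2$. Let $D$ be a countable dense subset and let $P_1,P_2$ be witnesses for $\Ub_1,\Ub_2$. The candidate witness is
$$P = \{\langle d,\rho\rangle \in D \times Q^+ : (\exists \langle x,s\rangle \in P_1)(\db(d,x)+\rho \le s) \,\wedge\, (\exists \langle y,t\rangle \in P_2)(\db(d,y)+\rho \le t)\},$$
so that by the triangle inequality each selected ball ${\bf ball}_\rho(d)$ is forced to lie inside both $\Ub_1$ and $\Ub_2$. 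Granting that $P$ exists, the inclusion $\bigcup_{\langle d,\rho\rangle \in P}{\bf ball}_\rho(d) \subseteq \Ub_1 \cap \Ub_2$ is immediate. For the reverse inclusion I would use density: given $z \in \Ub_1 \cap \Ub_2$, pick pairs $\langle x,s\rangle \in P_1$, $\langle y,t\rangle \in P_2$ with $\db(z,x) < s$ and $\db(z,y) < t$, set $\delta = \min(s-\db(z,x),\, t-\db(z,y)) > 0$, choose a rational $\rho$ with $0 < 3\rho < \delta$, and (searching the enumeration of $D$) a point $d \in D$ with $\db(z,d) < \rho$; a short triangle-inequality computation then shows $\langle d,\rho\rangle \in P$ and $z \in {\bf ball}_\rho(d)$.

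The step I expect to be the main obstacle --- and the only place where the conceptualist machinery really does work --- is showing that membership in $P$ satisfies excluded middle, which is what licenses the third order comprehension producing $P$ as a genuine (countable) set. Here I would argue as the paper does elsewhere: the real comparison $\db(d,x)+\rho \le s$ is the statement $(\forall q \in \tilde Q)(q \in \db(d,x)+\rho \to q \in s)$, universally quantified over the countable set $\tilde Q$ with decidable matrix, hence decidable by numerical omniscience; each inner existential ranges over the countable set $P_i$, so is again decidable by numerical omniscience; and a conjunction of decidable statements is decidable. This is precisely why separability is indispensable: restricting the centers to the countable set $D$ and the radii to $Q^+$ is exactly what keeps the defining condition surveyable and the resulting witness countable. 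Without a countable dense set one could not in general exhibit any countable witness for the intersection, which is why (c) --- unlike its analogue for $\Rb$ --- carries the separability hypothesis.
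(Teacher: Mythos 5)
Your proposal is correct and takes essentially the same approach as the paper: the heart of the matter is part (c), and your witness $P$ (centers in the countable dense set, rational radii, the condition $\db(d,x)+\rho \le s$ forcing containment, with existence secured by numerical omniscience and third order comprehension) is exactly what the paper's construction becomes once its reduction to intersections of pairs of open balls is unfolded over the two countable witnesses. Your treatments of (a) and (b) likewise match the paper's (the latter via dependent choice, as in its Proposition 3.16).
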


\begin{proof}
The only subtlety occurs in part (c), which comes down to showing that
the intersection of any two open balls is open. We obtain a witness for
${\bf ball}_r(x) \cap {\bf ball}_{r'}(x')$ by letting $X$ be a countable dense subset
and taking the set of pairs $\langle y,s\rangle$ such that $y \in X$,
$s \in Q^+$, $\db(x,y) + s \leq r$, and $\db(x',y) + s \leq r'$.
\end{proof}

\begin{coro}
(a) The intersection of any sequence of closed subsets of a metric
space is closed.

(b) The union of any finitely many closed subsets of a separable metric
space is closed.
\end{coro}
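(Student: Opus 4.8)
The plan is to prove both parts by complementation, reducing part~(a) to Theorem~3.33(b) and part~(b) to Theorem~3.33(c); classically this corollary is pure duality, so all of the content lies in making the two de Morgan passages work intuitionistically. A sequence of closed subsets is a single third order object $\Yb \subseteq N \times \Xb$ each of whose sections $\Yb_{(n)}$ is closed, and by the remark following Definition~3.8 the relation $x \in \Yb_{(n)}$ satisfies excluded middle uniformly in $n$.

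For part~(a) I would set $\Cb = \bigcap_n \Yb_{(n)}$, which exists by third order comprehension, and exhibit its complement as the union of the complementary open sets. Concretely, the sequence $\Zb$ with $\Zb_{(n)} = \Xb - \Yb_{(n)}$ exists (membership is decidable) and consists of open sets since each $\Yb_{(n)}$ is closed, so $\bigcup_n \Zb_{(n)}$ is open by Theorem~3.33(b). It then remains to check $\Xb - \Cb = \bigcup_n \Zb_{(n)}$, i.e.\ $\neg(\forall n)(x \in \Yb_{(n)}) \leftrightarrow (\exists n)(x \notin \Yb_{(n)})$. The implication $(\exists n)(x \notin \Yb_{(n)}) \to \neg(\forall n)(x \in \Yb_{(n)})$ is the intuitionistically valid direction $(\exists x)\neg\phi(x) \to \neg(\forall x)\phi(x)$ of Section~2.1. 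For the reverse implication --- the half that is not constructively available for a general predicate --- I would apply numerical omniscience to $(\forall n)(x \in \Yb_{(n)} \vee x \notin \Yb_{(n)})$ to obtain $(\forall n)(x \in \Yb_{(n)}) \vee (\exists n)(x \notin \Yb_{(n)})$, and then discharge the first disjunct against the hypothesis $\neg(\forall n)(x \in \Yb_{(n)})$ using the ex falso law $[(\phi \vee \psi) \wedge \neg\phi] \to \psi$. This identifies $\Xb - \Cb$ with an open set, so $\Cb$ is closed.

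For part~(b) it suffices to treat two closed sets $\Cb_1$ and $\Cb_2$ and iterate. Here the required de Morgan identity $\neg(x \in \Cb_1 \vee x \in \Cb_2) \leftrightarrow (\neg(x \in \Cb_1) \wedge \neg(x \in \Cb_2))$ is already provable in minimal logic (it is one of the exercises of Section~2.1), so no omniscience is needed: $\Xb - (\Cb_1 \cup \Cb_2) = (\Xb - \Cb_1) \cap (\Xb - \Cb_2)$ is the intersection of two open sets, which is open by Theorem~3.33(c), and hence $\Cb_1 \cup \Cb_2$ is closed.

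I do not expect a deep obstacle; the interest is entirely in the intuitionistic bookkeeping, and in particular in the asymmetry between the two parts. In part~(a) the hard half of de Morgan must be supplied by numerical omniscience, but no separability hypothesis appears because Theorem~3.33(b) holds for arbitrary metric spaces. In part~(b) the de Morgan step is free, yet separability reappears, inherited precisely from the use of Theorem~3.33(c). The one formal point that deserves attention is ensuring that the complement sequence $\Zb$ is genuinely available as a third order object so that Theorem~3.33(b) applies verbatim, and this is guaranteed by the decidability of third order membership.
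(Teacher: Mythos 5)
Your proof is correct and is exactly the argument the paper intends: the corollary is stated without proof as immediate dualization of Theorem 3.33, and your complementation argument --- with numerical omniscience supplying the $\neg(\forall n)\phi(n) \to (\exists n)\neg\phi(n)$ direction of de Morgan in part (a) via decidability of $x \in \Yb_{(n)}$, and minimal-logic de Morgan plus the separability-dependent Theorem 3.33(c) in part (b) --- is the intended bookkeeping. No discrepancy with the paper's approach.
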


\begin{defi}
The {\it closure} of a countable set $C \subseteq \Xb$ is the set of
all limits of convergent sequences in $C$. $\Xb$ is {\it totally bounded}
if for every $n \in N$ there is a finite set $S \subseteq \Xb$ such that
every $x \in \Xb$ satisfies $\db(x,s) < 1/n$ for some $s \in S$. $\Xb$ is
{\it compact} if every sequence of closed sets, any finitely many of which
have nonempty intersection, has nonempty intersection. $\Xb$ is {\it
boundedly compact} if every {\it closed ball} ${\overline{{\bf ball}}}_r(x) =
\{y \in \Xb: \db(x,y) \leq r\}$ is compact.
\end{defi}

(Closures exist by third order comprehension and a double application
of numerical omniscience: first we use it to check that for any $x \in \Xb$
and any $n \in N$ the condition ${\bf ball}_{1/n}(x) \cap C \neq \emptyset$
satisfies excluded middle, then we use it again to check that
the condition $(\forall n)({\bf ball}_{1/n}(x) \cap C \neq \emptyset)$ satisfies
excluded middle.)

\begin{theo}
Let $\Xb$ be a separable metric space. Then the following are equivalent:

(i) $\Xb$ is compact;

(ii) $\Xb$ is complete and totally bounded;

(iii) every sequence in $\Xb$ has a convergent subsequence.
\end{theo}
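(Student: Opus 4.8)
The plan is to prove the cycle (i) $\Rightarrow$ (iii) $\Rightarrow$ (ii) $\Rightarrow$ (i). The governing idea throughout is that separability lets me replace every quantifier ``$\forall x \in \Xb$'' or ``$\exists x \in \Xb$'' by the corresponding quantifier over a fixed countable dense set $C$; once this is done the assertions in play become arithmetical, with the real-number comparisons $\db(x,y) < 1/n$ decidable by numerical omniscience (Theorem 2.2), so that classical reasoning becomes available. One preliminary fact is used repeatedly: the closure of a countable set $C$ is genuinely closed. Membership in the closure satisfies excluded middle (as noted after Definition 3.37), and if $z$ is not in the closure then $(\exists n)(\forall c \in C)(\db(z,c) \geq 1/n)$, so a ball about $z$ avoids the closure; collecting such balls centered at points of the dense set produces a witness for the complement. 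Granting this, for (i) $\Rightarrow$ (iii) I would form, from a sequence $(x_k)$, the tail closures $\Cb_j = \overline{\{x_k : k \geq j\}}$. These are closed, decreasing, and each contains $x_j$, so any finitely many have nonempty intersection; compactness yields $w \in \bigcap_j \Cb_j$. That $w$ lies in every $\Cb_j$ says precisely that $w$ is a cluster point, i.e.\ $(\forall i)(\forall j)(\exists k \geq j)(\db(w,x_k) < 1/i)$; since the matrix is decidable a witness $k$ can be located by search, and dependent choice (axiom (8)) extracts a subsequence $(x_{k_i})$ with $\db(w,x_{k_i}) < 1/i$, which converges to $w$.

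For (iii) $\Rightarrow$ (ii), completeness is immediate, since a Cauchy sequence with a convergent subsequence converges to the same limit. Total boundedness I would prove by contradiction, which is legitimate because total boundedness of $\Xb$ is equivalent, after passing to $C$ and adjusting radii, to the arithmetical statement that for every $n$ some finite subset of $C$ is a $1/n$-net for $C$, and excluded middle holds for arithmetical formulas. Assuming $\Xb$ is not totally bounded, I fix an $n$ for which no finite subset of $C$ is a $1/n$-net; then the failure of a finite set $S$ to be a $1/n$-net supplies, by numerical omniscience, an explicit point of $C$ at distance $\geq 1/n$ from every element of $S$, so dependent choice builds a $1/n$-separated sequence. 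Such a sequence has no Cauchy subsequence, contradicting (iii).

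For (ii) $\Rightarrow$ (i), I first isolate the sequential-compactness content as a lemma: a complete, totally bounded space is sequentially compact. Given a sequence, I repeatedly split the current infinite index set $I$ into the finitely many decidable pieces $A_1, \dots, A_p$ determined by which point of a finite $1/m$-net is first within $1/m$ of the corresponding term. The crucial step is that ``$A_\ell$ is infinite'', namely $(\forall N)(\exists k > N)(k \in A_\ell)$, has only number quantifiers together with the free second-order object $A_\ell$, hence is arithmetical and obeys excluded middle; so I may select the least $\ell$ with $A_\ell$ infinite, at least one existing since otherwise $I$ would be finite. Iterating by dependent choice produces nested infinite index sets confining the subsequence to ever-smaller balls; diagonalizing yields a Cauchy subsequence, and completeness supplies its limit. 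Granting the lemma, I reduce a family of closed sets with the finite intersection property to a decreasing family of nonempty closed sets $\Cb_n$, choose $y_n \in \Cb_n$ by dependent choice, extract $y_{n_k} \to w$ from the lemma, and conclude $w \in \bigcap_n \Cb_n$ because each $\Cb_n$, being the complement of an open set, is sequentially closed.

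The main obstacle is not the topology, which is classical, but checking that each classical inference is licensed intuitionistically. The two delicate points are the pigeonhole step in the sequential-compactness lemma, handled by the observation that infiniteness of an \emph{explicitly given} set of natural numbers is an arithmetical predicate and therefore decidable, and the extraction of total boundedness, which requires first recognizing total boundedness as equivalent to an arithmetical statement so that proof by contradiction applies. In both cases the enabling move is the reduction, via separability, of quantification over $\Xb$ to quantification over the countable dense set $C$, followed by an appeal to numerical omniscience.
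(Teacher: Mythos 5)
Your proof is correct, but it runs the cycle in the opposite direction from the paper: you prove (i)$\Rightarrow$(iii)$\Rightarrow$(ii)$\Rightarrow$(i), while the paper proves (i)$\Rightarrow$(ii)$\Rightarrow$(iii)$\Rightarrow$(i). The shared technical core is identical: your sequential-compactness lemma inside (ii)$\Rightarrow$(i) is exactly the paper's (ii)$\Rightarrow$(iii) (you track nested infinite index sets where the paper tracks net centers $s_k$ with $\db(s_k,s_{k+1}) < 2^{-k}+2^{-k-1}$, an immaterial difference; notably you are more explicit than the paper about why the pigeonhole step is licensed, namely decidability of ``$A_\ell$ is infinite''), and your closing finite-intersection argument is exactly the paper's (iii)$\Rightarrow$(i). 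The genuine divergence is in how compactness is consumed and produced. The paper's (i)$\Rightarrow$(ii) attacks the FIP formulation of compactness directly, twice, by constructing explicit open-set witnesses: from a Cauchy sequence to extract completeness, and from a separated sequence to falsify compactness and get total boundedness by contradiction. You avoid both witness constructions: your (i)$\Rightarrow$(iii) uses compactness only through the tail closures $\Cb_j = \overline{\{x_k : k \geq j\}}$, whose finite intersections visibly contain points of the sequence, and your total-boundedness contradiction in (iii)$\Rightarrow$(ii) is run against sequential compactness rather than FIP-compactness, which is noticeably lighter. The price of your route is twofold: you need the lemma that closures of countable sets are closed in a separable metric space --- this is the paper's Proposition 3.37, which appears only \emph{after} the theorem, but its proof there is independent of the theorem and you supply your own correct proof of it, so there is no circularity --- and you prove four implications where the paper proves three, since your (ii)$\Rightarrow$(i) internally contains (ii)$\Rightarrow$(iii)$\Rightarrow$(i). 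What you buy in exchange is that each individual step is simpler, and the intuitionistic bookkeeping (numerical omniscience over the countable dense set, dependent choice for the recursive selections) is exactly the same as the paper's.
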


\begin{proof}
(i) $\Rightarrow$ (ii): Let $X$ be a countable dense subset of $\Xb$.
Suppose $\Xb$ is compact and let $(x_n)$ be a Cauchy sequence in $\Xb$.
For each $k$ let $n_k$ be the smallest natural number such that
$\db(x_n, x_{n_k}) \leq 1/k$ for all $n > n_k$. Then the set of pairs
$\langle x,q\rangle$ with $x \in X$, $q \in Q^+$, and $\db(x,x_{n_k})
> q + 1/k$ witnesses an open set $\Ub_k$. Applying the compactness
hypothesis to the sequence of complementary closed sets then produces
a limit for $(x_n)$. This shows that $\Xb$ is complete. To verify total
boundedness, enumerate the elements of $X$ (up to identity) as $(z_n)$
and observe that the assertion ``for
every $k$ there exists $n$ such that every $z_i$ is within $1/k$ of
some $z_j$ with $1 \leq j \leq n$'' satisfies excluded middle. So
suppose this statement fails. Then there exists $k$ such that for
every $n$, some $z_i$ satisfies $\db(z_i,z_j) \geq 1/k$ for
$1 \leq j \leq n$. Using dependent choice, we can then construct a
sequence $(n_k)$ such that $\db(z_{n_i}, z_{n_j}) \geq 1/k$ for all
$i \neq j$. Finally, for each $i$ the pairs $\langle x,q \rangle$
with $x \in X$, $q \in Q^+$, and $q < \db(x,z_{n_j}) - 1/2k$ for all
$j \geq i$ witness an open set $\Ub_i$, and the complementary closed
sets falsify the compactness condition. Thus $\Xb$ must be totally
bounded.

(ii) $\Rightarrow$ (iii): Assume (ii) and let $(x_n)$ be any sequence
in $\Xb$. By completeness it will suffice to show that $(x_n)$ has a
Cauchy subsequence. Let $S_1 \subseteq \Xb$ be a finite set such that
for all $x \in \Xb$ there exists $s \in S_1$ with $\db(x,s) < 1/2$.
Find $s_1 \in S_1$ such that
$\db(x_n,s_1) < 1/2$ for infinitely many $n$, and let $n_1$ be the
smallest number such that $\db(x_{n_1},s_1) < 1/2$. Then let $S_2 \subseteq
\Xb$ be a finite set such that for all $x \in \Xb$ there exists $s \in S_2$
with $\db(x,s) < 1/4$,
find $s_2 \in S_2$ such that $\db(s_1,s_2) < 1/2 + 1/4$ and $\db(x_n,s_2)
< 1/4$ for infinitely many $n$, and let $n_2$ be the smallest number
after $n_1$ such that $\db(x_{n_2},s_2) < 1/4$. Continue in this way,
with $\db(s_k, s_{k+1}) < 2^{-k} + 2^{-k-1}$ and $\db(x_{n_k}, s_k)
< 2^{-k}$. Then $(x_{n_k})$ is the desired Cauchy subsequence.

(iii) $\Rightarrow$ (i): Assume (iii). Let $(\Cb_n)$ be a sequence of closed
subsets with the finite intersection property, and for each $n$
choose a point $x_n \in \bigcap_{k=1}^n \Cb_k$. Then $(x_n)$ has a
convergent subsequence, and the limit of this sequence belongs to
every $\Cb_n$.
\end{proof}

\begin{prop}
If $\Xb$ is separable then the closure of any countable set is
closed. If $\Xb$ is separable and boundedly compact then every
closed set is separable.
\end{prop}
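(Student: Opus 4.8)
The plan is to treat the two assertions separately, handling the first by an explicit witness construction and the second by reducing to compact pieces.

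For the first statement, fix a countable dense subset $X \subseteq \Xb$ and a countable set $C$, and aim to exhibit a \emph{witness} for the open complement of $\overline{C}$. I would set $P$ to be the set of all $\langle x, r\rangle \in X \times Q^+$ with $\db(x,c) \geq 2r$ for every $c \in C$. For fixed $x$ and $r$ the relation $\db(x,c) \geq 2r$ is arithmetical, so by numerical omniscience (applied over the countable set $C$) the defining condition satisfies excluded middle; hence $P$ exists by third order comprehension and is countable. It then remains to check that $\bigcup_{\langle x,r\rangle \in P} {\bf ball}_r(x) = \Xb - \overline{C}$. The inclusion from left to right is a triangle inequality: if $\db(x,c) \geq 2r$ for all $c \in C$ and $\db(x,y) < r$, then $\db(y,c) > r$ for all $c$, so $y \notin \overline{C}$. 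For the reverse inclusion, if $y \notin \overline{C}$ then (using the excluded middle governing membership in $\overline{C}$) there is an $n$ with $\db(y,c) \geq 1/n$ for all $c \in C$; choosing $x \in X$ with $\db(x,y)$ small and a rational $r$ just above $\db(x,y)$ places $\langle x,r\rangle$ in $P$ with $y \in {\bf ball}_r(x)$.

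For the second statement, fix $x_0 \in X$ and for each $m$ let $\Kb_m = \{y : \db(x_0,y) \leq m\}$ be the closed ball, which is compact by bounded compactness. I would first note that each $\Kb_m$ is itself separable: the countable set of those $x \in X$ with $\db(x_0,x) < m+1$ is dense in $\Kb_m$. Since every point $y$ satisfies $\db(x_0,y) \leq m$ for some $m$, we have $\Cb = \bigcup_m (\Cb \cap \Kb_m)$, so it suffices to produce a countable dense subset $D_m$ of each set $\Cb \cap \Kb_m$ and take $\bigcup_m D_m$; density of the union in $\Cb$ is immediate since any point of $\Cb$ lies in some $\Kb_m$. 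Now $\Kb_m$ is separable and compact, hence totally bounded by the preceding theorem, so for each $k$ it has a finite $1/k$-net $S_k$. For each $s \in S_k$ I would attempt to select a point $c_{s,k} \in \Cb$ with $\db(s, c_{s,k}) < 2/k$ whenever $\Cb$ meets the closed ball of radius $1/k$ about $s$; the collection of all such $c_{s,k}$ is then countable and dense in $\Cb \cap \Kb_m$, because any $c \in \Cb \cap \Kb_m$ lies within $1/k$ of some $s \in S_k$, forcing the selection to fire and yielding $\db(c, c_{s,k}) < 3/k$.

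The hard part is this selection step, that is, locating an actual point of $\Cb$ inside a prescribed ball once $\Cb$ is known to meet a slightly smaller one. This is where compactness does the real work. Writing $\Cb = \Xb - \Ub$ with $\Ub$ open and witnessed by a countable family of balls, the set $\Cb \cap \overline{{\bf ball}}_{1/k}(s)$ is compact, and membership in $\Cb$ is decidable for each individual point by numerical omniscience over the witness of $\Ub$. I would build the desired point as the limit of a Cauchy sequence produced by the standard nested-compact-sets construction: at each stage the finitely many balls of half the previous radius cover the current compact piece, and since that piece is nonempty at least one sub-ball provably still meets $\Cb$; a finite search using the approximate decidability of rational comparisons selects such a sub-ball. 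The resulting sequence is Cauchy, and its limit lies in $\Cb$ precisely because $\Cb$ is closed. The dichotomy ``either such a point is produced or $\Cb$ avoids $\overline{{\bf ball}}_{1/k}(s)$'' then follows by dovetailing this construction against a search, via the open-cover form of compactness, for a finite subfamily of the witness of $\Ub$ covering $\overline{{\bf ball}}_{1/k}(s)$.
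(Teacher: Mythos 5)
Your proof of the first statement is essentially the paper's: the paper takes as witness the pairs $\langle x,r\rangle \in X \times Q^+$ with $\db(x,y) \geq r$ for all $y \in C$, and your factor-of-two variant is the same argument. That part is fine.

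The second statement has a genuine gap, located exactly where you yourself say the hard part is. Your skeleton (closed balls $\Kb_m$, finite $1/k$-nets, selected points $c_{s,k}$) is reasonable, but every selection requires you to \emph{decide}, for a given closed ball $\overline{{\bf ball}}_{1/k}(s)$ --- and then again for each sub-ball at each stage of your nested construction --- whether it meets $\Cb$. In CM that statement is an existential quantification over second order objects (points of $\Cb$), so excluded middle is not available for it and numerical omniscience does not apply; decidability of membership of an \emph{individual} point in $\Cb$, which you correctly note, does not help, because one cannot search through the points of a ball. The same objection defeats the dovetailing: ``these finitely many witness balls of $\Ub$ cover $\overline{{\bf ball}}_{1/k}(s)$'' is a universal statement over the points of the ball, again a second order quantification, so your proposed search is not a countable search with decidable steps (moreover the paper's Definition 3.35 states compactness via the finite intersection property, so an open-cover form would itself need proof). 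What is missing is the device the paper's proof is built on: replace ``$\overline{{\bf ball}}_r(x)$ meets $\Cb$'' by a surrogate quantified \emph{only over countable data} (the dense set $X$, the naturals, and the countable witness $(\langle x_i,r_i\rangle)$ for $\Cb$), namely: for every $n$ there exists $y \in X$ with $\db(x,y) < r + 1/n$ and $\db(x_i,y) \geq r_i - 1/n$ for $1 \leq i \leq n$. This condition is decidable by numerical omniscience; if it fails the ball misses $\Cb$, and if it holds, bounded compactness (sequential compactness, Theorem 3.36 (iii)) turns the approximating sequence $(y_n)$ into a limit point $y$ with $\db(x,y) \leq r$ and $\db(x_i,y) \geq r_i$ for all $i$, i.e.\ $y \in \Cb$; dependent choice then collects one such point per pair. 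Your guiding intuition --- compactness converts approximate intersection into actual intersection --- is exactly right, but without the countable-data surrogate neither the finite search among sub-balls nor the dichotomy is licensed in CM; and once the surrogate is written down, the nested-ball machinery is unnecessary, since a single application of it plus sequential compactness is the entire proof.
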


\begin{proof}
The first statement is easy: let $X$ be a countable dense subset of
$\Xb$; then a witness for the closure of any countable set $C$ is
given by the pairs $\langle x,r\rangle \in X\times Q^+$ such that
$\db(x,y) \geq r$ for all $y \in C$. For the second statement suppose
$\Xb$ is also boundedly compact and let $\Cb \subseteq \Xb$ be closed.
We construct, for each $x \in X$ and $r \in Q^+$ such that
$\overline{{\bf ball}}_r(x)$ intersects $\Cb$, an element of
$\overline{{\bf ball}}_{r}(x) \cap \Cb$.
This produces a countable subset of $\Cb$ that is evidently dense in
$\Cb$. To do this, fix a witness $P$ for $\Cb$ and enumerate $P$ as
$(\langle x_n,r_n\rangle)$. Let $R$ be the set of pairs
$\langle x, r\rangle \in X \times Q^+$ such that for any $n$ there exists
$y \in X \cap {\bf ball}_{r + 1/n}(x)$ with $\db(x_i,y) \geq r_i - 1/n$
for $1 \leq i \leq n$. This set exists by numerical omniscience. Observe
that if $\langle x,r\rangle \not\in R$ then $\overline{{\bf ball}}_r(x) \cap \Cb
= \emptyset$. But if $\langle x,r\rangle \in R$ then we can find a sequence
$(y_n)$ such that $\db(x,y_n) < r + 1/n$ and $\db(x_i,y_n) \geq r_i - 1/n$
for $1 \leq i \leq n$. Letting $y$ be a limit point of this sequence
(using bounded compactness and Theorem 3.36 (iii)), we must have
$\db(x_i,y) \geq r_i$ for all $i$, i.e., $y \in \Cb$. By dependent choice
we can select one such $y$ for each pair $\langle x,r\rangle \in R$; this
is the desired countable dense subset of $\Cb$.
\end{proof}

\begin{defi}
A function $\fb: \Xb \to \Yb$ between metric spaces is {\it continuous}
if the inverse image of any open set in $\Yb$ is open in $\Xb$. It is
a {\it homeomorphism} if it is a bijection and its inverse is also
continuous.
\end{defi}

\begin{theo}
Let $\Xb$ and $\Yb$ be metric spaces, suppose $\Xb$ is separable,
and let $\fb: \Xb \to \Yb$ be a function. Then the following are
equivalent:

(i) $\fb$ is continuous;

(ii) the inverse image of every closed set in $\Yb$ is closed in $\Xb$;

(iii) for any countable set $C \subseteq \Xb$ with closure
$\overline{C}$ we have $x \in \overline{C}$ $\Rightarrow$
$f(x) \in \overline{\fb(C)}$;

(iv) $\fb$ preserves convergence of sequences;

(v) for every $x \in \Xb$ and every $\epsilon > 0$ there exists
$\delta > 0$ such that $\db(x, y) < \delta$ implies $\db(\fb(x),\fb(y))
< \epsilon$.
\end{theo}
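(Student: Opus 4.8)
The plan is to follow the template of the proof of Theorem 3.27, the role played there by the rational points of $\Rb$ now being taken over by a countable dense subset $X \subseteq \Xb$, whose existence is guaranteed by the separability hypothesis. As in that proof, the implications (i) $\Rightarrow$ (ii) $\Rightarrow$ (iii) $\Rightarrow$ (iv) should go through by standard arguments and do not use separability in an essential way: (i) $\Rightarrow$ (ii) by passing to complements and using that $\fb^{-1}$ commutes with complementation, (ii) $\Rightarrow$ (iii) by the observation that the preimage of a closed set containing $\fb(C)$ is a closed set containing $C$ and hence $\overline{C}$, and (iii) $\Rightarrow$ (iv) by a routine subsequence argument. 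The two steps requiring care in CM are (iv) $\Rightarrow$ (v) and (v) $\Rightarrow$ (i).

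For (iv) $\Rightarrow$ (v) I would first establish the $\epsilon$-$\delta$ condition for $x$ and $y$ ranging over the countable dense set $X$, with $\epsilon$ and $\delta$ restricted to rational values. The decisive point is that, since $X$ is countable and the tolerances are rational, numerical omniscience furnishes the law of excluded middle for the relevant quantified statement; this licenses a proof by contradiction, for if the restricted condition failed one could extract a point $x \in X$ and a sequence $(y_n) \subseteq X$ with $y_n \to x$ but $\fb(y_n) \not\to \fb(x)$, contradicting (iv). Density of $X$ together with (iv) then upgrades this rational, restricted statement to the full $\epsilon$-$\delta$ condition for every $x \in \Xb$.

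For (v) $\Rightarrow$ (i), let $\Ub \subseteq \Yb$ be open with countable witness $P \subseteq \Yb \times \Rb^+$. I would construct a witness $P'$ for $\fb^{-1}(\Ub)$ as the set of pairs $\langle x', s'\rangle \in X \times Q^+$ such that $\fb(X \cap {\bf ball}_{s'}(x')) \subseteq {\bf ball}_{r - \epsilon}(y)$ for some $\langle y, r\rangle \in P$ and some rational $\epsilon > 0$. The defining condition is a universal assertion over the countable set $X \cap {\bf ball}_{s'}(x')$ followed by an existential search over the countable data in $P$ and the rational $\epsilon$, so a double application of numerical omniscience shows that it satisfies excluded middle and hence that $P'$ exists by third order comprehension. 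One then verifies, using the $\epsilon$-$\delta$ condition (v) and the density of $X$, that the open set $\Ub'$ witnessed by $P'$ coincides with $\fb^{-1}(\Ub)$.

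The main obstacle, exactly as in Theorem 3.27, is not the classical mathematical content but the verification that the auxiliary objects---above all the witness $P'$---genuinely exist within CM. Because a general separable metric space carries no ambient rational structure, every construction must be performed relative to the fixed countable dense set $X$, with all auxiliary radii and tolerances confined to the rationals; this is precisely what keeps the governing conditions countably checkable, hence within reach of numerical omniscience and third order comprehension.
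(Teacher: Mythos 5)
Your proposal is correct and is exactly what the paper intends: the paper's entire proof of this theorem is the remark that it is a straightforward generalization of the proof of Theorem 3.27, and you carry out precisely that generalization, with the countable dense subset $X$ (and rational radii) replacing the rationals of $\Rb$ in the two delicate steps (iv) $\Rightarrow$ (v) and (v) $\Rightarrow$ (i), including the same witness construction for $P'$ and the same use of numerical omniscience plus comprehension.
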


(The proof is a straightfoward generalization of the proof of Theorem 3.27.)

\begin{prop}
Let $\Xb$ and $\Yb$ be metric spaces and suppose $\Xb$ is compact.

(a) Every closed subset of $\Xb$ is compact.

(b) If $\Xb$ is separable and $\fb: \Xb \to \Yb$ is continuous
then $\fb(\Xb)$ exists and is a separable compact subset of $\Yb$.

(c) If $\Xb$ is separable and $\fb: \Xb \to \Yb$ is a continuous
bijection then it is a homeomorphism.
\end{prop}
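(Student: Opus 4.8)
For part (a) I would argue directly from the finite-intersection definition of compactness (Definition 3.35), since $\Xb$ is not assumed separable. Let $\Cb \subseteq \Xb$ be closed and let $(\Eb_n)$ be subsets of $\Cb$ that are closed in $\Cb$ and have the finite intersection property; the goal is $\bigcap_n \Eb_n \neq \emptyset$. The key step is to show that each $\Eb_n$ is already closed in $\Xb$: if $P_n \subseteq \Cb \times \Rb^+$ witnesses $\Cb - \Eb_n$ as open in $\Cb$, then, regarded as a subset of $\Xb \times \Rb^+$, it determines an open set $\Ub_n \subseteq \Xb$ with $\Ub_n \cap \Cb = \Cb - \Eb_n$ (the balls of $\Cb$ are the traces on $\Cb$ of the balls of $\Xb$), so that $(\Xb - \Cb) \cup \Ub_n = \Xb - \Eb_n$ is a union of two open sets and hence open by Theorem 3.33(b). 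Then $(\Eb_n)$ is a sequence of closed subsets of $\Xb$ with the finite intersection property, and compactness of $\Xb$ gives $\bigcap_n \Eb_n \neq \emptyset$.

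For part (b) the real difficulty is the conceptualist one that $\fb(\Xb)$ has no a priori existence, so I would realize it as an already-existing set. Let $X$ be a countable dense subset of $\Xb$; then $\fb(X)$ exists and is countable by Proposition 3.9, and its closure $\overline{\fb(X)}$ exists because closures of countable sets always exist. I would prove $\fb(\Xb) = \overline{\fb(X)}$. Since every point of $\Xb$ lies in $\overline X$, Theorem 3.39(iii) gives $\fb(x) \in \overline{\fb(X)}$ for all $x \in \Xb$; conversely, for $w \in \overline{\fb(X)}$ I would choose (by dependent choice) a sequence in $\fb(X)$ converging to $w$, lift it to a sequence $(x_n)$ in $X$, extract $x_{n_k} \to x$ using that $\Xb$ is sequentially compact (Theorem 3.36), and conclude $\fb(x) = w$ by Theorem 3.39(iv). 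Thus $w \in \fb(\Xb)$ is equivalent to the decidable condition $w \in \overline{\fb(X)}$, so $\fb(\Xb)$ exists and equals $\overline{\fb(X)}$. It is separable, with $\fb(X)$ as a countable dense subset, and it is compact because any sequence in it lifts (again by dependent choice) to a sequence in $\Xb$ with a convergent subsequence whose image converges; sequential compactness then gives compactness by Theorem 3.36.

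For part (c), bijectivity gives $\Yb = \fb(\Xb)$, which is separable and compact by (b), and the transpose $\fb^{-1}$ exists as a function by third order comprehension. I would prove $\fb^{-1}$ continuous by verifying condition (ii) of Theorem 3.39 (legitimate since $\Yb$ is separable): for each closed $\Cb \subseteq \Xb$ the set $(\fb^{-1})^{-1}(\Cb) = \fb(\Cb)$ must be closed in $\Yb$. Here $\Cb$ is compact by (a) and separable by Proposition 3.37 (a compact space is boundedly compact, since its closed balls are closed and hence compact by (a)), so part (b) applied to the restriction $\fb|_\Cb$ shows $\fb(\Cb)$ is a separable compact subset of $\Yb$. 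The remaining point is that a separable compact subset of $\Yb$ is closed: choosing a countable dense subset $D$ of $\fb(\Cb)$, I would check $\fb(\Cb) = \overline D$ --- the inclusion $\subseteq$ is density, and $\supseteq$ uses sequential compactness of $\fb(\Cb)$ to capture every limit of a sequence from $D$ --- and $\overline D$ is closed by Proposition 3.37. This establishes Theorem 3.39(ii) for $\fb^{-1}$, so $\fb^{-1}$ is continuous and $\fb$ is a homeomorphism.

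The main obstacle is the same in (b) and (c): not the point-set topology but the need to replace the nonexistent direct images $\fb(\Xb)$ and $\fb(\Cb)$ by closures of countable dense subsets, which is precisely what lets them be produced as genuine sets. The supporting device throughout is the free interchange between the finite-intersection and sequential descriptions of compactness supplied by Theorem 3.36, together with dependent choice to furnish the representative sequences that the subsequence arguments require.
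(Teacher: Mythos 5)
Your proposal is correct and takes essentially the same route as the paper: part (a) spells out the paper's ``trivial'' observation that closed subsets of $\Cb$ are closed in $\Xb$, part (b) realizes $\fb(\Xb)$ as the closure of $\fb(X)$ using Theorem 3.39(iii)/(iv) together with Theorem 3.36, and part (c) combines (a), (b), both parts of Proposition 3.37, and the closed-set criterion of Theorem 3.39(ii) applied to $\fb^{-1}$. The only divergence is minor: for compactness of the image the paper pulls back closed sets via Theorem 3.39(ii) (viewing $\fb$ as a map onto its image), whereas you verify sequential compactness by lifting sequences with dependent choice and then appeal to Theorem 3.36 --- both are legitimate in CM and of comparable length.
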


\begin{proof}
Part (a) is trivial since every closed subset of a closed subset of
$\Xb$ is closed in $\Xb$. For part (b) let $X$ be a countable dense subset
of $\Xb$ and let $\Cb$ be the closure of $\fb(X)$. Theorem 3.39 (iii)
implies that $\fb$ maps every element of $\Xb$ into $\Cb$, and Theorem
3.36 (iii) plus Theorem 3.39 (iv) implies that every element of $\Cb$
is in the image of $\fb$. So $\fb(\Xb)$ exists and equals $\Cb$. $\Cb$ is
clearly separable, and compactness follows easily from Theorem 3.39 (ii)
(considering $\fb$ as a function from $\Xb$ to $\Cb$). Part (c) is proven
by combining parts (a) and (b) with both parts of Proposition 3.37,
using the charaterization of continuity of $\fb^{-1}$ in Theorem 3.39 (ii).
($Y$ is separable because it is the closure of $\fb(X)$, as in part (b).)
\end{proof}

\begin{theo}
The intersection of any sequence of open dense subsets of a
separable complete metric space is dense.
\end{theo}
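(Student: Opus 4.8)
The plan is to run the classical nested-ball argument for the Baire category theorem, extracting each successive ball by dependent choice (axiom (8)) and obtaining the final point by completeness. Write $\Xb$ for the separable complete metric space and let $(\Ub_n)$ be the given sequence of open dense subsets. Since density of a subset means that it meets every open ball, it suffices to fix an arbitrary open ball ${\bf ball}_{r_0}(x_0)$ and produce a single point of $\Xb$ lying in ${\bf ball}_{r_0}(x_0)$ and in every $\Ub_n$; this point then witnesses that $\bigcap \Ub_n$ is dense. A point worth noting at the outset is that although a witness $P_n \subseteq \Xb \times \Rb^+$ for the open set $\Ub_n$ is prima facie a third order object, its countability lets us represent it by an \emph{enumeration} $W_n$, a sequence of second order objects and hence a single second order object whose terms $(W_n)_{(m)}$ run through the pairs $\langle y,s\rangle$ of the witness. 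Applying dependent choice to the premise ``for every $n$ there is an enumeration of a witness for $\Ub_n$'' yields a single second order object coding the whole sequence $(W_n)$, which we carry along as a parameter. Density of each $\Ub_n$, stated uniformly, reads: for all $n$, all $c \in \Xb$, and all $\rho \in Q^+$ there exists $y$ with $y \in \Ub_n$ and $\db(c,y) < \rho$; intuitionistically this existential genuinely supplies the point.

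The heart of the proof, and the step I expect to require the most care, is a single application of dependent choice that manufactures the nested balls. I would code a closed ball by a second order object $X$ representing a pair $\langle c, \rho\rangle$ with $c \in \Xb$ and $\rho \in Q^+$, and define a formula $\phi(n,X,Y)$ asserting: \emph{if} $X$ codes such a pair $\langle c_n,\rho_n\rangle$, \emph{then} $Y$ codes a pair $\langle c_{n+1},\rho_{n+1}\rangle$ with $\rho_{n+1} \le \rho_n/2$, with $\overline{{\bf ball}}_{\rho_{n+1}}(c_{n+1}) \subseteq {\bf ball}_{\rho_n}(c_n)$, and with some $\langle y,s\rangle = (W_{n+1})_{(m)}$ satisfying $\db(c_{n+1},y) + \rho_{n+1} < s$ (which forces $\overline{{\bf ball}}_{\rho_{n+1}}(c_{n+1}) \subseteq {\bf ball}_s(y) \subseteq \Ub_{n+1}$); and if $X$ codes no such pair, $Y$ is unconstrained. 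The premise $(\forall n)(\forall X)(\exists Y)\phi(n,X,Y)$ is then a theorem: for a valid $\langle c_n,\rho_n\rangle$, density of $\Ub_{n+1}$ supplies $p \in \Ub_{n+1} \cap {\bf ball}_{\rho_n}(c_n)$, the enumeration $W_{n+1}$ supplies an index $m$ with $p \in {\bf ball}_s(y)$ for $\langle y,s\rangle = (W_{n+1})_{(m)}$, and choosing a small enough rational $\rho_{n+1}$ places $\overline{{\bf ball}}_{\rho_{n+1}}(p)$ inside both ${\bf ball}_s(y)$ and ${\bf ball}_{\rho_n}(c_n)$; the invalid case is vacuous. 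The delicate points here are exactly that every existential invoked is constructive, that the padding for invalid inputs makes the premise hold \emph{unconditionally}, and that all the data is packaged at second order so that axiom (8) applies. Starting from a valid $\langle c_0,\rho_0\rangle$ with $\overline{{\bf ball}}_{\rho_0}(c_0) \subseteq {\bf ball}_{r_0}(x_0)$, dependent choice returns a sequence $Z$ with $Z_{(0)} = \langle c_0,\rho_0\rangle$ and $\phi(n,Z_{(n)},Z_{(n+1)})$ for all $n$; a routine induction shows every $Z_{(n)}$ is a valid pair, so we recover honest centers $(c_n)$ and radii $(\rho_n)$ with $\rho_n \le 2^{-n}\rho_0$, the nesting $\overline{{\bf ball}}_{\rho_{n+1}}(c_{n+1}) \subseteq {\bf ball}_{\rho_n}(c_n)$, and $\overline{{\bf ball}}_{\rho_{n+1}}(c_{n+1}) \subseteq \Ub_{n+1}$.

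Finally I would pass to the limit. The nesting gives $c_m \in \overline{{\bf ball}}_{\rho_n}(c_n)$ for all $m \ge n$, whence $\db(c_n,c_m) \le \rho_n \to 0$, so $(c_n)$ is Cauchy and converges to some $c \in \Xb$ by completeness. Continuity of the metric yields $\db(c_n,c) \le \rho_n$, i.e. $c \in \overline{{\bf ball}}_{\rho_n}(c_n)$ for every $n$; hence $c \in \Ub_n$ for all $n \ge 1$ and $c \in \overline{{\bf ball}}_{\rho_0}(c_0) \subseteq {\bf ball}_{r_0}(x_0)$. Thus $c$ lies in $\bigcap \Ub_n$ and in the prescribed ball, which proves density. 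Separability is available among the hypotheses (one could, if desired, restrict the centers $c_{n+1}$ to a fixed countable dense set), but the argument above leans only on completeness, for the limit point, and on dependent choice, for the recursive selection; excluded middle is nowhere required, as every step is a positive existential, so the proof is intuitionistically valid as demanded by CM.
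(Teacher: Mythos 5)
Your proposal is correct and takes essentially the same approach as the paper: the paper's entire proof of this theorem is the remark that it is ``identical to the classical proof,'' and your write-up is exactly that classical nested-ball Baire argument, with the CM-specific bookkeeping (second order coding of the countable witnesses, dependent choice for the recursive selection of balls, completeness for the limit point) filled in. The only blemish is your closing claim that excluded middle is nowhere required: the valid/invalid case split used to make the dependent-choice premise hold for arbitrary $X$ does invoke excluded middle, though only for a condition built from atomic membership and arithmetical clauses, which CM supplies.
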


(The proof is identical to the classical proof.)

\subsection{Topological spaces}

We introduce the notion of a family of subsets of a set.

\begin{defi}
A {\it family of subsets} of a set $\Xb$ is a subset $\Tc$ of $\Xb\times\Tb$
for some set $\Tb$. For each $Y \in \Tb$ we write $\Tc_{(Y)} =
\{x \in \Xb: \langle x,Y\rangle \in \Tc\}$. We say that $\Yb$ {\it belongs
to} the family $\Tc$ if $\Yb = \Tc_{(Y)}$ for some $Y \in \Tb$.

A {\it topological space} is a set $\Xb$ together with a family of
subsets $\Tc$ of $\Xb$ such that (i) $\emptyset$ and $\Xb$ belong to
$\Tc$; (ii) the union of any sequence of sets that belong to $\Tc$
belongs to $\Tc$; and (iii) the intersection of any finitely many sets
that belong to $\Tc$ belongs to $\Tc$. $\Tc$ is a {\it topology} on $\Xb$.

A subset of a topological space is {\it open} if it belongs to $\Tc$
and {\it closed} if its complement belongs to $\Tc$.
\end{defi}

\begin{defi}
Let $\Xb$ be a topological space with topology $\Tc \subseteq \Xb \times \Tb$
and let $\Yb \subseteq \Xb$. The {\it relative topology} $\Tc'$ on $\Yb$
is the family $\Tc' = \Tc \cap (\Yb \times \Tb)$.
\end{defi}

It is easy to see that $\Tc'$ is a topology on $\Yb$.

Next we indicate how topologies can be generated from bases.

\begin{prop}
Let $\Xb$ be a set and let $\Bc \subseteq \Xb \times \Bb$ be a family
of subsets of $\Xb$
such that $\emptyset$ and $\Xb$ belong to $\Bc$ and the intersection
of any two sets that belong to $\Bc$ is the union of a sequence of
sets that belong to $\Bc$. Let $\Tb$ be the set of $Y$ such that
$Y_{(n)} \in \Bb$ for all $n$ and let $\Tc$ be the set of pairs
$\langle x,Y\rangle$ such that $x \in \bigcup_n \Bc_{(Y_{(n)})}$.
Then $\Tc$ is a topology on $\Xb$.
\end{prop}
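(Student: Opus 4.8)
The plan is to verify the three closure conditions of Definition 3.42, after first confirming that $\Tb$ and $\Tc$ are legitimate objects of CM. For $\Tb$ the defining condition $(\forall n)(Y_{(n)} \in \Bb)$ is a universal quantification over $N$ of the third order atomic formula $Y_{(n)} \in \Bb$, which satisfies excluded middle; numerical omniscience upgrades this to excluded middle for the whole condition, so $\Tb$ exists by third order comprehension. Likewise, for $Y \in \Tb$ the membership condition defining $\Tc$ unfolds to $(\exists n)(\langle x, Y_{(n)}\rangle \in \Bc)$, a numerical existential whose disjuncts are third order atomic, so numerical omniscience again yields excluded middle and third order comprehension cuts $\Tc$ out of $\Xb \times \Tb$. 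The upshot I would record is the identity $\Tc_{(Y)} = \bigcup_n \Bc_{(Y_{(n)})}$, valid for every $Y \in \Tb$.

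Condition (i) is then immediate. Since $\emptyset$ and $\Xb$ belong to $\Bc$, fix indices $Z_0, W_0 \in \Bb$ with $\Bc_{(Z_0)} = \emptyset$ and $\Bc_{(W_0)} = \Xb$; the constant sequences with values $Z_0$ and $W_0$ lie in $\Tb$ and satisfy $\Tc_{(Y)} = \emptyset$ and $\Tc_{(Y)} = \Xb$. For condition (iii) it suffices, by induction, to treat the intersection of two members $\Tc_{(Y)}$ and $\Tc_{(Y')}$. Distributing gives $\Tc_{(Y)} \cap \Tc_{(Y')} = \bigcup_{n,m}(\Bc_{(Y_{(n)})} \cap \Bc_{(Y'_{(m)})})$, and the basis hypothesis says each term on the right is the union of a sequence of members of $\Bc$. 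I would use dependent choice (axiom (8)) to select, for every pair $(n,m)$, an index sequence $(Z^{n,m}_j)_j$ in $\Bb$ with $\bigcup_j \Bc_{(Z^{n,m}_j)} = \Bc_{(Y_{(n)})} \cap \Bc_{(Y'_{(m)})}$, then reindex the triple family $(Z^{n,m}_j)$ as a single $\Bb$-valued sequence whose associated $\Tc$-set is $\Tc_{(Y)} \cap \Tc_{(Y')}$.

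For condition (ii), given a sequence whose $k$th term belongs to $\Tc$, I would first invoke dependent choice to select a sequence $(Y^k)$ in $\Tb$ with $k$th term equal to $\Tc_{(Y^k)} = \bigcup_n \Bc_{(Y^k_{(n)})}$, exactly as witnesses are selected in the proof of Proposition 3.15(b). Fixing a bijection between $N$ and $N \times N$, I would flatten the double family $(Y^k_{(n)})$ into a single sequence $Y$; each term of $Y$ equals some $Y^k_{(n)} \in \Bb$, so $Y \in \Tb$, and $\Tc_{(Y)} = \bigcup_{k,n} \Bc_{(Y^k_{(n)})} = \bigcup_k \Tc_{(Y^k)}$ is the desired union. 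I expect the main obstacle to be the reindexing bookkeeping common to (ii) and (iii): although these facts are trivialities in ordinary topology, here one must repackage a countable union of countable unions as a single $\Bb$-valued sequence, which forces both the uniform extraction of witnessing index sequences by dependent choice and the use of an honest bijection $N \cong N \times N$ so that no spurious index escapes $\Bb$. The existence of $\Tb$ and $\Tc$, by contrast, is a routine application of the numerical-omniscience-plus-comprehension technique used repeatedly above.
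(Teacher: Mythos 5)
The paper states this proposition without proof, so there is nothing to compare against line by line; your argument is correct and is exactly the routine verification the author evidently had in mind, using the paper's standard toolkit: numerical omniscience plus third order comprehension to establish that $\Tb$ and $\Tc$ exist, dependent choice to extract witnessing index sequences (precisely as in the paper's proof of Proposition 3.15(b)), and a pairing bijection $N \cong N \times N$ to flatten double unions back into single $\Bb$-valued sequences. Your closing remark correctly identifies the only genuinely non-classical content here, namely that witnesses must be chosen uniformly and repackaged so that unions of unions are again indexed over $N$.
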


\begin{defi}
The family $\Bc$ in Proposition 3.44 is a {\it base} for the topology $\Tc$.
\end{defi}

\begin{prop}
Let $\Xb$ be a separable metric space with countable dense subset $X$.
Let $\Bb = X \times Q^+$ and let $\Bc \subseteq \Xb \times \Bb$ be the
set of pairs $\langle y,\langle x,r\rangle\rangle$ such that
$\db(x,y) < r$. Then $\Bc$ is a base for a topology and the
open sets are precisely those identified in Definition 3.32.
\end{prop}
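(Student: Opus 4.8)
The plan is to check that $\Bc$ satisfies the two hypotheses of Proposition 3.44 and then to prove that the topology it generates has exactly the open sets of Definition 3.32. First I would observe that $\Bc_{(\langle x,r\rangle)} = \{y \in \Xb : \db(x,y) < r\} = {\bf ball}_r(x)$, so the members of $\Bc$ are precisely the open balls with center in the dense set $X$ and rational radius. The substantive hypothesis of Proposition 3.44 is that the intersection of two members of $\Bc$ is a union of a sequence of members of $\Bc$; this is exactly the computation performed in the proof of Theorem 3.33, where the pairs $\langle y,s\rangle \in X \times Q^+$ satisfying $\db(x,y) + s \le r$ and $\db(x',y) + s \le r'$ index a sequence of basic balls whose union is ${\bf ball}_r(x) \cap {\bf ball}_{r'}(x')$. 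Proposition 3.44 also demands that $\emptyset$ and $\Xb$ belong to the base, so I would formally adjoin two indices naming these sets; this leaves the intersection condition intact (since $\emptyset \cap \Yb = \emptyset$ and $\Xb \cap \Yb = \Yb$), and since $\Xb = \bigcup_{y \in X} {\bf ball}_1(y)$ by density it adds nothing to the generated topology beyond $\emptyset$ itself.

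By Proposition 3.44 the resulting open sets are precisely the sets $\bigcup_n {\bf ball}_{s_n}(y_n)$ obtained from sequences $(\langle y_n, s_n\rangle)$ in $X \times Q^+$ (together with $\emptyset$). One inclusion into the open sets of Definition 3.32 is immediate: such a countable union of basic balls is witnessed by $P = \{\langle y_n, s_n\rangle\} \subseteq \Xb \times \Rb^+$, which is countable, so it is open in the sense of Definition 3.32.

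For the reverse inclusion I would start with an arbitrary Definition 3.32 open set $\Ub = \bigcup_{\langle x,r\rangle \in P} {\bf ball}_r(x)$, where $P \subseteq \Xb \times \Rb^+$ is countable, and rewrite it as a countable union of basic balls. The candidate is the set $P'$ of pairs $\langle y,s\rangle \in X \times Q^+$ for which there exists $\langle x,r\rangle \in P$ with $\db(x,y) + s < r$; I claim $\Ub = \bigcup_{\langle y,s\rangle \in P'} {\bf ball}_s(y)$. The inclusion $\supseteq$ is the triangle inequality: $\db(x,y) + s < r$ gives ${\bf ball}_s(y) \subseteq {\bf ball}_r(x) \subseteq \Ub$. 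For $\subseteq$, given $z \in \Ub$ I would choose $\langle x,r\rangle \in P$ with $\db(x,z) < r$, then a rational $s$ with $0 < s < (r - \db(x,z))/2$, and then, using density, a point $y \in X$ with $\db(y,z) < s$; then $\langle y,s\rangle \in P'$ and $z \in {\bf ball}_s(y)$.

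The step I expect to be the main obstacle --- and the only place the argument genuinely differs from the classical one --- is confirming that $P'$ exists as a third order object, i.e.\ that its defining condition satisfies the law of excluded middle, so that third order comprehension applies. Since $X \times Q^+$ is countable, this follows by a double application of numerical omniscience, in the same spirit as the construction of closures following Definition 3.35: for each fixed $\langle x,r\rangle$ the comparison of Dedekind cuts $\db(x,y) + s < r$ is decidable because membership of a rational in a cut is decidable, and the remaining existential quantifier ranges over the countable set $P$, so it too is resolved by numerical omniscience. This is precisely what licenses the passage from arbitrary centers and real radii to the countable base, and it is the feature that forces separability into the hypothesis.
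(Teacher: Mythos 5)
The paper states this proposition without proof, so there is no official argument to compare against; judged on its own, your proof is correct and is precisely the argument the surrounding text intends. You verify the hypotheses of Proposition 3.44 via the intersection-witness computation from the proof of Theorem 3.33, prove the two inclusions classically, and isolate the one genuinely CM-specific step --- the existence of the witness set $P'$ --- which you discharge by the combination of numerical omniscience and third order comprehension that the paper introduces in Lemma 3.11 and uses ``without comment'' thereafter (the decidability of $\db(x,y)+s<r$ reduces, as in Proposition 3.4, to excluded middle for formulas without second or third order quantifiers once $\db(x,y)$ is constructed). Your adjunction of $\emptyset$ and $\Xb$ to $\Bc$ also repairs a small imprecision in the proposition's own statement (no ball is empty, and $\Xb$ need not be a ball, so $\Bc$ as literally defined does not meet the hypotheses of Proposition 3.44), and your disposal of it --- $\Xb = \bigcup_{y\in X}{\bf ball}_1(y)$ by density, with $\emptyset$ needed anyway to express empty intersections as unions of sequences from the base --- is the right one.
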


\begin{defi}
Let $(\Xb_n)$ be a sequence of sets and let $(\Tc^n)$ be
a corresponding sequence of topologies. Let $\Bb$ consist of all pairs
$\langle m,Y\rangle$ such that $m \in N$ and $Y \in \Tb^1 \times \cdots
\times \Tb^m$. The {\it product topology} on the product $\prod \Xb_n$ is
the topology generated by the base $\Bc \subseteq (\prod \Xb_n) \times \Bb$
consisting of all pairs $\langle x,\langle m,Y\rangle\rangle$ such that
$x_{(n)} \in \Tc^n_{(Y_{(n)})}$ for all $n \leq m$.
\end{defi}

\begin{defi}
A function between topological spaces is {\it continuous} if the
inverse image of any open set is open.
\end{defi}

\begin{prop}
The composition of two continuous functions is continuous.
\end{prop}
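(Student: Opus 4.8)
The plan is to reduce the statement to two facts: that the underlying composite operation exists as a function, and that inverse images of subsets exist (Proposition 3.4). Classically there is nothing to prove beyond the set identity $(\gb\circ\fb)^{-1}(\Ub) = \fb^{-1}(\gb^{-1}(\Ub))$; in CM the only genuine content is that the third order objects in sight actually exist, and for this one must check that the relevant defining conditions satisfy excluded middle. So let $\fb: \Xb \to \Yb$ and $\gb: \Yb \to \Zb$ be continuous functions between topological spaces.

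First I would construct the composite. Because $\fb$ is a function, for each $X \in \Xb$ we may form the value $\fb(X) \in \Yb$, and then $\gb(\fb(X)) \in \Zb$; by the intuitionistic reading of the universally quantified existence clause in the definition of a function (Definition 3.3), these constructions are available uniformly in $X$. I would therefore define $\gb \circ \fb$ to be the collection of pairs $\langle X,Z\rangle \in \Xb \times \Zb$ with $\langle \fb(X), Z\rangle \in \gb$. For a fixed pair $\langle X, Z\rangle$ this condition is, once the value $Y = \fb(X)$ has been constructed, simply the assertion that the coded pair $\langle Y, Z\rangle$ lies in the third order object $\gb$, which satisfies excluded middle; hence $\gb \circ \fb$ exists by third order comprehension. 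That it is in fact a function, with unique value $\gb(\fb(X))$ up to the identity on $\Zb$ and compatible with the identities on $\Xb$ and $\Zb$, follows directly from the corresponding properties of $\fb$ and $\gb$ and the definition of the identity on a product.

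Next I would prove continuity. Let $\Ub \subseteq \Zb$ be open, i.e.\ belong to the topology on $\Zb$. By Proposition 3.4 the inverse image $\gb^{-1}(\Ub)$ exists as a subset of $\Yb$, and by continuity of $\gb$ it is open in $\Yb$; applying Proposition 3.4 once more, $\fb^{-1}(\gb^{-1}(\Ub))$ exists as a subset of $\Xb$, and by continuity of $\fb$ it is open in $\Xb$. It then remains to verify the set identity $(\gb\circ\fb)^{-1}(\Ub) = \fb^{-1}(\gb^{-1}(\Ub))$, a routine element chase: $X$ lies on the left iff $\gb(\fb(X)) \in \Ub$ iff $\fb(X) \in \gb^{-1}(\Ub)$ iff $X$ lies on the right. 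Since openness is a property of the subset itself (membership in the topology) and the two sides are equal, $(\gb\circ\fb)^{-1}(\Ub)$ is open, which is what the definition of continuity (Definition 3.48) requires.

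The one place that demands care is the existence of the composite as an honest third order object. The naive definition involves an existential second order quantifier ($\exists Y$ with $\langle X,Y\rangle \in \fb$ and $\langle Y,Z\rangle \in \gb$), which does not obviously satisfy excluded middle; the point is that functionality of $\fb$ lets us eliminate this quantifier by passing to the constructed value $\fb(X)$, leaving a condition about membership in $\gb$ that does satisfy excluded middle. I expect this --- rather than the continuity argument, which is purely formal once the inverse images are known to exist --- to be the main, and essentially the only, obstacle.
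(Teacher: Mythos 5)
Your proof is correct; the paper actually states this proposition with no proof at all, treating it as routine, and your argument is exactly the intended one: the classical identity $(\gb\circ\fb)^{-1}(\Ub) = \fb^{-1}(\gb^{-1}(\Ub))$ combined with Definition 3.48 and two applications of Proposition 3.4. Your additional care over the existence of the composite as a third order object --- eliminating the second order existential quantifier by appealing to functionality of $\fb$, so that the defining condition satisfies excluded middle and third order comprehension applies --- is precisely the CM-specific point the paper leaves implicit, and it mirrors the technique of the paper's own proof of Proposition 3.4.
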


\begin{prop}
Let $\Xb$ be a topological space and let $(\Xb_n)$ be a sequence of
topological spaces. Then a function $\fb: \Xb \to \prod \Xb_n$ is
continuous if and only if $\pi_n\circ\fb: \Xb \to \Xb_n$ is continuous
for all $n$, where $\pi_n$ is the projection onto the $n$th coordinate.
\end{prop}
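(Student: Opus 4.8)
The plan is to reduce everything to the behaviour of $\fb$ on the generating basic sets of the product topology (Definition 3.47), using Proposition 3.44 to describe the open sets of $\prod\Xb_n$ and Proposition 3.4 to guarantee that the inverse images in question exist as third order objects.

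For the forward direction I would first observe that each projection $\pi_n\colon\prod\Xb_m\to\Xb_n$, sending $x$ to $x_{(n)}$, is a function in the sense of Definition 3.3 (the product identity is componentwise, so $\pi_n$ respects identity) and is continuous: the preimage $\pi_n^{-1}(\Vb)$ of a basic open set $\Vb$ of $\Xb_n$ is exactly the generating basic set of the product that constrains only the $n$th coordinate to lie in $\Vb$, and a single basic set is open by Proposition 3.44 (use the constant sequence of indices). Since $\fb$ is assumed continuous, Proposition 3.49 immediately gives that each composite $\pi_n\circ\fb$ is continuous.

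For the reverse direction, assume every $\pi_n\circ\fb$ is continuous and let $\Ub\subseteq\prod\Xb_m$ be open; I must show $\fb^{-1}(\Ub)$ is open in $\Xb$. By Proposition 3.44 the set $\Ub$ comes equipped with a witness, so $\Ub=\bigcup_n\Bc_{(Y_{(n)})}$ is a countable union of basic sets, where each $Y_{(n)}=\langle m_n,Z^{(n)}\rangle$ gives $\Bc_{(Y_{(n)})}=\{x:x_{(j)}\in\Tc^j_{(Z^{(n)}_{(j)})}\text{ for all }j\leq m_n\}$. Pulling back commutes with unions, and each basic set pulls back to the finite intersection $\fb^{-1}(\Bc_{(Y_{(n)})})=\bigcap_{j\leq m_n}(\pi_j\circ\fb)^{-1}(\Tc^j_{(Z^{(n)}_{(j)})})$. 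Each factor is open by the continuity hypothesis, so by the finite-intersection axiom for topologies (Definition 3.42(iii)) the set $\fb^{-1}(\Bc_{(Y_{(n)})})$ is open for every $n$.

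The remaining step, passing from ``each pulled-back basic set is open'' to ``their union is open'', is where I expect the only genuine subtlety to lie. The countable-union axiom (Definition 3.42(ii)) is stated for an actual sequence of subsets, i.e.\ a third order object $\Dc\subseteq N\times\Xb$, so I must exhibit the family $\Dc=\{\langle n,x\rangle:\fb(x)\in\Bc_{(Y_{(n)})}\}$ rather than merely assert that each $\fb^{-1}(\Bc_{(Y_{(n)})})$ is open. No choice principle is needed for this: membership of $\fb(x)$ in a basic set $\Bc_{(Y_{(n)})}$ is a finite conjunction of the decidable conditions $\fb(x)_{(j)}\in\Tc^j_{(Z^{(n)}_{(j)})}$, so the reasoning of Proposition 3.4 shows that the defining condition for $\Dc$ satisfies the law of excluded middle and $\Dc$ therefore exists by third order comprehension. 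Since $\Dc_{(n)}=\fb^{-1}(\Bc_{(Y_{(n)})})$ is open for each $n$, Definition 3.42(ii) finally yields that $\bigcup_n\Dc_{(n)}=\fb^{-1}(\Ub)$ is open, which completes the proof.
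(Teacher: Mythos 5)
Your proof is correct. The paper actually states this proposition without any proof (it is one of several results in Section 3.6 left as routine), so there is nothing to compare against; but your argument is exactly the one the paper's framework calls for: describe open sets of $\prod \Xb_n$ as countable unions of the generating basic sets via Proposition 3.44 and Definition 3.47, get the forward direction from continuity of the projections plus Proposition 3.49, and pull basic sets back to finite intersections for the reverse direction. Moreover, you correctly isolate and resolve the one genuinely CM-specific subtlety: axiom (ii) of Definition 3.42 applies to an actual sequence of subsets, i.e.\ a third order object $\Dc \subseteq N \times \Xb$, and you show $\Dc$ exists by third order comprehension because membership of $\fb(x)$ in a basic set is a finite conjunction of conditions that satisfy excluded middle (by the reasoning of Proposition 3.4), so no choice principle is needed --- this is precisely the kind of existence argument the paper uses throughout Section 3.
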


\begin{defi}
A topological space is {\it second countable} if it has a countable base
(i.e., $\Bb$ is countable). A subset $\Cb$ of a topological space is
{\it sequentially closed} if the limit of any convergent sequence in $\Cb$
belongs to $\Cb$. The {\it sequential closure} of a countable set $C$ in a
second countable space is the set of limit points of convergent sequences
in $C$.
\end{defi}

In the last part of this definition
we need $C$ to be countable and the ambient space to be
second countable so that the statement ``$x$ is the limit of a convergent
sequence in $C$'' satisfies excluded middle. This statement will be true
if and only if every basic open set that contains $x$ intersects $C$.

\begin{prop}
Any second countable space is separable.
\end{prop}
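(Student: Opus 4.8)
The plan is to run the classical argument in constructive dress: enumerate the countable base, select one point from each inhabited basic open set, and check that the resulting countable set is dense. Since $\Xb$ is second countable, fix a surjection from $N$ onto the index set $\Bb$, enumerating it as $(Y_n)$, so that the basic open sets are listed as $(\Bc_{(Y_n)})$. Here density of a countable $D \subseteq \Xb$ should mean that $D$ meets every inhabited basic open set (equivalently, by Proposition 3.44, every inhabited open set), the natural topological analogue of the metric notion in Definition 3.30. If I can produce, for each $n$ with $\Bc_{(Y_n)}$ inhabited, a witness $x_n \in \Bc_{(Y_n)}$, then $D = \{x_n\}$ is countable by construction and density is immediate: any inhabited basic open set is some $\Bc_{(Y_n)}$, and $x_n \in D$ lies in it. That $D$ is a genuine subset in the sense of Definition 3.3 is routine.

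The mechanism for making infinitely many selections at once is dependent choice (axiom (8)), which in CM already yields ordinary countable choice: from a uniform proof of $(\forall n)(\exists X)\theta(n,X)$ one extracts a sequence $(x_n)$ with $\theta(n,x_n)$ for all $n$ (take $\phi(n,X,Y) := \theta(n,Y)$ in axiom (8) and discard the starting coordinate). So the whole proposition reduces to finding a formula $\theta$, forcing the $x_n$ into the correct basic sets, whose existential premise is \emph{provable} in CM.

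The hard part --- and the real content of the statement --- is exactly this premise, because inhabitedness of a basic open set is not decidable here. The assertion that $\Bc_{(Y_n)}$ is inhabited is the second order existential $(\exists X)(\langle X,Y_n\rangle \in \Bc)$; although the atomic relation $\langle X,Y_n\rangle \in \Bc$ obeys excluded middle for each fixed $X$ (it is a third order membership statement), the quantifier ranges over the non-surveyable collection $\Xb$ and so the existential need not satisfy $\phi \vee \neg\phi$. Consequently I cannot split the indices into an ``inhabited'' and an ``empty'' class, and I cannot state a provable instance of $(\forall n)(\exists X)\theta(n,X)$ that returns a genuine point precisely when one is available: choosing $\theta(n,X)$ to be ``$X \in \Bc_{(Y_n)}$'' makes the premise false at the empty basic sets, while weakening it to tolerate a default value lets the selection return the default everywhere and destroys density. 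Note also that the device of Proposition 3.37, where numerical omniscience made the analogous inhabitedness condition decidable by searching a countable dense set \emph{already in hand}, is unavailable precisely because no countable set of points is yet given --- producing one is the goal. I therefore expect the crux to be arranging the selection so that existence is uniformly provable, i.e.\ restricting the choice to those $n$ for which a witness can actually be exhibited from the given presentation of the base $\Bc$, and then verifying that this restricted family of witnesses still meets every inhabited basic open set. Once such a witnessed, uniformly provable existence statement is isolated, dependent choice supplies the sequence and the density check is immediate; pinning down the constructively correct reading of ``inhabited'' is the heart of the matter.
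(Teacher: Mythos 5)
Your submission is an analysis of the problem, not a proof of it: it ends at precisely the point where the mathematical content would have to begin. The first part is fine --- your derivation of countable choice from axiom (8) (taking $\phi(n,X,Y) := \theta(n,Y)$ and discarding the starting coordinate) is correct, and your reduction of the proposition to exhibiting a formula $\theta$ whose premise $(\forall n)(\exists X)\theta(n,X)$ is provable in CM is the right frame. You also correctly observe that the obvious candidates fail: taking $\theta(n,X)$ to be ``$X \in \Bc_{(Y_n)}$'' gives an unprovable premise, since inhabitedness of a basic open set is a second order existential to which neither the excluded-middle schema of CM nor numerical omniscience applies, while a default-tolerant $\theta$ lets the selection degenerate and destroys density. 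But you then stop, saying you ``expect the crux'' to be a suitably restricted selection and that the constructive reading of ``inhabited'' is ``the heart of the matter.'' Naming the heart of the matter is not supplying it: no formula $\theta$ is produced, no restricted family of witnesses is defined, and no density verification is carried out. As a proof, the construction of the countable dense set is simply never completed --- that is a genuine gap.

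It is worth saying that the paper itself states this proposition with no proof at all, evidently regarding the classical argument (select a point from each inhabited basic open set) as carrying over unchanged. Your diagnosis shows this is not automatic in CM: the dichotomy ``$\Bc_{(Y_n)}$ is inhabited or empty'' is exactly the kind of second order disjunction CM does not license, and the classically trivial statement $(\exists Y)\bigl[(\exists W)(W \in \Bc_{(Y_n)}) \to Y \in \Bc_{(Y_n)}\bigr]$, which is what dependent choice would need as input to handle possibly-empty basic sets, is a drinker-paradox instance that is not intuitionistically valid. Indeed, in the paper's own picture of second order objects appearing in stages, a countable set is ``frozen'' once its enumerating function is fixed, whereas a basic open set can acquire its first point later --- which is why no pre-chosen countable set can provably meet every basic set that is merely hypothetically inhabited. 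So your obstruction is real, and a correct proof would have to either build witnessing points (or inhabitedness data) into the notion of a countable base, or interpret density so that a point of $D$ is demanded only where a point of $\Bc_{(Y_n)}$ is actually given and show the construction can exploit that hypothesis. Since your write-up proposes this repair only as a direction and does not carry it out, it cannot be accepted as a proof of the statement.
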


\begin{prop}
In any topological space, any closed set is sequentially closed. In a
second countable space, any separable sequentially closed set is closed.
\end{prop}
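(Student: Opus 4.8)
I would prove the two assertions separately; the first is essentially classical, and the second is where the constructive subtleties lie. For the first, let $\Cb$ be closed, so $\Ub = \Xb - \Cb$ is open, and let $(x_n)$ be a sequence in $\Cb$ converging to some $x \in \Xb$. Since $\Cb$ is a subset of $\Xb$, hence a third order object with decidable membership, the formula $x \in \Cb$ satisfies excluded middle, and I may argue classically: if $x \notin \Cb$ then $x \in \Ub$, and as $\Ub$ is an open neighborhood of $x$ the sequence is eventually in $\Ub$, hence eventually outside $\Cb$ --- contradicting $(x_n) \subseteq \Cb$. Thus $x \in \Cb$, and $\Cb$ is sequentially closed. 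No countability hypotheses enter here.

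For the second assertion let $\Xb$ be second countable and let $\Cb$ be separable and sequentially closed; fix a countable dense subset $C \subseteq \Cb$ and a countable enumeration of the base $\Bb$. The central step is to identify $\Cb$ with the sequential closure of $C$, which is well-defined by the remark following Definition 3.51 because the space is second countable and $C$ is countable. One inclusion is immediate: any convergent sequence in $C$ is a convergent sequence in $\Cb$, so its limit lies in $\Cb$ by sequential closedness. For the reverse inclusion I would use the characterization recorded after Definition 3.51, namely that $x$ lies in the sequential closure of $C$ exactly when every basic open set containing $x$ meets $C$. If $x \in \Cb$ and $B$ is a basic open set with $x \in B$, then $B \cap \Cb \neq \emptyset$, so density of $C$ in $\Cb$ (in the relative topology of Definition 3.43) forces $B \cap C \neq \emptyset$; hence $x$ is in the sequential closure. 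Therefore $\Cb$ equals the sequential closure of $C$.

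It remains to show that $\Ub = \Xb - \Cb$ is open, and this is the heart of the argument. For fixed $x$, each statement $B \cap C \neq \emptyset$ is decided by numerical omniscience since $C$ is countable, and a second application of numerical omniscience over the countable base yields the dichotomy $x \in \Cb \vee (\exists B \in \Bb)(x \in B \wedge B \cap C = \emptyset)$. I would then check that $\Ub$ is exactly the union of those basic open sets $B$ with $B \cap C = \emptyset$: if $y$ belongs to such a $B$ then $B$ is a basic neighborhood of $y$ missing $C$, so $y$ is not in the sequential closure $\Cb$; and conversely every $x \in \Ub$ produces such a $B$ by the dichotomy above. Filtering the enumeration of $\Bb$ by the decidable condition $B \cap C = \emptyset$ and padding with the empty set (which belongs to the base) gives an explicit sequence of basic open sets whose union is $\Ub$, so $\Ub$ is open in the sense of the base construction of Proposition 3.44 and $\Cb$ is closed. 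The only real obstacle is the bookkeeping that shows every membership and intersection condition used above is decidable, so that numerical omniscience and third order comprehension may legitimately be applied; granting that, the proof runs along classical lines.
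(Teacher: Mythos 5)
Your proof is correct and follows essentially the same route as the paper's: the first statement is handled by the trivial classical argument (legitimate since $x \in \Cb$ is atomic and satisfies excluded middle), and the second by realizing the complement of $\Cb$ as the union of the basic open sets disjoint from $C$, with numerical omniscience over the countable set $C$ and the countable base supplying the needed decidability. The extra detail you provide (identifying $\Cb$ with the sequential closure of $C$, and the padding of the filtered enumeration by $\emptyset$) is exactly the "straightforward checking" that the paper leaves implicit.
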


\begin{proof}
The first statement is trivial. For the second, let $\Cb$ be a separable
sequentially closed set with countable dense subset $C$; we construct the
complementary open set as the union of all basic open sets that do not
intersect $C$. Since $C$ is countable the condition $\Ub \cap C =
\emptyset$ satisfies excluded middle, so this union exists. Checking that
it is the complement of $\Cb$ is straightforward.
\end{proof}

\begin{coro}
In a second countable space the sequential closure of any countable set
is closed.
\end{coro}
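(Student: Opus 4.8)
The plan is to reduce the statement to Proposition 3.53, whose second assertion is that in a second countable space every separable sequentially closed set is closed. Fix a countable set $C$ in a second countable space $\Xb$ and let $\Cb$ be its sequential closure. By the remark following Definition 3.51 this $\Cb$ already exists (second countability and countability of $C$ are exactly what secure the relevant instance of excluded middle), and membership in it is governed by the clean criterion that $x \in \Cb$ if and only if every basic open set containing $x$ meets $C$. With this criterion in hand it remains only to check that $\Cb$ is separable and sequentially closed, after which Proposition 3.53 finishes the job.

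Separability is immediate and I would dispatch it first. The constant sequences show $C \subseteq \Cb$, and the membership criterion says precisely that every basic open neighbourhood of a point of $\Cb$ meets $C$; thus $C$ is a countable dense subset of $\Cb$.

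The one step carrying any content is sequential closedness, and here I would argue straight from the criterion rather than from sequences. Let $(x_k)$ be a sequence in $\Cb$ converging to $x$, and let $\Ub$ be an arbitrary basic open set containing $x$. Since $\Ub$ is open and $x_k \to x$, some $x_k$ lies in $\Ub$; but then $\Ub$ is a basic open neighbourhood of the point $x_k \in \Cb$, so $\Ub$ meets $C$. As $\Ub$ was arbitrary, every basic open set containing $x$ meets $C$, i.e.\ $x \in \Cb$. Hence $\Cb$ is sequentially closed, and the corollary follows.

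I expect the genuine subtlety to lie not in the length of this last argument but in the fact that it succeeds at all: in classical general topology the sequential closure of a set need not be sequentially closed. What makes it work here is that $\Cb$ has been defined through the ``every basic open neighbourhood meets $C$'' criterion (valid for second countable spaces) rather than literally as a set of sequential limits, so it is automatically saturated in the way the argument exploits. As a sanity check one could avoid Proposition 3.53 altogether and show directly that $\Xb - \Cb$ is the union of the sequence of all basic open sets disjoint from $C$; that subfamily exists because ``this basic open set misses $C$'' satisfies excluded middle by numerical omniscience, and its union is open by axiom (ii) of Definition 3.42.
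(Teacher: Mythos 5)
Your proposal is correct and matches the paper's intent: the paper states this as an unproved corollary of Proposition 3.53, and your argument is exactly the reduction the paper has in mind --- $C$ is a countable dense subset of the sequential closure $\Cb$, and $\Cb$ is sequentially closed via the ``every basic open neighbourhood of $x$ meets $C$'' membership criterion from the remark after Definition 3.51, so Proposition 3.53 applies. Your direct alternative (taking the complement to be the union of the countably many basic open sets disjoint from $C$) is also sound, and is in fact just the proof of Proposition 3.53 inlined.
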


\begin{defi}
A topological space is {\it compact} if the intersection of any sequence of
closed sets, any finitely many of which have nonempty intersection, is
nonempty. It is {\it sequentially compact} if every sequence has a
convergent subsequence.
\end{defi}

\begin{prop}
Any sequentially compact space is compact.
Any compact second countable space is sequentially compact.
\end{prop}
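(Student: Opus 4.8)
The plan is to prove the two assertions separately. The first holds in any topological space and follows the pattern of the implication (iii)~$\Rightarrow$~(i) in Theorem 3.36; the second mirrors the classical route from compactness to a cluster point and then, using the countable base, to a convergent subsequence. For the first assertion, suppose $\Xb$ is sequentially compact and let $(\Cb_n)$ be a sequence of closed sets with the finite intersection property. For each $n$ the set $\bigcap_{k=1}^{n}\Cb_k$ is nonempty by hypothesis, so dependent choice (axiom~(8)) produces a sequence $(x_n)$ with $x_n \in \bigcap_{k=1}^{n}\Cb_k$. By sequential compactness some subsequence $(x_{n_j})$ converges to a point $x$. Fixing $m$, every term with $n_j \geq m$ lies in $\Cb_m$, so discarding finitely many initial terms exhibits $x$ as the limit of a convergent sequence in $\Cb_m$; since $\Cb_m$ is closed it is sequentially closed (Proposition 3.53), hence $x \in \Cb_m$. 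As $m$ is arbitrary, $x \in \bigcap_n \Cb_n$, which is therefore nonempty.

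For the second assertion, suppose $\Xb$ is compact and second countable and let $(x_n)$ be a sequence in $\Xb$. First I would produce a cluster point. For each $n$ the tail $T_n$, the image of the countable set $\{k \in N : k \geq n\}$ under the sequence, exists and is countable by Proposition 3.9, and by Corollary 3.54 its sequential closure $\Cb_n$ is closed. The sets $\Cb_n$ are nested and each contains the nonempty $T_n$, so $(\Cb_n)$ has the finite intersection property; compactness (Definition 3.55) then yields a point $x \in \bigcap_n \Cb_n$. Since $x$ belongs to the sequential closure of every tail $T_n$, the characterization recorded after Definition 3.51 shows that every basic open set containing $x$ meets each $T_n$, i.e.\ contains $x_k$ for arbitrarily large $k$; as every open neighborhood of $x$ contains a basic one, $x$ is a genuine cluster point of the sequence.

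It remains to extract a subsequence converging to $x$, which is where second countability and the CM-specific bookkeeping come in. I would enumerate the members of the countable base that contain $x$ as $V_1, V_2, \ldots$; this enumeration is legitimate because each membership $x \in \Bc_{(Y)}$ is a third order relation and so obeys excluded middle. Setting $W_j = V_1 \cap \cdots \cap V_j$ gives a decreasing sequence of open neighborhoods of $x$ that is a base at $x$, so it suffices to choose indices $n_1 < n_2 < \cdots$ with $x_{n_j} \in W_j$. For fixed $j$ the condition $x_k \in W_j$ is a finite conjunction of third order membership statements and hence decidable, numerical omniscience (Theorem 2.2) makes the search for the least admissible $k$ exceeding $n_{j-1}$ obey excluded middle, and the cluster point property guarantees that such a $k$ exists; dependent choice then assembles the selections into the required subsequence, which converges to $x$ by the neighborhood-base property.

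The main obstacle is precisely this last construction: one must check that every object invoked --- the tails $T_n$, their sequential closures $\Cb_n$, the neighborhoods $W_j$, and the index sequence $(n_j)$ --- is actually formed within CM, rather than taken for granted as in the classical proof. Each point dissolves into the paper's recurring technique: reduce the governing condition to decidable third order membership, apply numerical omniscience so that the attendant countable searches satisfy excluded middle, and use dependent choice to carry out the countably many selections uniformly. Second countability is indispensable throughout, both in guaranteeing via Corollary 3.54 that the sequential closures $\Cb_n$ are closed and in providing the countable neighborhood base at $x$ needed for the final extraction.
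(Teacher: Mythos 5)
Your proof is correct and follows essentially the same route as the paper's: for the first assertion, choosing points in the finite intersections and passing to a convergent subsequence whose limit lies in every $\Cb_n$; for the second, forming the sequential closures of the tails, invoking Corollary 3.54 and compactness, and then using the countable base to extract a convergent subsequence. The only difference is that you spell out the CM bookkeeping (dependent choice, numerical omniscience, decidability of basic membership) that the paper compresses into ``choose'' and ``easily seen (using second countability)''.
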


\begin{proof}
The first assertion is easy: given a sequence of closed sets $\Cb_n$
with the finite intersection property, and assuming sequential compactness,
for each $n$ choose $x_n \in \Cb_1 \cap \cdots \cap \Cb_n$ and then let $x$
be the limit of some convergent subsequence of $(x_n)$. It is easy to
see that $x$ must belong to the intersection $\bigcap \Cb_n$.

For the second assertion suppose $\Xb$ is compact and second countable
and let $(x_n)$ be a sequence in $\Xb$. Then for each $n$ the sequential
closure $\Cb_n$ of the set $\{x_k: k \geq n\}$ is closed by Corollary 3.54.
By compactness the intersection of these sets is nonempty, and any
point in this intersection is easily seen (using second countability)
to be the limit of some subsequence of $(x_n)$.
\end{proof}

\begin{theo}
Let $(\Xb_n)$ be a sequence of compact second countable spaces. Then
$\prod \Xb_n$ is compact and second countable.
\end{theo}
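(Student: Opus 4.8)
The plan is to prove the two assertions—second countability and compactness—separately, establishing second countability \emph{first} so that convergence of sequences in $\prod \Xb_n$ is well-behaved (in particular, so that the statement ``$y$ is a limit of a given sequence'' satisfies excluded middle, as remarked after Definition 3.52) and can be used freely in the compactness argument.

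\emph{Second countability.} Since each $\Xb_n$ is second countable, for each $n$ there is a countable base; using dependent choice (axiom (8)) I would fix a sequence $(\Bb^n)$ of such countable bases together with enumerations $N \to \Bb^n$. The candidate countable base $\Bc_{\mathrm{prod}}$ for the product consists of the finite ``boxes'': pairs $\langle m, Y\rangle$ with $Y \in \Bb^1 \times \cdots \times \Bb^m$, whose associated open set constrains the $n$th coordinate (for $n \le m$) to lie in the basic open set indexed by $Y_{(n)}$ and leaves the rest free—exactly as in Definition 3.48, but with $\Bb^n$ in place of the full index set $\Tb^n$. The set of all such finite boxes is countable (a coded countable union of finite products of countable sets) and exists by arithmetical comprehension. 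I would then verify the hypotheses of Proposition 3.44 for $\Bc_{\mathrm{prod}}$—that $\emptyset$ and $\prod \Xb_n$ occur, and that the intersection of two boxes is a union of boxes, which reduces coordinatewise to the corresponding property of each $\Bb^n$—and check that the topology it generates coincides with the product topology of Definition 3.48: every box is product-open, while conversely each defining open set $\Tc^n_{(Y_{(n)})}$ of a product-basic set is a union of members of $\Bb^n$, so each product-basic set is a union of boxes. This exhibits a countable base and gives second countability.

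\emph{Compactness.} By Proposition 3.57 it suffices to show $\prod \Xb_n$ is sequentially compact, since any sequentially compact space is compact. Each factor $\Xb_n$ is compact and second countable, hence sequentially compact by the second part of Proposition 3.57. The key preliminary fact I would record is that, in the product (now known to be second countable), a sequence $(y^k)$ converges to $y$ if and only if $y^k_{(n)} \to y_{(n)}$ in $\Xb_n$ for every $n$: the forward direction holds because each projection $\pi_n$ is continuous (apply Proposition 3.50 to the identity map) and continuous maps preserve convergent sequences, while the reverse direction holds because each basic box constrains only finitely many coordinates, so coordinatewise convergence forces $y^k$ into any prescribed box for all large $k$. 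Given a sequence $(x^k)$ in $\prod \Xb_n$, I would run the classical diagonal extraction, organized so that dependent choice applies: the state at stage $n$ is an infinite subset $I_n \subseteq N$ together with a point $y_{(n)} \in \Xb_n$, coded as a single second order object; starting from $I_0 = N$, sequential compactness of $\Xb_{n+1}$ applied to $(x^k_{(n+1)})_{k \in I_n}$ yields an infinite $I_{n+1} \subseteq I_n$ along which the $(n+1)$st coordinate converges, with recorded limit $y_{(n+1)}$. This is a single instance of axiom (8). The diagonal index $d(k) =$ the $k$th element of $I_k$ is strictly increasing and, for $k \ge n$, lies in $I_n$, so $(x^{d(k)})$ converges to $y_{(n)}$ in each coordinate $n$; assembling $y = (y_{(n)})$ gives a point of $\prod \Xb_n$, and by the convergence lemma $(x^{d(k)}) \to y$ in the product. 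Thus $\prod \Xb_n$ is sequentially compact, and hence compact.

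The main obstacle I anticipate is the compactness half, specifically the interplay of dependent choice with the convergence lemma: one must phrase the nested-subsequence construction (and the accompanying choice of coordinate limits) as a legitimate instance of axiom (8), encoding the evolving state as a second order object and checking the required uniform existence premise, and one must secure the coordinatewise characterization of convergence in the product—whose legitimacy in CM relies on having already proved second countability, so that the relevant convergence assertions satisfy excluded middle by numerical omniscience. The second-countability half, by contrast, is essentially bookkeeping with the base-generation machinery of Proposition 3.44.
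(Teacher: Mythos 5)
Your decomposition is exactly the paper's: its (two-sentence) proof likewise obtains second countability ``from the definition of the product topology'' --- i.e., by replacing each factor's topology with a countable base and taking finite boxes --- and obtains compactness from the equivalence with sequential compactness by successively extracting subsequences that converge on the first $n$ coordinates and diagonalizing. Your elaboration of the diagonal argument is sound: the evolving state (an infinite index set $I_n \subseteq N$ together with a point $y_{(n)} \in \Xb_n$) consists of second order objects, so coding it as one second order object and applying axiom (8) is legitimate, and the coordinatewise convergence lemma is proved directly in both directions. (One small correction of emphasis: that lemma and the compactness half need no appeal to excluded middle or to second countability of the \emph{product} --- only the factors' second countability is used, via the sequential compactness of each $\Xb_n$ --- so the ordering of the two halves is actually immaterial.)

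The one step that fails as written is the use of dependent choice to ``fix a sequence $(\Bb^n)$ of countable bases.'' Axiom (8) is dependent choice for \emph{second order} objects, whereas a base $\Bc^n \subseteq \Xb_n \times \Bb^n$ is a third order object (each basic open set is a collection of second order objects), so it cannot occur as a value of the choice variable; CM has no third order choice principle (that is what CM${}^+$ supplies). The repair is that no choice should be needed: in this framework a ``sequence of second countable spaces'' is itself a single coded third order object, and second countability is a property of the given presentation --- the index set $\Bb$ of the base that comes with the space is countable --- so the sequence of bases, together with enumerations, is part of the hypothesis, and your base of finite boxes then exists by comprehension alone. With that reading your argument goes through and matches the paper's intended proof. (Also, your references are off by one: the product topology is Definition 3.47, second countability is Definition 3.51, and the sequential-compactness equivalence is Proposition 3.56; 3.57 is the theorem being proved.)
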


\begin{proof}
The fact that the product of a sequence of second countable spaces is
second countable follows easily from the definition of the product
topology. For compactness, use the equivalence of compactness and
sequential compactness and show that any sequence has a convergent
subsequence by successively extracting subsequences that converge on
the first $n$ coordinates and diagonalizing.
\end{proof}

\subsection{Measure theory}

Measure theory presents a greater challenge to formalization in CM
because its usual development involves uncountable pathology in
the form of, for example, the Borel hierarchy on the real line. However,
the fact that every measurable subset of $\Rb$ is a $G_\delta$ set minus
a null set strongly suggests that this kind of
pathology is not essential to the theory. Every measurable set is nested
between an $F_\sigma$ set and a $G_\delta$ set with null difference, which
motivates the following definition.

\begin{defi}
A {\it function on a family of subsets $\MM \subseteq \Xb \times \Mb$
of a set $\Xb$} is a function $\fb$ with domain $\Mb$ such that
$\MM_{(Y)} = \MM_{(Z)}$ implies $\fb(Y) \equiv \fb(Z)$. This allows us to
define {\it the value of $\fb$ on $\MM_{(Y)}$} to be $\fb(Y)$. We may
write $\fb(\MM_{(Y)})$ for $\fb(Y)$.

A {\it family of pairs of subsets} of a set $\Xb$ is a family of subsets
$\MM$ of $\Xb \times \{0,1\}$. For each $Y \in \Mb$
and $i = 0,1$ we write $\MM^i_{(Y)} = \{x \in \Xb: \langle x,i\rangle
\in \MM_{(Y)}\}$, and we also write $\MM_{(Y)} = \langle \MM^0_{(Y)},
\MM^1_{(Y)}\rangle$. $\MM$ is a {\it family of nested pairs of subsets}
if every pair $\langle \Yb_0, \Yb_1\rangle$ in $\MM$ satisfies
$\Yb_0 \subseteq \Yb_1$.

A {\it compatible function on a family of nested pairs of subsets of $\Xb$}
is a function $\mu$ on a family $\MM$ of nested pairs of subsets of $\Xb$
with the following property:
\begin{quote}
if $\langle\Yb_0, \Yb_1\rangle$ and
$\langle \Zb_0, \Zb_1\rangle$ belong to $\MM$ and
$\Yb_0 \cup \Zb_0 \subseteq \Yb_1 \cap \Zb_1$ then
$\mu(\langle\Yb_0, \Yb_1\rangle) = \mu(\langle \Zb_0, \Zb_1\rangle)$.
\end{quote}
Since $\Yb_0 \subseteq \Yb \subseteq \Yb_1$ and
$\Zb_0 \subseteq \Yb \subseteq \Zb_1$ imply $\Yb_0 \cup \Zb_0
\subseteq \Yb_1 \cap \Zb_1$, the compatibility condition allows us to
define {\it the value of $\mu$ on $\Yb$} to be $\mu(\langle \Yb_0,
\Yb_1\rangle)$ for any subset $\Yb \subseteq \Xb$ such that
$\Yb_0 \subseteq \Yb \subseteq \Yb_1$. We may write $\mu(\Yb)$ for
$\mu(\langle \Yb_0,\Yb_1\rangle)$.
We say that such a set $\Yb$ is {\it measurable} or {\it $\mu$-measurable}.

A {\it measure} on a set $\Xb$ is a compatible function
$\mu: \Mb \to [0,\infty]$ on a family of nested pairs of subsets
of $\Xb$ such that

(i) $\emptyset$ is measurable and $\mu(\emptyset) = 0$;

(ii) if $\Yb$ is measurable then so is $\Xb - \Yb$;

(iii) if each set in a sequence $(\Yb_n)$ is measurable then
so is their union, and if the sets are disjoint then
$\mu(\bigcup \Yb_n) = \sum \mu(\Yb_n)$.
\end{defi}

The problem of constructing measures also requires a new technique.
We cannot use
Carath\'{e}odory's method because it defines measurability using
what would be in our context a third order quantification. However,
it is not hard to come up with a more direct construction that also
works. We consider only the case of finite measures, but passing to
$\sigma$-finite measures would be a simple matter of partitioning into
finite measure subspaces.

\begin{theo}
Let $\Xb$ be a set, let $\widetilde{\MM} \subseteq \Xb \times \widetilde{\Mb}$
be a nonempty family of subsets of $\Xb$ which is stable under finite unions
and complements, and let $\tilde{\mu}: \widetilde{\Mb} \to [0,a]$ be a
function on the family $\widetilde{\MM}$. Suppose that
$\tilde{\mu}(\emptyset) = 0$ and $\tilde{\mu}(\bigcup \Xb_n) =
\sum \tilde{\mu}(\Xb_n)$ whenever $(\Xb_n)$ is a disjoint sequence of sets
that belong to the family whose union also belongs to the
family. Then there is a measure $\mu$ on $\Xb$ such that every set that belongs
to the family $\widetilde{\MM}$ is measurable and $\mu$ agrees with $\tilde{\mu}$
on every such set.
\end{theo}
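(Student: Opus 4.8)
The plan is to imitate the classical extension of a premeasure on an algebra, but to replace Carath\'eodory's measurability test---which quantifies over all subsets of $\Xb$ and hence is a third order condition with no hope of satisfying excluded middle---by a direct construction of a family of nested pairs in which the only conditions that must be checked are equalities and inequalities between explicitly defined real numbers. Since the algebra $\widetilde{\MM}$ is stable under finite unions and complements it contains $\Xb=\widetilde{\MM}_{(Y)}\cup(\Xb-\widetilde{\MM}_{(Y)})$ and $\emptyset$; write $M=\tilde\mu(\Xb)$. The basic building blocks are the \emph{union--sets}: for any sequence $W=(W_n)$ with each $W_n\in\widetilde{\Mb}$, let $\widehat{W}_N\in\widetilde{\Mb}$ be an index for $\widetilde{\MM}_{(W_0)}\cup\cdots\cup\widetilde{\MM}_{(W_N)}$ (which exists by stability under finite unions), put $\Ub_W=\bigcup_n\widetilde{\MM}_{(W_n)}$, and set
$$\nu(W)=\sup_N\tilde\mu(\widehat{W}_N).$$
The sequence $\tilde\mu(\widehat{W}_N)$ is nondecreasing and bounded by $M$, so this supremum exists in $\Rb$ by sequential completeness. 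First I would record the standard premeasure lemmas, all provable by ordinary reasoning in CM: $\nu$ is monotone and countably subadditive, it is finitely additive on disjoint union--sets, and---using finiteness of $M$ together with the countable additivity hypothesis---$\nu(W)$ depends only on the set $\Ub_W$, so that it descends to a function $\nu(\Ub_W)$ on union--sets.

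The point of working with $\nu$ is that it converts set--theoretic assertions into statements about reals. I would next check that every condition I need is decidable. Membership $x\in\Ub_W$ is $(\exists n)(x\in\widetilde{\MM}_{(W_n)})$, a countable disjunction of decidable (third order) atomic formulas, so it satisfies excluded middle by numerical omniscience and $\Ub_W$ exists by third order comprehension; likewise ``$W_n\in\widetilde{\Mb}$ for all $n$'' is decidable. Most importantly, $\Ub_W\cap\Ub_Z$ is again a union--set (reindex $\{\widetilde{\MM}_{(W_n)}\cap\widetilde{\MM}_{(Z_m)}\}$), so it has a well--defined real measure and the \emph{null--overlap condition} $\nu(\Ub_W\cap\Ub_Z)=0$ is an equality of reals, hence decidable. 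I would then define the index set $\Mb$ to consist of all (codes for) pairs $\langle W,Z\rangle$ satisfying the null--overlap condition; $\Mb$ exists by third order comprehension. To each such index I attach the pair of subsets
$$\Yb_0=\Ub_W\setminus\Ub_Z,\qquad \Yb_1=\Ub_W,$$
encoded as a subset of $\Xb\times\{0,1\}$, and I set $\mu(\langle W,Z\rangle)=\nu(\Ub_W)$. The crucial feature is that the nesting $\Yb_0\subseteq\Yb_1$ holds \emph{by construction} and never has to be tested---this is exactly how I sidestep the fact that set containment is not a decidable relation. All the membership conditions defining $\MM$ are decidable, so $\MM$ exists, and $\mu$ is a genuine function on the family because $\Ub_W=\Ub_{W'}$ forces $\nu(\Ub_W)=\nu(\Ub_{W'})$.

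It then remains to verify compatibility and the measure axioms. Compatibility is the heart of the matter but is now easy: if the pairs from indices $\langle W,Z\rangle$ and $\langle W',Z'\rangle$ satisfy $\Yb_0\cup\Yb_0'\subseteq\Yb_1\cap\Yb_1'$, then in particular $\Yb_0\subseteq\Ub_{W'}$, so $\Ub_W=(\Ub_W\cap\Ub_Z)\cup\Yb_0\subseteq(\Ub_W\cap\Ub_Z)\cup\Ub_{W'}$, and monotonicity together with subadditivity and the null--overlap condition give $\nu(\Ub_W)\le\nu(\Ub_W\cap\Ub_Z)+\nu(\Ub_{W'})=\nu(\Ub_{W'})$; by symmetry the two values agree. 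For the axioms, $\emptyset$ is measurable with $\mu(\emptyset)=0$ via the all--empty sequences, and an arbitrary algebra set $A=\widetilde{\MM}_{(Y)}$ is measurable with $\mu(A)=\tilde\mu(Y)$ by taking $W$ constant at $Y$ and $Z$ constant at an index for $\Xb-A$, so that $\Ub_W=A$, $\Ub_W\cap\Ub_Z=\emptyset$, and the pair is $\langle A,A\rangle$; this is also the asserted agreement of $\mu$ with $\tilde\mu$. Countable additivity (axiom (iii)) follows from countable additivity of $\nu$ by covering a disjoint measurable sequence with the diagonal union of the outer covers.

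The one genuinely delicate axiom is closure under complement (axiom (ii)): a single index approximates $\Yb$ from \emph{outside} by the union--set $\Ub_W$, whereas $\Xb-\Yb$ is naturally trapped between the intersection--set $\Ub_W^{\,c}$ and $\Ub_W^{\,c}\cup\Ub_Z$, and intersection--sets are not union--sets, so the same data do not directly furnish a pair for $\Xb-\Yb$. I expect this to be the main obstacle, and I would resolve it by symmetrizing the construction: enlarge $\Mb$ to also admit the dual pairs $\langle\Ub_Z^{\,c},\ \Ub_Z^{\,c}\cup\Ub_W\rangle$ with value $M-\nu(\Ub_Z)$, whose nesting is again automatic and whose gap $\Ub_W\cap\Ub_Z$ is again null. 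That such dual pairs suffice to sandwich $\Xb-\Yb$ with matching value---equivalently, that every intersection--set is outer--regular with respect to union--sets---is precisely continuity from above of the premeasure, which holds because the total mass $M$ is finite. With both families of pairs present the collection is manifestly closed under complement, and rechecking compatibility across the two types is the same monotonicity--plus--subadditivity estimate carried out relative to $M$. The tension throughout---and the reason the usual Carath\'eodory development cannot be transcribed verbatim---is that literal containment and emptiness of subsets of $\Xb$ are second order assertions to which excluded middle does not apply; the entire construction is arranged so that these are never decided and only the real--valued quantities $\nu(\cdot)$ are.
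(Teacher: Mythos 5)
Your construction is genuinely different from the paper's, and it has a fatal gap --- but not where you think it is. Your worry about axiom (ii) is actually resolved by your symmetrization: the complement of a set sandwiched by $\langle \Ub_W\setminus\Ub_Z,\ \Ub_W\rangle$ is sandwiched by the dual pair $\langle \Ub_W^{\,c},\ \Ub_W^{\,c}\cup\Ub_Z\rangle$ (same null overlap, roles of $W$ and $Z$ swapped), so no ``outer regularity'' fact is needed there at all. What breaks is axiom (iii), closure under countable unions, and the reason is that your pairs demand \emph{exact} (null-discrepancy) approximation one level above the algebra, which is too rigid. Run your construction on the length premeasure over the algebra of finite unions of intervals in $[0,1]$, so $M=1$. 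A union-set of premeasure $0$ is then just a countable set of points; hence your type A sets are exactly the sets of the form (union-set) minus (subset of a countable set), and your type B sets are their complements. A fat Cantor set $\Kb$ (closed, nowhere dense, positive measure) is type B but not type A, since every union-set containing $\Kb$ exceeds it by a nonempty open set, hence by uncountably many points --- this already refutes your claim that every intersection-set is outer-regular with respect to union-sets: continuity from above gives covers within $\epsilon$ for each $\epsilon$, never a single cover with null excess. Now take disjoint fat Cantor sets $\Kb_n$ whose union $\Sb=\bigcup\Kb_n$ is dense in $[0,1]$ with $\mu(\Sb)=3/4$. Each $\Kb_n$ is type B, hence measurable in your scheme; but $\Sb$ is not type A (up to countably many points a union-set covering the dense set $\Sb$ is a dense open set, whose excess over the meager set $\Sb$ is comeager, hence uncountable) and not type B (a complement-of-union-set squeezed between $\Sb$ minus a countable set and $\Sb$ is an intersection of finite unions of intervals each of which is dense, hence co-finite, so the intersection is co-countable and has measure $1>3/4$). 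So a countable union of your measurable sets is not measurable. Note also that your ``diagonal union of outer covers'' argument for (iii) only ever applied to type A sets; the type B sets you were forced to add have no tight outer covers, which is precisely why they had to be added, and iterating the patch just walks up the Borel hierarchy without ever closing off.

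The paper's construction is designed to dodge exactly this. Instead of exact sandwiching one level above the algebra, it uses $\epsilon$-approximation for every $\epsilon$, encoded metrically: define $\db(Y,Z)=\tilde{\mu}(\widetilde{\MM}_{(Y)}\Delta\widetilde{\MM}_{(Z)})$, a pseudometric on $\widetilde{\Mb}$; let $\Mb$ be the set of $\db$-Cauchy sequences of algebra sets; and sandwich measurable sets between $\liminf\widetilde{\MM}_{(Y_{(n)})}$ and $\limsup\widetilde{\MM}_{(Y_{(n)})}$, with value $\lim\tilde{\mu}(\widetilde{\MM}_{(Y_{(n)})})$. The sandwiching sets now live \emph{two} levels above the algebra (a union of intersections below, an intersection of unions above), which is what closure under the measure axioms requires: complements come for free because $\db$ is complement-invariant and $\liminf$/$\limsup$ swap under complementation, and countable unions follow by diagonalizing approximating sequences, using finiteness of the total mass. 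Your logical housekeeping --- arranging that everything defining $\Mb$, $\MM$, and $\mu$ is an arithmetical condition on reals so that comprehension and excluded middle apply --- is correct and is equally available for the Cauchy condition, so that part of your proposal transfers; it is the measure theory, not the logic, that forces the paper's formulation.
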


\begin{proof}
We merely indicate the construction of $\mu$. The verification that $\mu$
has the desired properties is an exercise in measure
theory and we omit it.

For any $Y$ and $Z$ in $\widetilde{\Mb}$ define $\db(Y,Z) =
\tilde{\mu}(\widetilde{\MM}_{(Y)}\Delta\widetilde{\MM}_{(Z)})$,
where $\Delta$ denotes symmetric difference. This is a pseudometric
on $\widetilde{\Mb}$. Then let $\Mb$ be the set of all $Y$ such that
$Y_{(n)}$ belongs to $\widetilde{\Mb}$ for all $n$ and
$\db(Y_{(m)},Y_{(n)}) \to 0$ as $m,n \to \infty$. We define
$\MM$ by the prescription $\MM^0_{(Y)} = \liminf \widetilde{\MM}_{(Y_{(n)})}$
and $\MM^1_{(Y)} = \limsup \widetilde{\MM}_{(Y_{(n)})}$, and we set
$\mu(\langle \MM^0_{(Y)}, \MM^1_{(Y)}\rangle) =
\lim \tilde{\mu}(\widetilde{\MM}_{(Y_{(n)})})$. This completes the
construction of $\mu$.
\end{proof}

\begin{defi}
The function $\tilde{\mu}$ in Theorem 3.59 is a {\it premeasure}, and
$\mu$ is the measure {\it generated} by $\tilde{\mu}$. A measure is
{\it separable} if it is generated by a premeasure defined on a
countable family of subsets.
\end{defi}

Theorem 3.59 allows us to construct Lebesgue measure in $[0,1]^n$ in
the usual way, or in $\Rb^n$ by partitioning into cubes.

Integration can be defined using similar methods. The definition is framed
in terms of a generating premeasure but it is not hard to see that the
integral does not actually depend on the choice of premeasure.

\begin{defi}
Let $\mu$ be a measure generated by a premeasure $\tilde{\mu}$. We
say that a function $\fb: \Xb \to \Rb$ is {\it simple} if it is a
finite linear combination of characteristic functions of sets that belong
to the family $\widetilde{\MM}$. We define the {\it integral} of a simple
function $\fb = \sum a_i \chi_{\Ab_i}$ to be
$$\int \fb \, d\tilde{\mu} = \sum a_i \tilde{\mu}(\Ab_i)$$
and we define the {\it $L^1$ distance} between two simple functions $\fb$
and $\gb$ to be
$$\db(\fb,\gb) = \int |\fb - \gb|\, d\tilde{\mu}.$$
A function $\fb: \Xb \to \Rb$ is {\it integrable} if there is a
sequence $(\fb_n)$ of simple functions, Cauchy for $L^1$ distance, such that
$$\liminf \fb_n \leq \fb \leq \limsup \fb_n.$$
We then define its {\it integral} $\int \fb\, d\mu$ to be
$$\int \fb\, d\mu = \lim \int \fb_n\, d\tilde{\mu}.$$
\end{defi}

\begin{theo}
The integral $\int \fb\, d\mu$ is well-defined.
\end{theo}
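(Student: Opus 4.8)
The statement asks for two things: that the limit $\lim \int \fb_n\, d\tilde{\mu}$ defining $\int \fb\, d\mu$ actually exists, and that its value does not depend on the approximating sequence $(\fb_n)$. The plan is to treat these in turn, the first being routine and the second carrying the real content.

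For existence I would first record the elementary properties of the integral on simple functions: it is linear, and it satisfies $|\int \gb\, d\tilde{\mu}| \le \int |\gb|\, d\tilde{\mu}$ (write $\gb$ in a form with disjoint sets that belong to the family $\widetilde{\MM}$ and use finite additivity of $\tilde{\mu}$, as furnished by the construction in Theorem 3.59). These give, for any $m,n$, the estimate $|\int \fb_n\, d\tilde{\mu} - \int \fb_m\, d\tilde{\mu}| = |\int (\fb_n - \fb_m)\, d\tilde{\mu}| \le \int |\fb_n - \fb_m|\, d\tilde{\mu} = \db(\fb_n, \fb_m)$. Since $(\fb_n)$ is Cauchy for the $L^1$ distance, the right-hand side tends to $0$, so $(\int \fb_n\, d\tilde{\mu})$ is a Cauchy sequence of reals; by Cauchy completeness of $\Rb$ (Theorem 3.12) it converges, and the limit exists.

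For independence, suppose $(\fb_n)$ and $(\gb_n)$ are two $L^1$-Cauchy sequences of simple functions, both satisfying $\liminf \fb_n \le \fb \le \limsup \fb_n$ and the analogous inequalities for $\gb$. By the same estimate it suffices to prove $\db(\fb_n, \gb_n) \to 0$, for then $|\int \fb_n\, d\tilde{\mu} - \int \gb_n\, d\tilde{\mu}| \le \db(\fb_n, \gb_n) \to 0$ and the two limits agree. I would extract, using dependent choice (axiom (8)), rapidly Cauchy subsequences with $\db(\fb_{n_{k+1}}, \fb_{n_k}) < 2^{-k}$ and likewise for $\gb$. For such a subsequence the majorant $\sum_k |\fb_{n_{k+1}} - \fb_{n_k}|$ has finite integral, so a Beppo--Levi argument yields a set of full measure on which the subsequence converges; on that set $\liminf$ and $\limsup$ of the subsequence coincide with its genuine limit, so the one-sided squeeze collapses to an equality and identifies the limit as $\fb$ almost everywhere. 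Running the same argument for $\gb$, both subsequences converge to $\fb$ almost everywhere and in $L^1$; hence their $L^1$ distance tends to $0$, and the Cauchy property of the full sequences propagates this back to $\db(\fb_n, \gb_n) \to 0$.

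The hard part will be carrying out the ``almost everywhere'' reasoning inside CM, where null sets and the relevant exceptional sets cannot be invoked classically but must be produced through the nested-pair apparatus of Definition 3.58 together with third order comprehension and numerical omniscience. In particular, each of the steps---forming $\liminf$ and $\limsup$ of a sequence of reals as genuine elements of $\Rb$, certifying that the set on which the majorant converges is measurable, and interchanging limit and integral---has to be rephrased so that the requisite instances of excluded middle hold, typically by checking that the condition in question is decidable in countably many steps. The cleanest device for pinning down the almost-everywhere limit is to work throughout with the rapidly convergent subsequence, for which $\liminf$ equals $\limsup$, so that the one-sided squeeze becomes an equality; the passage of limits through the integral would then be justified by the convergence properties of $\mu$ established in the proof of Theorem 3.59.
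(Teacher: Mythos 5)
The paper itself offers no proof of this theorem --- it is stated bare, evidently regarded as an omitted exercise like the verification in Theorem 3.59 --- so your proposal has to be judged on its own merits. Your first half (existence of the limit) is correct and is the easy part: $|\int \fb_n\, d\tilde{\mu} - \int \fb_m\, d\tilde{\mu}| \le \db(\fb_n,\fb_m)$ together with Cauchy completeness of $\Rb$ (Theorem 3.12) suffices. The gap is in the independence half, at the sentence claiming that ``the one-sided squeeze collapses to an equality and identifies the limit as $\fb$ almost everywhere.'' The squeeze hypothesis of Definition 3.61 is about the \emph{full} sequence, and passing to a subsequence only shrinks the bracket: $\liminf_n \fb_n \le \liminf_k \fb_{n_k} \le \limsup_k \fb_{n_k} \le \limsup_n \fb_n$. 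Beppo--Levi tells you the rapidly Cauchy subsequence converges a.e.\ to some $h$, i.e.\ the \emph{inner} two quantities coincide a.e.; the hypothesis places $\fb$ between the \emph{outer} two. Nothing forces $\fb = h$, so the identification is a non sequitur.

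Moreover this gap cannot be repaired, because with Definition 3.61 read literally the independence claim is false. On $[0,1]$ with the Lebesgue premeasure, let $(\fb_n)$ be the ``typewriter'' sequence $\fb_n = \chi_{A_n}$, where $A_1, A_2, \ldots$ enumerates the dyadic intervals $[j2^{-k}, (j+1)2^{-k}]$ block by block in increasing $k$. This sequence is $L^1$-Cauchy with $\int \fb_n\, d\tilde{\mu} \to 0$, yet pointwise $\liminf \fb_n = 0$ and $\limsup \fb_n = 1$ everywhere. Hence the constant function $\fb \equiv 1/2$ is squeezed by $(\fb_n)$, giving $\int \fb\, d\mu = 0$, while the constant sequence $\gb_n \equiv 1/2$ squeezes the same $\fb$ and gives $\int \fb\, d\mu = 1/2$; in particular your target claim $\db(\fb_n, \gb_n) \to 0$ fails. (The same example violates the compatibility clause of the measure constructed in Theorem 3.59, whose liminf/limsup device Definition 3.61 mirrors, so the defect is in the paper, not only in your argument.) What your Beppo--Levi argument genuinely proves is well-definedness under a strengthened definition --- for instance, requiring the squeeze to hold for every subsequence of $(\fb_n)$, or requiring $(\fb_n)$ to be rapidly Cauchy so that $\limsup \fb_n$ and $\liminf \fb_n$ agree off a null set --- and some such repair is needed before the theorem, as well as the underlying measure construction, can stand.
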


Finally, we indicate how to get a version of the Radon-Nikodym theorem.
The technique of sequential approximation is again crucial.

\begin{defi}
A {\it signed measure} on $\Xb$ is a compatible function $\nu: \Mb \to \Rb$
on a family of
nested pairs of subsets of $\Xb$ that satisfies the same axioms as a
measure. A signed measure $\nu$ is {\it absolutely continuous} with
respect to a measure $\mu$ if every $\mu$-measurable set is $\nu$-measurable
and $\mu(\Yb) = 0$ implies $\nu(\Yb) = 0$.
\end{defi}

\begin{theo}
Let $\mu$ be a separable finite measure on $\Xb$ and let $\nu$ be a (finite)
signed measure on $\Xb$ that is absolutely continuous with respect to $\mu$.
Then there is a $\mu$-integrable function $\fb: \Xb \to \Rb$ such that
$$\nu(\Ab) = \int \fb\cdot\chi_\Ab\, d\mu$$
for every $\mu$-measurable set $\Ab$.
\end{theo}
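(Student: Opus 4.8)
The plan is to obtain $\fb$ by the same sequential-approximation technique used throughout, but organized so that the relevant martingales live in an auxiliary $L^2$ space where convergence is forced by orthogonality rather than by any pointwise limiting process. Since $\mu$ is separable it is generated by a premeasure $\tilde\mu$ on a countable family $\widetilde{\MM}\subseteq\Xb\times\widetilde{\Mb}$ that is stable under finite unions and complements; enumerating $\widetilde{\Mb}$ as $(B_k)$, let $\Ac_n$ be the finite subalgebra generated by $B_1,\dots,B_n$ and let its atoms be the finitely many nonempty Boolean combinations $\Eb$. Each atom lies in the family, so $\mu(\Eb)$ and $\nu(\Eb)$ are defined (absolute continuity forces $\nu(\Eb)=0$ when $\mu(\Eb)=0$, and numerical omniscience makes the test $\mu(\Eb)>0$ decidable). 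The naive candidate is the simple function $\fb_n=\sum_{\Eb}\frac{\nu(\Eb)}{\mu(\Eb)}\chi_{\Eb}$, summed over atoms of positive measure, which satisfies $\int_\Ab\fb_n\,d\mu=\nu(\Ab)$ for every $\Ab\in\Ac_n$; thus the theorem would follow if $(\fb_n)$ were Cauchy for the $L^1(\mu)$ distance, since one could then set $\fb=\liminf\fb_n$ (which exists as in the proof of Theorem 3.14), obtain a $\mu$-integrable function by Definition 3.62, and pass to the limit in $\int_\Ab\fb_n\,d\mu=\nu(\Ab)$.

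I expect this $L^1$-Cauchy property to be the main obstacle. The sequence $(\fb_n)$ is a martingale for the filtration $(\Ac_n)$ with $\int|\fb_n|\,d\mu\le|\nu|(\Xb)$, and one can extract from $\nu\ll\mu$ the $\epsilon$-$\delta$ form of absolute continuity and hence uniform integrability; but the classical passage from uniform integrability to $L^1$-convergence of a martingale runs through pointwise almost-everywhere convergence (Doob's theorem together with Vitali's), and such pointwise convergence is exactly the kind of statement that is not available in CM. To circumvent this I would not prove the $L^1(\mu)$-Cauchy property of $(\fb_n)$ directly.

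Instead I would first construct the total variation measure $|\nu|$ as the increasing, hence convergent (Theorem 3.14), limit $|\nu|(\Ab)=\lim_n\sum_{\Eb\subseteq\Ab}|\nu(\Eb)|$ of the finite-algebra variations, and set $\rho=\mu+|\nu|$, a separable finite measure with $\mu,|\nu|\le\rho$. In the Hilbert space $L^2(\rho)$, built as the completion of the simple functions under the distance $\db(\fb,\gb)=(\int(\fb-\gb)^2\,d\rho)^{1/2}$, I consider the analogous martingales $k_n=\sum_{\Eb}\frac{\nu(\Eb)}{\rho(\Eb)}\chi_{\Eb}$ and $m_n=\sum_{\Eb}\frac{\mu(\Eb)}{\rho(\Eb)}\chi_{\Eb}$. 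Because $\rho$ dominates $|\nu|$ and $\mu$ we have $|k_n|\le1$ and $0\le m_n\le1$, and because they are martingales the Pythagorean identity $\int k_n^2\,d\rho=\int k_m^2\,d\rho+\int(k_n-k_m)^2\,d\rho$ holds for $m\le n$; since $\int k_n^2\,d\rho$ is nondecreasing and bounded by $\rho(\Xb)$ it is a Cauchy sequence of reals, so $(k_n)$ — and likewise $(m_n)$ — is $L^2(\rho)$-Cauchy and converges, with \emph{no} appeal to pointwise limits. Writing $k,m$ for the limiting integrable functions one checks $\nu(\Ab)=\int_\Ab k\,d\rho$ and $\mu(\Ab)=\int_\Ab m\,d\rho$ for every measurable $\Ab$; since $\mu(\{m=0\})=\int_{\{m=0\}}m\,d\rho=0$ and $\nu\ll\mu$, the function $\fb=k/m$ (set to $0$ on the $\mu$-null set $\{m=0\}$) is defined $\mu$-a.e., satisfies $\int|\fb|\,d\mu\le\rho(\Xb)<\infty$, and gives $\int_\Ab\fb\,d\mu=\int_\Ab k\,d\rho=\nu(\Ab)$, the middle equality holding because $\{m=0\}$ is both $\mu$- and $\nu$-null. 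The remaining points — the construction of $|\nu|$ as a genuine measure, the change-of-variables $d\mu=m\,d\rho$, the measurability of the quotient $k/m$, and the extension of the identity from the generating family to all measurable $\Ab$ — are routine measure theory of the kind omitted after Theorem 3.59, and I would omit them.
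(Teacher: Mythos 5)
Your proposal is correct in its essentials and, at the decisive step, takes a genuinely different route from the paper's proof. The paper uses exactly the skeleton you start from: a countable generating algebra, a cofinal sequence of finite partitions, and the simple functions given by the density of $\nu$ against $\mu$ on each partition (the paper's displayed formula $\fb_n=\sum\nu(\Ab_j)\chi_{\Ab_j}$ omits the division by $\mu(\Ab_j)$, evidently a slip that your version corrects); but at the crucial point the paper merely asserts, without argument, that $(\fb_n)$ is Cauchy and that this yields convergence on a set of full measure to an integrable $\fb$. You correctly identify that assertion as the real content, note that its classical verification (Doob's theorem plus uniform integrability) sits badly with CM, and replace it with a mechanism the paper never mentions: von Neumann's auxiliary measure $\rho=\mu+|\nu|$, under which the density martingales $(k_n)$ and $(m_n)$ are bounded by $1$, so that orthogonality of martingale increments makes $\int k_n^2\,d\rho$ a bounded nondecreasing sequence of reals; its convergence --- exactly the kind of convergence CM certifiably supplies, namely sequential completeness of $\Rb$ (Theorem 3.10 and Theorem 3.12; your citations of ``Theorem 3.14'' should point there) --- then forces $(k_n)$ to be $L^2(\rho)$-Cauchy, hence $L^1(\rho)$-Cauchy by Cauchy--Schwarz, with no pointwise limit theorem. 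This buys a proof whose one nontrivial convergence step is visibly available in CM, where the paper's sketch leaves that step unverified. What it costs is apparatus the paper's route avoids: you must construct $|\nu|$ and prove the partial variations are bounded (in CM this is a genuine argument --- boundedness satisfies excluded middle since it is arithmetical in the given data, and unboundedness is refuted by using dependent choice to extract disjoint algebra sets on which $|\nu|\geq 1$, contradicting countable additivity of $\nu$), and you must handle the quotient $\fb=k/m$, in particular the measurability of $\{m=0\}$ and the change of variables $d\mu=m\,d\rho$, none of which arises when $\fb$ is obtained directly as a limit of $\mu$-densities. These omissions are of the same ``routine measure theory'' character as those the paper itself waves off in Theorems 3.59 and 3.64, so I do not count them as gaps; but the finiteness of $|\nu|$ and the level-set measurability are the two places where your sketch leans hardest on that license and would need explicit treatment in a full write-up.
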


\begin{proof}
Again we merely indicate the construction. First, by separability there
is a generating premeasure defined on a countable algebra of sets.
We can then find a sequence of finite partitions of $\Xb$ by sets in
the algebra, such that every finite partition of $\Xb$ by sets in the
algebra is refined by some member of the sequence. If the $n$th
partition is $\Xb = \Ab_1 \cup \cdots \cup \Ab_k$ then we define
$$\fb_n = \sum_{j=1}^k \nu(\Ab_j)\chi_{\Ab_j}.$$
We then check that the sequence $(\fb_n)$ is Cauchy and that this
implies that it converges absolutely on a set of full measure to a
$\mu$-integrable function $\fb$. This completes the construction of $\fb$.
\end{proof}

\subsection{Banach spaces}

For simplicity we take the scalar field to be real; complex scalars do not
carry any additional logical demands.

Our definition of Banach spaces is identical to the classical one. What is
noteworthy here is that most of the classical examples require some sort of
coding. But little $L^p$ spaces do not:

\begin{defi}
For $1 \leq p < \infty$ let $l^p$ be the set of all sequences $(a_n)$
of real numbers such that $\sum |a_n|^p < \infty$, with norm
$\|(a_n)\|_p = \big(\sum |a_n|^p\big)^{1/p}$. Let $l^\infty$ be the
set of all bounded sequences of real numbers, with norm
$\|(a_n)\|_\infty = \sup |a_n|$.
\end{defi}

Here the condition $\sum |a_n|^p < \infty$ satisfies excluded middle
because it is equivalent to the condition ``there exists $K > 0$ such
that $\sum_{n=1}^m |a_n|^p \leq K$ for all $m$''. The norm itself exists
because the sequence of partial sums can be constructed using dependent
choice, and the supremum of that sequence can then be taken by Cauchy
completeness of $\Rb$.

Spaces of the form $C(\Xb)$ with $\Xb$ a compact metric space cannot be
directly represented in CM because each element is supposed to be a
third order object (a function from $\Xb$ into $\Rb$). However, this is
not a serious problem because any continuous function is determined by
its values on a dense subset.

\begin{defi}
Let $\Xb$ be a separable compact metric space with countable dense
subset $X$. We define $C(\Xb)$ to be the set of all uniformly continuous
functions from $X$ to $\Rb$.
\end{defi}

Literally, $C(\Xb)$ is the set of bounded sequences $(a_n) \in l^\infty$ such
that the map $x_n \mapsto a_n$ from $X$ to $\Rb$ is uniformly continuous, for
some enumeration $(x_n)$ of $X$. $C(\Xb)$ inherits its Banach space
structure from $l^\infty$.

At the Banach space level there is no particular advantage to working
with the functions themselves rather than their restrictions to a dense
subset. However, we certainly want to be able to work with individual
elements of $C(\Xb)$ as continuous functions on $\Xb$. This is easily
seen to be possible:

\begin{prop}
Let $\Xb$ be a separable compact metric space with countable dense subset
$X$. Then the restriction of any continuous function $\fb: \Xb \to \Rb$
to $X$ defines a sequence $(a_n)$ in $C(\Xb)$, and every sequence
$(a_n)$ in $C(\Xb)$ is the restriction of precisely one continuous
function.
\end{prop}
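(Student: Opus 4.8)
The plan is to prove the two assertions separately, in each case reducing the existence or universality claim to a condition that can be tested in countably many steps, so that numerical omniscience (Theorem 2.2) together with comprehension applies; this is the same pattern used throughout Section 3.

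For the first assertion, fix an enumeration $(x_n)$ of $X$ and set $a_n = \fb(x_n)$. The substantive point is that the map $x_n \mapsto a_n$ is uniformly continuous. For each rational $\epsilon > 0$ the assertion
$$(\exists \delta \in Q^+)(\forall m)(\forall n)[\db(x_m, x_n) < \delta \to |a_m - a_n| < \epsilon]$$
satisfies excluded middle: each bracketed implication compares reals with rationals and so is decidable, and two applications of numerical omniscience dispatch the universal quantifier over the countable index $\langle m,n\rangle$ and the existential over the countable set $Q^+$. Being entitled to argue by contradiction, I would suppose the displayed statement fails for some $\epsilon$; then dependent choice (axiom (8)) produces, for each $k$, indices $m_k, n_k$ with $\db(x_{m_k}, x_{n_k}) < 1/k$ and $|a_{m_k} - a_{n_k}| \geq \epsilon$. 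Passing to a subsequence with $x_{m_{k_j}} \to x$ by sequential compactness (Theorem 3.36(iii)) forces $x_{n_{k_j}} \to x$ as well, whence $a_{m_{k_j}} \to \fb(x)$ and $a_{n_{k_j}} \to \fb(x)$ by preservation of convergence (Theorem 3.39(iv)), contradicting $|a_{m_k} - a_{n_k}| \geq \epsilon$. Boundedness of $(a_n)$ then follows from uniform continuity and total boundedness of the compact space $\Xb$ (Theorem 3.36), so $(a_n) \in l^\infty$ and hence $(a_n) \in C(\Xb)$.

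For the second assertion, given $(a_n) \in C(\Xb)$ the intended extension sends $x$ to $\lim_j a_{n_j}$ for any $X$-sequence $x_{n_j} \to x$; uniform continuity of $(a_n)$ makes $(a_{n_j})$ Cauchy, so the limit exists by Cauchy completeness of $\Rb$ (Theorem 3.12) and is independent of the approximating sequence. To realize this as an actual third order object I would define $\fb \subseteq \Xb \times \Rb$ by the graph condition
$$(\forall k)(\exists n)[\db(x_n, x) < 1/k \wedge |a_n - y| < 1/k],$$
which says precisely that $y$ is such a limit. This condition is countably testable — the bracketed conjunction is decidable and two applications of numerical omniscience handle the existential over $n$ and then the universal over $k$ — so third order comprehension yields $\fb$. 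That $\fb$ is a function follows from existence of the limit (for each $x$, density and decidability of $\db(x_n,x) < 1/k$ let us select indices forming a suitable sequence) together with uniqueness of the limit; taking $n = m$ at every $k$ shows $\fb(x_m) = a_m$, so $\fb$ restricts to $(a_n)$.

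It remains to check that $\fb$ is continuous and is the unique continuous extension. Continuity I would obtain from Theorem 3.39 by verifying the $\epsilon$-$\delta$ condition 3.39(v): a standard $\epsilon/3$ estimate promotes the uniform continuity of $(a_n)$ on $X$ to all of $\Xb$ using density of $X$. Uniqueness is the familiar fact that two continuous functions agreeing on the dense set $X$ agree everywhere, which here follows from 3.39(iv), since every $x \in \Xb$ is a limit of a sequence in $X$. I expect the main obstacle to be not the classical content, which is routine, but the CM bookkeeping — specifically, finding the countably-testable formulation of the graph of $\fb$ so that its existence is licensed by comprehension, and confirming that the proof by contradiction used for uniform continuity is legitimate because the relevant assertion has excluded middle by numerical omniscience.
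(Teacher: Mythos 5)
Your proof is correct, and in fact the paper offers no argument for this proposition at all (it is introduced only with ``this is easily seen to be possible''); what you have written is precisely the routine verification that the paper's machinery is designed to support --- numerical omniscience plus comprehension to license the decidable graph condition defining the extension and the proof by contradiction for uniform continuity, with the compactness and continuity equivalences (Theorems 3.36 and 3.39) supplying the classical content. There are no gaps beyond the level of bookkeeping the paper itself routinely elides.
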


We use the Radon-Nikodym theorem (Theorem 3.64) to encode $L^p$
functions:

\begin{defi}
Let $\Xb$ be a separable finite measure space with generating premeasure
$\tilde{\mu}$. We define $L^1(\Xb)$ to be the set of all signed premeasures
$\tilde{\nu}$ on the family $\widetilde{\MM}$ which are absolutely continuous
with resepect to $\tilde{\mu}$, i.e., for all $\epsilon > 0$ there exists
$\delta > 0$ such that
$$\sum \tilde{\mu}(\Ab_i) \leq \epsilon\qquad \Rightarrow\qquad
\sum |\tilde{\nu}(\Ab_i)|\leq \delta$$
for any disjoint $\Ab_1, \ldots, \Ab_n$ in the algebra.
$L^\infty(\Xb)$ consists of the premeasures which satisfy the
stronger condition that there exists $K \geq 0$ such that
$$\sum |\tilde{\nu}(\Ab_i)| \leq K\cdot \sum \tilde{\mu}(\Ab_i)$$
for any disjoint $\Ab_1, \ldots, \Ab_n$ in the algebra.
We define $L^p(\Xb)$ for $1 < p < \infty$
to be those premeasures in $L^1(\Xb)$ the $p$th power of whose
Radon-Nikodym derivative is bounded.
\end{defi}

As for $C(\Xb)$, elements of $L^p(\Xb)$ are literally sequences of real
numbers which become premeasures when composed with a bijection
from $\widetilde{\Mb}$ to $N$. Again, the above definition could be
extended to the $\sigma$-finite case by partitioning into finite measure
subsets.

The following analog of Proposition 3.67 is an immediate consequence of
Theorem 3.64.

\begin{prop}
For $1 \leq p < \infty$ the elements of $L^p(\Xb)$ correspond to
functions on $\Xb$, modulo alteration on a set of measure zero, the
$p$th power of whose absolute value is integrable. The elements of
$L^\infty(\Xb)$ correspond to bounded integrable functions on $\Xb$,
modulo alteration on a set of measure zero.
\end{prop}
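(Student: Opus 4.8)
The plan is to read the correspondence off the Radon--Nikodym theorem (Theorem 3.64), treating the cases $p = 1$, $1 < p < \infty$, and $p = \infty$ uniformly and distinguishing them only by the side condition imposed on the resulting function. Since an element of $L^p(\Xb)$ is by Definition 3.68 a signed \emph{premeasure} $\tilde\nu$ on the countable family $\widetilde{\MM}$ rather than a signed measure, the first task is to pass from $\tilde\nu$ to a genuine signed measure. First I would show that a signed premeasure $\tilde\nu$ satisfying the $\epsilon$--$\delta$ absolute continuity condition of Definition 3.68 generates a signed measure $\nu$ on $\Xb$ by exactly the sequential completion used in the proof of Theorem 3.59: that absolute continuity hypothesis is precisely what forces $\tilde\nu(\widetilde{\MM}_{(Y_{(n)})})$ to be Cauchy, hence convergent in $\Rb$ by Theorem 3.12, whenever $\db(Y_{(m)}, Y_{(n)}) \to 0$ in the pseudometric $\db(Y,Z) = \tilde\mu(\widetilde{\MM}_{(Y)} \Delta \widetilde{\MM}_{(Z)})$ built from $\tilde\mu$. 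This puts $\nu$ on the very same family of nested pairs on which the generated measure $\mu$ lives, and one checks that $\nu$ obeys the signed-measure axioms and is absolutely continuous with respect to $\mu$ in the sense of Definition 3.63, since every $\mu$-null set is squeezed between algebra sets of arbitrarily small $\tilde\mu$-measure, on which $\tilde\nu$ is correspondingly small.

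With $\nu$ in hand the forward direction is immediate: Theorem 3.64, applied to the separable finite measure $\mu$ and the finite signed measure $\nu$, produces a $\mu$-integrable $\fb \colon \Xb \to \Rb$ with $\nu(\Ab) = \int \fb\,\chi_\Ab\,d\mu$ for every measurable $\Ab$, and this $\fb$ is the function assigned to $\tilde\nu$. It is $\mu$-integrable, which already settles $p = 1$. The assignment descends to classes modulo null sets and is injective there, for if two premeasures in $L^1(\Xb)$ yield derivatives $\fb, \gb$ with $\int (\fb - \gb)\,\chi_\Ab\,d\mu = 0$ for every algebra set $\Ab$, then testing this against algebra sets approximating $\{\fb > \gb\}$ and $\{\fb < \gb\}$ forces $\fb = \gb$ almost everywhere. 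For the reverse direction, given a $\mu$-integrable $\fb$ I would set $\tilde\nu(\Ab) = \int \fb\,\chi_\Ab\,d\mu$ on the generating algebra; this is a signed premeasure, and absolute continuity of the integral supplies the required $\epsilon$--$\delta$ condition, so $\tilde\nu \in L^1(\Xb)$. Composing the two directions returns the original data up to a null set, giving the bijection claimed for $p = 1$.

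For $1 < p < \infty$ and for $p = \infty$ the construction of $\fb$ is identical; what remains is to match the extra defining condition on the premeasure with the stated condition on $\fb$. For $L^\infty(\Xb)$ the premeasure inequality $\sum |\tilde\nu(\Ab_i)| \leq K \sum \tilde\mu(\Ab_i)$ over disjoint algebra sets is exactly $\|\fb\|_\infty \leq K$: one direction follows by integrating the characteristic functions, and the converse is obtained by testing the inequality on algebra sets approximating $\{\,|\fb| > K + \epsilon\,\}$ and contradicting the defining inequality. For $1 < p < \infty$ one matches the premeasure condition with $\int |\fb|^p\,d\mu < \infty$ by approximating $\fb$ and $|\fb|^p$ through simple functions built from the countable algebra, transferring between the premeasure data and the integral of $|\fb|^p$ (using Theorem 3.62 to know these integrals are well-defined).

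The Radon--Nikodym theorem itself is already available, so the real content lies not there but in the two bridging checks, and the step I expect to be the main obstacle is verifying that the premeasure-level conditions of Definition 3.68 are genuinely equivalent to the function-level integrability and boundedness conditions --- together with confirming that the objects involved actually exist in CM. Every relevant quantity (a sum over a disjoint algebra family, the approximation of a measurable set by algebra sets, the integral of $|\fb|^p$) is reachable only through a countable approximation, so at each stage one must check that the condition in question satisfies excluded middle, via numerical omniscience applied to the countable algebra and to the sequence of simple-function approximations, before the relevant sets and suprema may legitimately be formed. Given the machinery already established each such check should be routine, but collectively they are where the substance of the correspondence resides, much as in the proof of Lemma 3.11.
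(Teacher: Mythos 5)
Your proposal is correct and takes exactly the route the paper intends: the paper gives no written proof at all, stating only that the proposition is ``an immediate consequence of Theorem 3.64,'' and your argument is a faithful expansion of that remark --- completing the signed premeasure to a signed measure via the sequential construction of Theorem 3.59, invoking the Radon--Nikodym theorem to obtain $\fb$, and matching the premeasure-level side conditions of Definition 3.68 to integrability (resp.\ essential boundedness) of $\fb$, with appropriate attention to the excluded-middle checks CM requires. The bridging steps you single out as the real content (premeasure $\to$ signed measure, and the equivalence of the $L^\infty$ inequality with $\|\fb\|_\infty \leq K$) are precisely the details the paper's ``immediate'' elides, and your sketches of them are sound.
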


In the $\sigma$-finite case we no longer have
$L^p(\Xb) \subseteq L^1(\Xb)$, but we still have $L^p(\Xb)
\subseteq L^1_{loc}(\Xb)$, so can adapt the above result to this case.

Next we discuss duality.

\begin{defi}
Let $\Eb$ be a separable Banach space with countable dense subset
$E$. We may assume that $E$ is a vector space over $Q$ (cf.\ Lemma
3.11). We define the {\it dual Banach space} $\Eb'$ to be the set
of bounded $Q$-linear maps from $E$ to $\Rb$. The norm on $\Eb'$
is defined by $\|f\| = \sup\{|f(x)|: x \in E, \|x\| \leq 1\}$.
\end{defi}

As before, the elements of $\Eb'$ are modelled as sequences of real numbers.

\begin{prop}
The restriction of any bounded linear functional on $\Eb$ to $E$
defines an element of $\Eb'$, and every element of $\Eb'$ is the
restriction of precisely one bounded linear functional on $\Eb$.
\end{prop}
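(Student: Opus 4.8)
The plan is to exhibit the restriction map and the extension map as mutually inverse bijections between the bounded linear functionals on $\Eb$ and the elements of $\Eb'$. The first direction is routine. Given a bounded linear functional $\fb\colon \Eb \to \Rb$, its restriction to the countable set $E$ is the sequence of reals $(\fb(x_n))$, where $(x_n)$ enumerates $E$; this sequence exists because $E$ is countable and the relation $\langle x_n, Y\rangle \in \fb$ is an atomic third order membership, hence decidable. This restriction is $Q$-linear since $\fb$ is linear and $E$ is a $Q$-subspace, and it is bounded with $\|\fb|_E\| \le \|\fb\|$ because $E \subseteq \Eb$; so $\fb|_E \in \Eb'$. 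Uniqueness of an extension is equally quick: a bounded linear functional is Lipschitz, hence continuous, hence preserves convergent sequences (Theorem 3.39 (iv)), and since $E$ is dense every $x \in \Eb$ is a limit of a sequence in $E$; thus two bounded functionals agreeing on $E$ agree everywhere. This gives the ``precisely one'' clause and shows the restriction map is injective.

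The substantive content, and the main obstacle, is the existence of the extension as a genuine third order object in CM; classically it is immediate from uniform continuity, but here we must produce the graph by comprehension. Given $f \in \Eb'$ (a bounded $Q$-linear map $f\colon E \to \Rb$, modelled as a sequence of reals), I would define the extension $\tilde{\fb}$ by the prescription
$$\langle x, y\rangle \in \tilde{\fb} \quad\Longleftrightarrow\quad (\forall n)(\exists m \in E)\big(\|x - m\| < 1/n \,\wedge\, |f(m) - y| < 1/n\big),$$
and argue that this condition satisfies excluded middle so that $\tilde{\fb}$ exists by third order comprehension. For each fixed $n$ and each enumerated point of $E$, the two strict inequalities are comparisons of reals; these are decidable because a real is an honest Dedekind cut with decidable rational membership, so each comparison reduces to a search over $\tilde{Q}$. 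Numerical omniscience (Theorem 2.2) then makes the inner existential over the countable set $E$ decidable, and a second application makes the outer universal over $n$ decidable. This is exactly the ``test $\phi(X)$ in countably many steps, then apply comprehension'' technique used in Lemma 3.11 and in the closure constructions following Definitions 3.16 and 3.35.

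It then remains to verify that $\tilde{\fb}$ is a function extending $f$, with the same norm. That a value $y$ exists for each $x$ follows from density together with Cauchy completeness of $\Rb$ (Theorem 3.14): choosing $(y_k) \subseteq E$ with $y_k \to x$, the estimate $|f(y_k) - f(y_j)| \le \|f\|\,\|y_k - y_j\|$ shows $(f(y_k))$ is Cauchy, so it converges to some $y$, and that $y$ satisfies the defining condition. Uniqueness of $y$, and hence independence of the approximating sequence, follows from boundedness: if $y_k \to x$ and $z_k \to x$ then $|f(y_k) - f(z_k)| \le \|f\|\,\|y_k - z_k\| \to 0$. Linearity of $\tilde{\fb}$ and the norm equality $\|\tilde{\fb}\| = \|f\|$ (the supremum over the unit ball of $\Eb$ equals that over the unit ball of $E$, by density) are then routine, and $\tilde{\fb}|_E = f$ is immediate from the definition. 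Since restriction followed by extension and extension followed by restriction are both the identity, the two maps are mutually inverse, which establishes both clauses of the proposition.
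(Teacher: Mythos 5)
Your proof is correct and is exactly the argument the paper intends: the paper states this proposition without any proof, treating it as routine, and your route --- restriction via decidability of function application, uniqueness by density plus boundedness, and existence of the extension's graph by third order comprehension after nested applications of numerical omniscience (with Cauchy completeness of $\Rb$ supplying the values) --- is precisely the paper's standard toolkit, as used in Lemma 3.11, the closure constructions following Definitions 3.16 and 3.35, and the unproven analogue Proposition 3.67 for $C(\Xb)$. The only glosses (compatibility of the graph with the identity on $\Eb \times \Rb$, and unpacking $\|x-m\|<1/n$ through functionality of the norm as in Proposition 3.4) are below the paper's own level of detail, so nothing further is needed.
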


We can prove a version of the Hahn-Banach theorem:

\begin{theo}
Let $\Eb$ be a separable Banach space, let $\Eb_0$ be a separable
closed subspace, and let $\fb_0: \Eb_0 \to \Rb$ be a bounded
linear functional on $\Eb_0$. Then $\fb_0$ extends to a bounded linear
functional $\fb$ on $\Eb$ with $\|\fb\| = \|\fb_0\|$.
\end{theo}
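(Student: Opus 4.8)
The plan is to imitate the classical separable Hahn--Banach argument, extending $\fb_0$ one coordinate at a time along an enumeration of a countable dense $Q$-subspace and realizing the resulting countable recursion by the dependent choice axiom (8). In the separable setting no appeal to Zorn's lemma is needed, and the one-step extension value can be chosen canonically, so that only axiom (8) --- and not a genuine choice principle --- is required. First I would fix the countable data. Using the countable-generation construction of Lemma 3.11 (see the remark that it applies in general algebraic settings), choose a countable dense $Q$-linear subspace $E_0 \subseteq \Eb_0$ and a countable dense $Q$-linear subspace $E \subseteq \Eb$ with $E_0 \subseteq E$; let $f_0\colon E_0 \to \Rb$ be the restriction of $\fb_0$ and set $c = \|\fb_0\|$, so $|f_0(v)| \le c\|v\|$ on $E_0$. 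Enumerate $E = (e_n)$ and let $V_n$ be the $Q$-span of $E_0 \cup \{e_0,\dots,e_{n-1}\}$, so each $V_n$ is a countable $Q$-subspace, $V_0 = E_0$, and $\bigcup_n V_n = E$.

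The heart of the argument is the one-step extension. Suppose $g\colon V_n \to \Rb$ is $Q$-linear with $\|g\| \le c$. The estimate $g(v)+g(w) \le c(\|v-e_n\|+\|w+e_n\|)$, which follows from $g(v+w) \le c\|v+w\| \le c(\|v-e_n\|+\|w+e_n\|)$, shows that
$$\sup_{v \in V_n}\big(g(v)-c\|v-e_n\|\big) \le \inf_{w \in V_n}\big(c\|w+e_n\|-g(w)\big),$$
and both the supremum and the infimum exist in $\Rb$ by sequential completeness, since $V_n$ is countable and the relevant sets of reals are bounded. I would define the canonical value $\alpha_n = \sup_{v \in V_n}(g(v)-c\|v-e_n\|)$ and extend $g$ to $g'$ on $V_{n+1} = V_n + Q e_n$ by setting $g'(e_n) = \alpha_n$; the displayed inequality, together with the two complementary bounds obtained by evaluating at $v = -w$ and at $v-w$, guarantees $\|g'\| \le c$, and if $e_n$ already lies in $V_n$ the supremum returns the forced value $g(e_n)$, so $g'$ is well-defined in all cases.

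To assemble these extensions I would encode each functional $g$ on the countable space $V_n$ as the sequence of its values on a fixed enumeration of $V_n$, i.e.\ as a single second order object, and let $\phi(n,X,Y)$ assert that if $X$ codes a $Q$-linear extension $g$ of $f_0$ on $V_n$ with $\|g\|\le c$ then $Y$ codes its canonical one-step extension to $V_{n+1}$, and otherwise $Y$ is arbitrary. Since $\alpha_n$ exists, the premise $(\forall n)(\forall X)(\exists Y)\phi(n,X,Y)$ of axiom (8) holds, and dependent choice, started from the second order object coding $f_0$ on $V_0 = E_0$, yields a sequence $(g_n)$ with each $g_{n+1}$ extending $g_n$. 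Reading off the values at each $e_n$ produces, by arithmetical comprehension, a sequence of reals defining a $Q$-linear functional $f$ on $E = \bigcup_n V_n$ with $\|f\| \le c$ that restricts to $f_0$. By Proposition 3.72 (applied to $\Eb$), this element of $\Eb'$ corresponds to a unique bounded linear functional $\fb$ on $\Eb$ with $\|\fb\| \le c$; since $\fb$ agrees with $\fb_0$ on $E_0$, which is dense in $\Eb_0$, and both are continuous, $\fb$ extends $\fb_0$, whence $\|\fb\| \ge \|\fb_0\|$ and therefore $\|\fb\| = \|\fb_0\|$.

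The main obstacle is the verification that this recursion is legitimately expressible in CM: I would need to check that $\phi$ can be written so that the instances actually used satisfy excluded middle (so the coding sequences exist by comprehension), and that the suprema defining the $\alpha_n$ genuinely exist as Dedekind cuts within CM. Both reduce to facts already in hand --- the countability of each $V_n$ together with numerical omniscience for deciding membership and boundedness, and the Cauchy/sequential completeness of $\Rb$ (Theorem 3.13) for the suprema --- so I expect that no principle beyond axiom (8) is needed.
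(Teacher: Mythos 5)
Your proposal is correct and follows essentially the same route as the paper's proof: a classical one-step Hahn--Banach extension whose defining supremum/infimum is taken over countable ($Q$-linear) dense sets so that it exists by the completeness of $\Rb$, followed by the recursion axiom (dependent choice, axiom (8)) to assemble the nested extensions and an amalgamation, with the paper being sketchier about the second order coding of the functionals that you spell out. The only slips are cosmetic: the correspondence you invoke is Proposition 3.71 (not 3.72), and the least-upper-bound property comes from the remark before Theorem 3.10 / Theorem 3.12 (not 3.13).
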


\begin{proof}
Since $\Eb$ is separable, we can enumerate a dense subset $(x_n)$,
and it will suffice to show that $\fb_0$ extends to $\Eb_0 + \Rb\cdot x_1$;
we can then recursively extend to the span of $\Eb_0$ and
$x_1, \ldots, x_n$, use dependent choice to extract a nested
sequence of extensions, and amalgamate them.

The extension to $\Eb_0 + \Rb x_1$ is effected just as in the
classical proof. We need $\Eb_0$ to be separable so that the
classical inequality
$$\sup_{x \in \Eb_0} (-\|\fb_0\| \|x_1 + x\| - \fb_0(x))
\leq \fb(x_1) \leq \inf_{x \in \Eb_0} (\|\fb_0\| \|x_1 + x\| - \fb_0(x))$$
can be restricted to $x$ ranging over a countable dense subset of
$\Eb_0$, in order to ensure that the supremum and infimum exist.
\end{proof}

The same result holds, with the same proof, for extensions from
separable subspaces of nonseparable spaces, but this requires the
well-ordering $\prec$ of CM${}^+$. (See Section 2.3.)

The weak* topology on the dual of a separable Banach space $\Eb$ is defined
in the usual way. Note that its restriction to the unit ball of $\Eb'$ is
second countable, and even metrizable.

\begin{theo}
The closed unit ball of the dual of any separable Banach space is
weak* compact.
\end{theo}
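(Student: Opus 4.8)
The plan is to reduce weak* compactness to sequential compactness, which holds in any topological space by Proposition 3.56, and then to carry out the classical diagonal argument inside CM. Write $\Kb$ for the closed unit ball of $\Eb'$ and let $(x_n)$ enumerate the countable dense subset $E \subseteq \Eb$, which by Definition 3.70 we may take to be a vector space over $Q$. First I would check that $\Kb$ exists as a set: for each $f \in \Eb'$ the norm $\|f\| = \sup\{|f(x)| : x \in E,\ \|x\| \leq 1\}$ is a genuine real number, being the least upper bound of a bounded countable set of reals and so existing by sequential completeness (Theorem 3.10), while the condition $\|f\| \leq 1$ is arithmetical in the cut $\|f\|$ and hence satisfies excluded middle; thus $\Kb = \{f \in \Eb' : \|f\| \leq 1\}$ exists by third order comprehension.

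By the remark preceding the theorem the weak* topology restricted to $\Kb$ is second countable, a basic neighbourhood of $f$ being cut out by finitely many of the evaluations $g \mapsto g(x_n)$. Consequently a sequence $(f_k)$ in $\Kb$ converges weak* to some $f \in \Kb$ precisely when $f_k(x_n) \to f(x_n)$ in $\Rb$ for every $n$. (That pointwise convergence on the dense set $E$ forces pointwise convergence on all of $\Eb$, and hence genuine weak* convergence, follows from the uniform bound $\|f_k\| \leq 1$ by the usual three-$\epsilon$ estimate.) In view of Proposition 3.56 it therefore suffices to prove that every sequence $(f_k)$ in $\Kb$ has a weak*-convergent subsequence.

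For this I would run the diagonal argument. For each fixed $n$ the real sequence $(f_k(x_n))_k$ lies in the closed bounded interval $[-\|x_n\|, \|x_n\|]$ and so has a convergent subsequence by Theorem 3.25. Coding subsequences as strictly increasing functions from $N$ to $N$ (hence as second order objects), I would use dependent choice (axiom (8)) to build a nested sequence of subsequences, the $n$th of which makes $f_k(x_m)$ converge for all $m \leq n$; the diagonal subsequence $(f_{k_j})_j$ then has $f_{k_j}(x_n)$ convergent for every $n$. A further appeal to dependent choice, together with Cauchy completeness, assembles the coordinatewise limits into a single sequence of reals, which I claim represents an element $f$ of $\Kb$: it is $Q$-linear because each $f_{k_j}$ is and the finitely many linear relations governing any instance pass to the limit, and it satisfies $\|f\| \leq 1$ because $|f(x)| = \lim_j |f_{k_j}(x)| \leq \|x\|$ whenever $x \in E$ and $\|x\| \leq 1$. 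Since $f_{k_j}(x_n) \to f(x_n)$ for every $n$, the criterion of the previous paragraph shows that $(f_{k_j})$ converges weak* to $f$.

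The main obstacle is carrying out the diagonalization genuinely within CM rather than taking it on faith: both the extraction of the nested subsequences and the assembly of the limit functional are authentic countable constructions that must be licensed by dependent choice, and one must separately confirm that the resulting $f$ again lies in $\Kb$ (boundedness and $Q$-linearity being preserved under coordinatewise limits) and that coordinatewise convergence really is weak* convergence. This last equivalence is precisely where the second-countable description of the topology on $\Kb$, noted just before the theorem, does its work.
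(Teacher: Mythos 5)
Your proof is correct and follows essentially the same route as the paper's: reduce to sequential compactness via Proposition 3.56, then run the diagonal argument over an enumeration of the countable dense $Q$-linear subspace $E$, noting that pointwise convergence on $E$ plus the uniform norm bound gives weak* convergence. The paper's version is only a sketch; your additional details (existence of the ball by comprehension, dependent choice licensing the nested subsequence extraction, and verification that the coordinatewise limit is again a $Q$-linear functional of norm at most $1$) are exactly the steps it leaves implicit.
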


\begin{proof}
We verify sequential compactness. This is enough by Proposition 3.56.
To do this let $E$ be a countable dense $Q$-linear subspace of a
separable Banach space $\Eb$ and let $(f_n)$ be a sequence of
bounded linear functionals on $E$, each of norm at most 1. Enumerating
$E$ as $(x_n)$, we then successively extract subsequences of $(f_n)$
which converge on $x_1, \ldots, x_k$. Diagonalizing yields a subsequence
$(f_{n_k})$ such that the sequence $(f_{n_k}(x_i))$ converges for every $i$.
Thus every sequence has a weak* convergent subsequence. (It suffices to verify
convergence on a dense set in $\Eb$ since the sequence $(f_n)$ is bounded.)
\end{proof}

We close with a version of Goldstine's theorem. This is interesting because
it is a basic theorem about the second dual, yet in general second duals,
even of separable Banach spaces, cannot be constructed in CM. Separability
of $\Eb$ does not imply separability of $\Eb'$, but we need $\Eb'$ to be
separable in order to construct $\Eb''$.

Our inability to form second duals might appear
to reveal a serious limitation in our ability to formalize
standard functional analysis within CM. But the limitation is not severe
because typical applications of $\Eb''$ do not involve its Banach space
structure. Rather, they have to do with the behavior of individual elements
of $\Eb''$, which are not excluded from CM. (Though as we mentioned just
above, if $\Eb'$ is nonseparable we would need to work in CM${}^+$ to prove
the existence of elements of $\Eb'' - \Eb$.)

Goldstine's theorem is a good illustration of this phenomenon. Its classical
statement is that the unit ball of $\Eb$ is weak* dense in the unit ball of
$\Eb''$. But this really comes down to an assertion about weak* approximability
of individual elements of $\Eb''$ by elements of $\Eb$. That version of the
result can be stated and proven in CM.

\begin{theo}
Let $\Eb$ be a separable Banach space and let $\phi: \Eb' \to \Rb$ be a bounded
linear functional of norm at most 1. Then for any $\fb_1, \ldots, \fb_n$ in
$\Eb'$ and any $\epsilon > 0$ there exists $x$ in the unit ball of $\Eb$
such that
$$|\phi(\fb_i) - \fb_i(x)| < \epsilon$$
for $1 \leq i \leq n$.
\end{theo}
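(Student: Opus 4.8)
The plan is to reduce the statement to a separation fact in the finite-dimensional space $\Rb^n$, thereby sidestepping the weak* Hahn--Banach separation theorem on the (possibly nonseparable) space $\Eb'$, which is not available to us. Let $E$ be a countable dense $Q$-linear subspace of $\Eb$ as in Definition 3.72, and consider the linear map sending $x \in E$ to the tuple $T(x) = \langle \fb_1(x), \ldots, \fb_n(x)\rangle \in \Rb^n$. Put $a = \langle \phi(\fb_1), \ldots, \phi(\fb_n)\rangle$. Everything comes down to showing that $a$ lies in the closure $\Kb$ of the countable set $S = \{T(x): x \in E,\ \|x\| \le 1\}$, since $a \in \Kb$ immediately supplies, for the given $\epsilon$, an $x$ in the unit ball of $E$ with $\|T(x) - a\| < \epsilon$, and hence with $|\phi(\fb_i) - \fb_i(x)| < \epsilon$ for every $i$.

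First I would check that $\Kb$ is a legitimate object of CM with the right geometry. Regarding $\Rb^n$ as a separable complete metric space (with $Q^n$ dense), the set $S$ is countable and bounded, the bound being $\|T(x)\| \le C$ for $x$ in the unit ball, where $C$ depends only on $\|\fb_1\|,\ldots,\|\fb_n\|$. Hence its closure $\Kb$ exists by the closure-formation machinery of Section 3.5 and is compact by Theorem 3.36. Since $S$ is the $T$-image of the $Q$-convex unit ball of $E$, it is closed under rational convex combinations, and a routine density argument (using that $Q$ is dense in $\Rb$) shows that $\Kb$ is genuinely convex. Note also that $\Kb$ is separable, being the closure of the countable set $S$.

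The heart of the matter is to prove $a \in \Kb$ \emph{constructively}, via the nearest-point projection rather than a proof by contradiction, so as to remain inside the permitted logic. Because $\Kb$ is separable and compact and $y \mapsto \|y - a\|$ is continuous, this function attains its minimum on $\Kb$ (by the metric-space form of Theorem 3.26); let $p$ be a minimizer. Comparing $\|p - a\|^2$ with $\|p + t(y-p) - a\|^2$ for $y \in \Kb$ and $t \in (0,1]$, then letting $t \to 0^+$, yields the variational inequality $\langle a - p, y - p\rangle \le 0$ for every $y \in \Kb$. Writing $c = a - p$, this gives $\sup_{y \in \Kb}\langle c, y\rangle \le \langle c, p\rangle$. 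On the other hand, setting $g = \sum_i c_i \fb_i \in \Eb'$ (a real-linear combination of bounded $Q$-linear maps is again one) we have $\langle c, T(x)\rangle = g(x)$, so $\sup_{y \in \Kb}\langle c, y\rangle = \sup\{g(x): x \in E,\ \|x\| \le 1\} = \|g\|$ by the definition of the norm on $\Eb'$ and the symmetry of the unit ball, while $\langle c, a\rangle = \sum_i c_i \phi(\fb_i) = \phi(g)$ by linearity of $\phi$. Combining these with $\|\phi\| \le 1$ gives $\langle c, a\rangle = \phi(g) \le \|g\| = \sup_{y\in\Kb}\langle c, y\rangle \le \langle c, p\rangle$, so that $\|a - p\|^2 = \langle c, a - p\rangle \le 0$, forcing $a = p \in \Kb$, as required.

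The main obstacle is logical rather than computational. The classical proof separates $a$ from $\Kb$ under the assumption $a \notin \Kb$, but ``$a \in \Kb$'' is a statement about membership in a non-surveyable structure and so cannot be assumed to satisfy excluded middle. The device that rescues the argument is precisely to read off the separating functional $c$ from a nearest point $p \in \Kb$, whose existence is furnished by compactness; this converts the nonconstructive separation step into the honest inequality $\|a - p\|^2 \le 0$. The two points demanding care are therefore (i) that $\Kb$ genuinely exists and is compact and convex in $\Rb^n$, which rests on forming the closure of a countable bounded set and on the compactness results of Section 3.5, and (ii) that the distance function actually attains its minimum, for which the separability of $\Kb$ (as the closure of a countable set) is exactly the hypothesis the extremum results demand.
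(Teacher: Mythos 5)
Your proof is correct and shares the paper's central reduction --- restricting to a countable dense $Q$-linear subspace $E$, mapping its unit ball into $\Rb^n$ via $T(x) = \langle \fb_1(x),\ldots,\fb_n(x)\rangle$, and comparing the point $a = \langle\phi(\fb_1),\ldots,\phi(\fb_n)\rangle$ with the closure $\Kb$ of $T([E]_1)$ --- but it diverges at the decisive step. The paper argues by contradiction: since $E$ is countable, numerical omniscience gives excluded middle for the assertion that some $x$ in the unit ball of $E$ works, so one may assume it fails, invoke a separation theorem for separable convex subsets of $\Rb^n$ (cited as elementary but not proved there), and contradict $\|\phi\|\le 1$. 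You instead prove $a\in\Kb$ directly: take a nearest point $p\in\Kb$ to $a$, derive the variational inequality $\langle a-p,\,y-p\rangle\le 0$ from convexity, and with $c = a-p$, $g=\sum_i c_i\fb_i$ conclude from $\phi(g)\le\|g\|\le\langle c,p\rangle$ that $\|a-p\|^2\le 0$, hence $a=p\in\Kb$. In effect you have inlined the standard nearest-point proof of the finite-dimensional separation theorem and run it forwards rather than by contradiction; what this buys is a self-contained argument that does not lean on an unproved separation lemma, at the cost of needing $\Kb$ compact and convex and an extreme value theorem for separable compact metric spaces --- which the paper never states explicitly, but which follows from Proposition 3.40(b) together with Theorem 3.25, or by repeating the proof of Theorem 3.29 using Theorem 3.36(iii); you should say which route you take, since your citation ``Theorem 3.26'' actually points at the definition of continuity. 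One genuine caveat: your stated motivation --- that ``$a\in\Kb$'' is membership in a non-surveyable structure and so cannot be assumed to satisfy excluded middle --- is wrong for CM. $\Kb$ is the closure of a \emph{countable} set, and membership in such closures satisfies excluded middle by exactly the double numerical-omniscience argument following Definition 3.35; that is precisely what licenses the paper's proof by contradiction. So your constructive detour is optional rather than forced, though it is valid and, if anything, proves slightly more. Also note the dual space is Definition 3.70, not 3.72 (the latter is the Hahn--Banach theorem).
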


\begin{proof}
In the statement of the theorem we have identified the linear functionals
$\fb_i$ on $\Eb$ with their representatives $f_i$ in $\Eb'$. That is,
$f_i$ is the restriction of $\fb_i$ to a countable dense $Q$-linear subspace
$E$ of $\Eb$. Now fix $\fb_1, \ldots, \fb_n$ and $\epsilon$; we claim that
there exists $x \in E$ with the desired properties. Since $E$ is countable
this assertion satisfies excluded middle, so we can prove it by contradiction.

Thus, suppose no $x \in E$ satisfies $\|x\| \leq 1$ and
$|\phi(\fb_i) - f_i(x)| < \epsilon$ for all $i$. Consider the map $T: E \to
\Rb^n$ defined by $T(x) = (f_1(x), \ldots, f_n(x))$. Then the closure
$K = \overline{T([E]_1)}$ is a convex subset of $\Rb^n$ (here $[E]_1$
denotes the unit ball of $E$) and it is separated from the point
$\alpha = (\phi(\fb_1), \ldots, \phi(\fb_n))$ by a distance of at least $\epsilon$.
So by a separation theorem for separable convex subsets of $\Rb^n$, which
has easy elementary proofs, we can find a linear map $\gb: \Rb^n \to \Rb$
such that $\gb(\beta) \leq 1 < \gb(\alpha)$ for all
$\beta \in K$. Finally, the map $\gb \circ T$ belongs to the unit ball of
$\Eb'$ but we have $\phi(\gb \circ T) = \gb(\alpha) > 1$ by linearity, which
contradicts the assumption that $\phi$ has norm at most 1. This shows that
the desired $x$ does exist.
\end{proof}

\bigskip
\bigskip

\end{document}